\definecolor{darkblue}{rgb}{0,0,0.4} 
\providecommand\@dotsep{5}
\def\listtodoname{List of Todos}
\def\listoftodos{\@starttoc{tdo}\listtodoname}
\tikzstyle{crossing}=[circle,fill=white,minimum height=6pt,inner sep=0pt, outer sep=0pt, style={transform shape=false}]
\numberwithin{equation}{section}
\theoremstyle{plain}
\newtheorem{thm}[equation]{Theorem}
\newtheorem*{thm*}{Theorem}
\newtheorem{prop}[equation]{Proposition}
\newtheorem{cor}[equation]{Corollary}
\newtheorem{lem}[equation]{Lemma}
\theoremstyle{definition}
\newtheorem{rmk}[equation]{Remark}
\newtheorem{defn}[equation]{Definition}
\newcommand{\f}{\mathbb{F}}
\newcommand{\forgot}{\mathcal{F}}%forgetful functor
\newcommand{\subinclude}{\mathcal{I}}%subcategory inclusion
\newcommand{\quotient}{\mathcal{Q}}%quotient functor
\newcommand{\hoco}{\mathrm{hocolim}}\newcommand{\hocolim}{\hoco} %homotopy colim
\newcommand{\tH}{{\widetilde{H}}}
\newcommand{\X}{\mathcal{X}}
\newcommand{\Id}{\mathrm{Id}}
\newcommand{\topp}{\mathrm{Top}_*}
\newcommand{\Hom}{\mathrm{Hom}} %homomorphisms
\newcommand{\from}{\colon}%for functions f\from X\to Y
\newcommand{\into}{\hookrightarrow}
\newcommand{\Kh}{\mathit{Kh}} %Khovanov homology
\newcommand{\kho}{\Kh_o} %the odd khovanov homology
\newcommand{\kor}{\widetilde{\Kh}_o} %reduced odd kh homology
\newcommand{\unih}{\Kh_u} %unified khovanov homology
\newcommand{\KhGen}{\mathit{Kg}}%Khovanov generators
\newcommand{\KhCx}{\mathit{Kc}}%Khovanov chain complex
\newcommand{\oddKhCx}{\KhCx_o}%odd Khovanov chain complex
\newcommand{\unic}{\KhCx_u}\newcommand{\uniKhCx}{\unic} %unified Khovanov chain complex
\newcommand{\khoh}{\X_e}\newcommand{\Khspace}{\khoh} %the Khovanov homotopy type
\newcommand{\khoo}{\X_o}\newcommand{\oddKhspace}{\khoo} %the odd khovanov homotopy type
\newcommand{\khor}{\widetilde{\X}_o} %reduced odd kh homotopy
\newcommand{\unis}{\X_u} %unified khovanov homotopy type
\newcommand{\reduniKhspace}{\widetilde{\X}_u} %the reduced unified Khovanov spectrum.
\newcommand{\burn}{\mathscr{B}} %burn cat
\newcommand{\oddb}{\burn_{\sigma}} %signed burn cat
\newcommand{\eqb}{\burn_\xi} %equivariant Burnside category
\newcommand{\oddeq}{\mathcal{D}} %functor from the odd to equivariant burnside categories % D for doubling
\newcommand{\two}{\underline{2}} %the cube cat
\newcommand{\Tot}{\mathrm{Tot}} %A functor, taking a burnside functor to a functor of abelian groups.
\newcommand{\asig}{\mathcal{A}^\sigma} %The product steenrod algebra over signs.
\newcommand{\refl}{\mathfrak{r}}
\newcommand{\dig}{\mathfrak{d}} %For the diagonal action.
\newcommand{\ZZ}{\mathbb{Z}}
\renewcommand{\th}{^{\text{th}}}
\newcommand{\Map}{\mathrm{Map}}
\newcommand{\sym}{\mathrm{sym}}
\newcommand{\dsym}{{2\mathrm{sym}}} %symmetric with respect to the diagonal action
\newcommand{\Cat}{\mathscr{C}}%category
\newcommand{\cellC}{C_{\mathrm{cell}}}%cellular chain complex
\newcommand{\redcellC}{\widetilde{C}_{\mathrm{cell}}}%reduced cellular chain complex
\newcommand{\AbFunc}{\mathfrak{F}}%Functors from cube to abelian categories Z or Z_u
\newcommand{\Abelianize}{\mathcal{A}}%Abelianization functor on Burnside categories
\newcommand{\diagram}{L} 
\newcommand{\RR}{\mathbb{R}}
\newcommand{\SteenrodAlg}{\mathcal{A}_2}%mod 2 steenrod algebra
\newcommand{\Filt}{\mathcal{F}}%filtration
\newcommand{\cell}{\mathcal{C}} %cells in the coarse description
\newcommand{\CRealize}[1]{\|#1\|}%The CW complex realization (from k dim boxes)
\newcommand{\Realize}[1]{|#1|}%The CW spectrum realization (by
\newcommand{\op}{\mathrm{op}}
\newcommand{\Mod}{\text{-Mod}}
\newcommand{\bdy}{\partial}%boundary
\renewcommand{\emptyset}{\varnothing}
\newcommand{\basis}[1]{\langle #1\rangle}
\title{An odd Khovanov homotopy type}
\author{Sucharit Sarkar}
\email{sucharit@math.ucla.edu}
\address{Department of Mathematics, University of California, Los Angeles, CA 90095}
\thanks{SS was supported by NSF Grant DMS-1643401}
\author{Christopher Scaduto}
\email{cscaduto@scgp.stonybrook.edu}
\address{Simons Center for Geometry and Physics, Stony Brook, NY 11794}
\thanks{CS was supported by NSF Grant DMS-1503100}
\author{Matthew Stoffregen}
\email{mstoff@mit.edu}
\address {Department of Mathematics, Massachusetts Institute of Technology, Cambridge, MA 02142}
\thanks{MS was supported by NSF Grant DMS-1702532.}
\begin{document}

\begin{abstract}
  For each link $L\subset S^3$ and every quantum grading $j$, we
  construct a stable homotopy type $\khoo^j(L)$ whose cohomology
  recovers Ozsv\'ath-Rasmussen-Szab\'o's odd Khovanov homology,
  $\tH^i(\khoo^j(L))=\kho^{i,j}(L)$, following a construction of
  Lawson-Lipshitz-Sarkar of the even Khovanov stable homotopy
  type. Furthermore, the odd Khovanov homotopy type carries a $\ZZ/2$
  action whose fixed point set is a desuspension of the even Khovanov
  homotopy type. We also construct a $\ZZ/2$ action on an even
  Khovanov homotopy type, with fixed point set a desuspension of
  $\khoo^j(L)$.
\end{abstract} 
\maketitle

\tableofcontents

\section{Introduction} \label{sec:intro}
\subsection{Khovanov homologies}
In \cite{kho1} Khovanov categorified the Jones polynomial: to a link
diagram $L$, he associated a bigraded chain complex, whose graded
Euler characteristic is (a certain normalization of) the Jones
polynomial of $L$, and whose (graded) chain homotopy type is an
invariant of the underlying link.  Several generalizations were soon
constructed, such as invariants for tangles
\cite{khovtanglefunctor,natantangle}, various perturbations
\cite{lee,natantangle}, versions for other polynomials \cite{kr1,kr2},
and many others. The categorified invariant carried structure that was
not visible at the decategorified level. To wit, to a link cobordism
in $\RR^3\times[0,1]$, there is an associated map of Khovanov chain
complexes \cite{jacobsson,khinvartangle,natantangle,cmw-disoriented};
and this map, along with Lee's perturbation, was used by Rasmussen in
\cite{rasmus-s} to define a numerical concordance invariant $s$ and to
give a combinatorial proof of a theorem due to Kronheimer and Mrowka
\cite{km1} on the four-ball genus of torus knots (popularly known as
the Milnor conjecture). Khovanov homology itself turns out to be a
more powerful invariant than the Jones polynomial. Indeed, Khovanov
homology is known to detect the unknot \cite{km2}, while the
corresponding question for the Jones polynomial remains wide open.

In \cite{os-double}, Ozsv\'ath and Szab\'o constructed the first
relation between Khovanov homology and Floer-theoretic
invariants---Heegaard Floer homology \cite{os-main} to be
specific---in the form of a spectral sequence from reduced Khovanov
homology of a link to the Heegaard Floer homology of its branched
double cover. The spectral sequence was originally constructed over
$\ZZ/2$, but it was soon realized that its integral lift does not
start from the usual Khovanov homology, but rather from a different
homology theory which has the same $\ZZ/2$ reduction; this new chain
complex was constructed by Ozsv\'ath-Rasmussen-Szabo~\cite{ors} and is
usually called the odd Khovanov complex, and this is the version that
seems closely related to Heegaard Floer type invariants. (The even
theory was later discovered to be related to certain other Floer
theories, such as instanton Floer homology \cite{km2} and symplectic
Khovanov homology \cite{seidel-smith}.)  The two versions were
combined by a pullback into a single unified theory by Putyra
\cite{putyra}, cf.~\cite{putyrashumakovitch}: the unified Khovanov
complex is a chain complex over $\ZZ[\xi]/(1-\xi^2)$ which recovers
the even (respectively, odd) Khovanov chain complex upon setting
$\xi=1$ (respectively, $\xi=-1$).

In this paper, we will typically decorate the objects from the even
theory by the subscript $e$, the ones from the odd theory by the
subscript $o$, and the ones from the unified theory by the subscript
$u$. In particular, we will denote the even, odd, and the unified
Khovanov complexes as $\KhCx_e(L)$, $\KhCx_o(L)$, and $\KhCx_u(L)$,
respectively.

\subsection{Khovanov homotopy types} 

In \cite{lshomotopytype}, Lipshitz and Sarkar associated to a link
diagram $L$ a finite CW spectrum $\khoh(L)$, whose reduced cellular
cochain complex is isomorphic to the Khovanov complex $\KhCx_e(L)$,
taking the (non-basepoint) cells of $\khoh(L)$ to the standard
generators of $\KhCx_e(L)$. The (stable) homotopy type of $\khoh(L)$
is an invariant of the underlying link; specifically, Reidemeister
moves from the diagram $L$ to a diagram $L'$ induce stable homotopy
equivalences $\khoh(L) \to \khoh(L')$. A different construction of an
even Khovanov homotopy type was constructed independently by
Hu-Kriz-Kriz~\cite{hkk}, and the two versions were later shown to be
equivalent~\cite{lls1}. A stable homotopy refinement of Khovanov
homology endowed it with extra structure, such as an action by the
Steenrod algebra~\cite{lssteenrod}, which was then used to construct a
family of additional $s$-type concordance
invariants~\cite{lsrasmussen}, as well as to show that Khovanov
homotopy type is a strictly stronger invariant than Khovanov
homology~\cite{Seed-Kh-square}.

One could ask for a spectrum invariant $\khoo(L)$ satisfying analogous
properties, but with Khovanov homology replaced with odd Khovanov
homology.  The original Lipshitz-Sarkar contruction using the
Cohen-Jones-Segal framed flow categories machine from \cite{cjs} does
not seem to admit an easy generalization: on account of the signs that
appear in the definition of odd Khovanov homology, there is no framed
flow category for the odd theory covering the framed cube flow
category.  However, in \cite{lls1}, Lawson-Lipshitz-Sarkar provided
several more abstract constructions of $\khoh(L)$---similar to the one
from~\cite{hkk}---in order to understand the behavior of the Khovanov
spectrum under disjoint union and connected sum. In this paper, we
will give a slight generalization of their machinery to construct a
finite $\ZZ_2$-equivariant CW spectrum $\khoo(L)=\bigvee_j\khoo^j(L)$
for each oriented link diagram $L$ (Definition~\ref{def:oddkhmain}).

\begin{thm}\label{thm:oddkhmain}
  The (stable) homotopy type of the odd Khovanov spectrum
  $\khoo(L)=\bigvee_j\khoo^j(L)$ from
  Definition~\ref{def:oddkhmain} is independent of the choices in
  its construction and is an invariant of the isotopy class of the
  link corresponding to $L$. Its reduced cellular cochain complex
  agrees with the odd Khovanov complex $\oddKhCx(L)$,
  \[
  \redcellC^i(\khoo^j(L))=\oddKhCx^{i,j}(L),
  \]
  with the cells mapping to the distinguished generators of $\oddKhCx(L)$.  
\end{thm}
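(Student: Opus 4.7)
The plan is to split the theorem into three claims and attack them in the order: (i) identification of the cellular cochain complex, (ii) independence from the auxiliary data used in Definition~\ref{def:oddkhmain}, and (iii) invariance under Reidemeister moves. These match the three assertions of the theorem, and the work gets progressively harder.

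For step (i), I would argue essentially by unwinding the construction. The spectrum $\khoo^j(L)$ is built by feeding a diagram $F^j$ valued in an equivariant Burnside-type category $\oddeq$ on the resolution cube $\two^n$ into a spatial realization functor. At each vertex of the cube, $F^j$ records the set of standard Khovanov generators in quantum grading $j$, and at each edge it records a correspondence whose underlying abelianization is the signed odd Khovanov edge map. Thus the non-basepoint cells of $\khoo^j(L)$ are canonically identified with the generators of $\oddKhCx^{*,j}(L)$, and by the general cellular-cochain formula for realizations of Burnside-type diagrams (as developed in \cite{lls1}), the differentials recover those of $\oddKhCx(L)$. The equality $\redcellC^i(\khoo^j(L)) = \oddKhCx^{i,j}(L)$ is therefore essentially tautological.

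For step (ii), the construction depends on a choice of sign cocycle (edge assignment) $\epsilon$ on the cube, a choice of signed lifts of correspondences to $\oddeq$, and possibly a choice of orientations on the resolutions. Two cocycles differ by a coboundary, and two systems of signed lifts compatible with the same cocycle differ by a 0-chain of signs; either difference yields an explicit natural equivalence of diagrams in $\oddeq$. The realization functor sends natural equivalences of diagrams to stable equivalences of spectra, again following the abstract formal setup of \cite{lls1}, giving independence from all of these auxiliary choices.

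Step (iii), Reidemeister invariance, is where the main work lies. For each Reidemeister move $L \to L'$, the strategy is to enlarge both resolution cubes to a common cube, build a single diagram on the enlargement whose realization contains $\khoo(L)$ and $\khoo(L')$ as subcomplexes, and exhibit the quotients as contractible. The input at the chain level is the classical Ozsv\'ath-Rasmussen-Szab\'o odd Reidemeister homotopy equivalence; the technical task is to lift these chain-level homotopy equivalences to morphisms in $\oddeq$, producing genuine zigzags of natural equivalences of Burnside-type diagrams rather than mere chain equivalences. I expect this signed/equivariant lifting to be the main obstacle: unlike the even case, the lifts must be coherent with the nontrivial $\ZZ/2$-action from the sign involution $\xi \mapsto -\xi$, and verifying the relevant sign cocycle identities for Reidemeister~III (and for the various configurations of Reidemeister~II) is the most delicate bookkeeping. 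Once the lifts are in place, the realization functor converts them into stable equivalences $\khoo(L)\simeq\khoo(L')$, completing the proof of invariance and hence of the theorem.
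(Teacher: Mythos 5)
Your three-part decomposition matches the paper's strategy at a high level, and steps (i) and (ii) are essentially correct (step (i) is Proposition~\ref{prop:totalization} plus the observation after Definition~\ref{def:kh-odd-burnside-functor}; step (ii) is the ``sign reassignment'' analysis in the proof of Theorem~\ref{thm:odd-functor-invariant}). One small notational confusion worth correcting: the target of the Khovanov Burnside functor is the \emph{signed} Burnside category $\oddb$, not an ``equivariant'' one; $\oddeq$ in the paper is the doubling \emph{functor} $\oddb\to\eqb$, and the $\ZZ_2$-action on $\khoo(L)$ arises from the reflection-equivariance of the signed box maps in the realization, not from working in $\eqb$.

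The genuine gap is in step (iii), at Reidemeister~III. Your plan is to take the Ozsv\'ath--Rasmussen--Szab\'o chain homotopy equivalences for Reidemeister moves and ``lift'' them to morphisms of Burnside functors, with RIII merely being the worst of the sign bookkeeping. This will not work: the RIII chain maps in \cite{kho1,natancat,ors} do not send Khovanov generators to Khovanov generators (they involve nontrivial linear combinations), so they do not refine to $1$-morphisms in a Burnside category at all, and there is no correspondence to lift. This is not a sign issue but a structural one. The paper sidesteps it by following \cite{lshomotopytype}: instead of lifting the classical RIII equivalence, one works only with the \emph{braid-like} RIII move and produces a zig-zag of subfunctor inclusions and quotient functor projections (Gaussian cancellations of types~\ref{item:merge-cancel} and \ref{item:split-cancel}), each of which sends generators to generators by construction and hence does come from a Burnside natural transformation. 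You would need to replace your ``lift the ORS equivalence'' step with this cancellation-based argument to make the proof go through.
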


We also construct a reduced theory: a finite $\ZZ_2$-equivariant CW
spectrum $\khor(L,p)=\bigvee_j\khor^j(L,p)$ for each oriented link
diagram $L$ with basepoint $p$ (Definition~\ref{def:redkh}).
\begin{thm}\label{thm:redkh}
  The (stable) homotopy type of the reduced odd Khovanov spectrum
  $\khor(L,p)=\bigvee_j \khor^j(L,p)$ from
  Definition~\ref{def:redkh} is independent of the choices in its
  construction and is an invariant of the isotopy class of the pointed
  link corresponding to $(L,p)$.  Its reduced cellular cochain complex
  agrees with the reduced odd Khovanov complex
  $\widetilde{\KhCx}_o(L)$,
  \[
  \redcellC^i(\khor^j(L,p))= \widetilde{\KhCx}_o^{i,j}(L),
  \]
  with the cells mapping to the distinguished generators of
  $\widetilde{\KhCx}_o(L)$.  There is a cofibration sequence 
  \[\khor^{j-1}(L,p)
  \to \khoo^j(L)\to\khor^{j+1}(L,p).\]
\end{thm}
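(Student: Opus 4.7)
The plan is to carry out the proof in close parallel with Theorem~\ref{thm:oddkhmain}, and to derive the cofibration sequence from a short exact sequence of chain complexes lifted to the level of Burnside-valued functors. I would proceed in three stages.

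First, I would build $\khor^j(L,p)$ and identify its cellular cochain complex. The distinguished generators of $\widetilde{\KhCx}_o(L)$ are precisely those generators of $\KhCx_o(L)$ whose label at the circle through $p$ is fixed (say to $x_-$); at each vertex of the resolution cube they cut out a subset of the unreduced generators that is preserved by the cube maps (up to the sign adjustments characteristic of the odd theory), and together they assemble into a sub-functor of the Burnside-valued functor underlying $\khoo^j(L)$ from Definition~\ref{def:oddkhmain}. The spectrum $\khor^j(L,p)$ is defined (Definition~\ref{def:redkh}) as the $\ZZ_2$-equivariant spatial realization of this sub-functor, so the identification $\redcellC^i(\khor^j(L,p))=\widetilde{\KhCx}_o^{i,j}(L)$ is automatic from the construction.

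Second, invariance under the auxiliary choices in Definition~\ref{def:redkh} and under Reidemeister moves follows verbatim from the argument for Theorem~\ref{thm:oddkhmain}. Any Reidemeister move from $(L,p)$ to $(L',p)$ can be arranged in a ball disjoint from $p$, so the stable equivalences of Burnside functors that produce the equivalence $\khoo(L)\simeq \khoo(L')$ restrict to stable equivalences of the reduced sub-functors, and descend to the required $\ZZ_2$-equivariant equivalence $\khor^j(L,p)\simeq \khor^j(L',p)$. The same restriction argument applied to the coherence data provided in the unreduced invariance proof gives independence of the construction from its internal choices.

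Third, for the cofibration sequence, I would lift the short exact sequence of chain complexes
\[
0 \to \widetilde{\KhCx}_o^{*,j-1}(L)\{1\} \to \KhCx_o^{*,j}(L) \to \widetilde{\KhCx}_o^{*,j+1}(L)\{-1\} \to 0
\]
coming from the splitting of generators by their label at the basepoint strand: the sub-functor of generators labeled $x_-$ at the basepoint realizes the left-hand complex, while the quotient functor of generators labeled $x_+$ realizes the right-hand one. Spatial realization then yields the asserted cofibration sequence $\khor^{j-1}(L,p)\to \khoo^j(L)\to \khor^{j+1}(L,p)$. The main obstacle is precisely this last step: one must verify that the sub/quotient decomposition of Burnside-valued functors actually produces a cofibration of (equivariant) spectra, and that the quotient spectrum coincides on the nose, not merely at the chain level, with the realization of the quotient sub-functor (after the appropriate grading shift). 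This should follow from the cellular and cofibrancy properties of the spatial realization formalism of \cite{lls1}, suitably adapted to the $\ZZ_2$-equivariant odd setup developed earlier in the paper.
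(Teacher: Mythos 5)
Your proof follows essentially the paper's own route: realize the reduced spectrum from a sub/quotient functor of $F^j_o$, inherit isotopy invariance from the unreduced case by restriction, and obtain the cofibration sequence from the cofibration sequence of Burnside functors
\[
\widetilde{F}^{j-1}_{o,+}(L,p)\to F^j_o(L)\to\widetilde{F}^{j+1}_{o,-}(L,p)
\]
together with the fact that realization carries such sequences to cofibrations of spectra. The latter is exactly Lemma~\ref{lem:cofib}, which resolves what you flag at the end as ``the main obstacle''; the cochain identification is Proposition~\ref{prop:totalization}, as in Theorem~\ref{thm:oddkhmain}. So your structure and the paper's agree.

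The one place where your argument is thinner than it should be is step two. You assert that any Reidemeister move between pointed diagrams may be arranged in a ball disjoint from $p$, but that is not automatic: a given sequence of Reidemeister moves relating $L$ to $L'$ may well push arcs through the basepoint. What is actually needed is that any two diagrams of isotopic pointed links are related by Reidemeister moves not crossing the basepoint together with isotopies in $S^2$; this is a separate (if standard) fact, which the paper cites rather than rederives. Without invoking that result, your restriction argument does not cover all isotopies of pointed links. The remaining discrepancies (which of the two factors is the sub-functor versus the quotient, and the cosmetic $\{\pm1\}$ shifts in your short exact sequence) are harmless, since the two reduced functors $\widetilde{F}_{o,+}$ and $\widetilde{F}_{o,-}$ are canonically identified.
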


We introduce concordance invariants built from this construction, in
analogy with \cite{lsrasmussen}.  To do so, we show that associated to
a cobordism of links, there exists a map of odd Khovanov spectra (we
do not attempt to show that the map is well-defined); the map induces
a map on the odd Khovanov chain complex, and reduces mod-2 to the
usual cobordism map on $\KhCx(L;\ZZ_2)$. Therefore:
\begin{thm}\label{thm:cobordism-maps}
  The Khovanov cobordism map $\Kh(L;\ZZ_2)\to\Kh(L';\ZZ_2)$ associated
  to a link cobordism $L\to L'$ from
  \cite{jacobsson,khinvartangle,natantangle} is a map of
  $\asig$-modules, where $\asig$ is the free product of two copies of
  the mod-2 Steenrod algebra, and the first (respectively, second)
  copy acts on the mod-2 Khovanov homology by viewing it as the mod-2
  cohomology of the even (respectively, odd) Khovanov homotopy type.
\end{thm}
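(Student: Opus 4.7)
The plan is to lift the Khovanov cobordism map to the spectrum level for \emph{both} the even and the odd Khovanov homotopy types, and then invoke naturality of Steenrod operations. Any link cobordism $L\to L'$ in $\RR^3\times[0,1]$ can be decomposed into a sequence of Reidemeister moves and Morse-theoretic moves (births, deaths, and saddles); it suffices to produce spectrum-level maps for each elementary piece and to compose.

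First, I would construct a spectrum-level cobordism map $\khoo(L)\to \khoo(L')$ on the odd Khovanov side. For each Reidemeister move the required stable map is provided by Theorem~\ref{thm:oddkhmain}, which already yields stable equivalences between the odd spectra of Reidemeister-related diagrams. For a Morse move, I would realize the corresponding chain-level odd Khovanov cobordism map as a morphism of the Burnside-functor diagram (equivalently, diagram of spectra) out of which $\khoo$ is assembled, mirroring the Lipshitz--Sarkar construction in the even case but carried out inside the odd framework built in this paper. By construction, applying the cellular cochain functor $\redcellC^*$ recovers the odd Khovanov cobordism map on $\oddKhCx$, which in turn reduces modulo $2$ to the classical Khovanov cobordism map on $\KhCx(L;\ZZ_2)$, since the odd and even complexes agree mod $2$ and the two cobordism-map formulas differ only by signs. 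The analogous spectrum-level cobordism map $\khoh(L)\to\khoh(L')$ on the even side is already available in the Lipshitz--Sarkar/Lawson--Lipshitz--Sarkar framework and has the same mod-$2$ reduction.

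Second, I would apply the naturality of Steenrod operations with respect to maps of spectra. Naturality applied to the odd-side cobordism map shows that the induced map on mod-$2$ cohomology commutes with the Steenrod operations inherited from $\khoo$; naturality applied to the even-side cobordism map shows the same mod-$2$ map commutes with the Steenrod operations inherited from $\khoh$. Since $\asig$ is by definition the free product of these two copies of the mod-$2$ Steenrod algebra, a $\ZZ/2$-linear map commuting with both sets of generators is automatically a map of $\asig$-modules, which finishes the argument.

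The principal obstacle is the construction of the spectrum-level Morse-move maps on the odd side. The even case is already delicate, and the additional sign conventions of the odd theory must be threaded through the Burnside-functor data underlying $\khoo$. As flagged in the discussion preceding the theorem, we do not require well-definedness up to homotopy of the resulting spectrum-level map, only the existence of \emph{some} such lift whose chain-level image coincides with the standard Jacobsson--Khovanov--Bar-Natan cobordism map; this freedom permits ad hoc choices at each elementary piece, after which the remaining verifications reduce to naturality.
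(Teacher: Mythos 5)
Your proposal is correct and follows essentially the same strategy as the paper: decompose the cobordism into Reidemeister and Morse moves, lift each to a map of odd Burnside functors (hence a map of odd Khovanov spectra), observe that the even side has a compatible lift inducing the same mod-2 chain map, and then invoke naturality of Steenrod operations together with the free-product structure of $\asig$. The only minor presentational difference is that the paper builds a single odd Burnside-functor map $\Phi_o$ and obtains the even one by applying the forgetful functor, $\Phi_e = \forgot\Phi_o$ (Lemma~\ref{lem:main-cobordism}), which makes the agreement of the two mod-2 reductions automatic rather than something to be checked separately as you propose.
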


Moreover, in Definition~\ref{def:evenaction}, we construct an even
stable homotopy type $\khoh'(L)$ (that is, a finite CW spectrum whose
cellular chain complex is the even Khovanov chain complex), equipped
with a $\ZZ_2$-action. This $\ZZ_2$-action is not visible from the
Burnside functor constructed in \cite{lls1}, so in some sense this
$\ZZ_2$-action arises from the odd theory.  We conjecture that the
even space constructed here is stable homotopy equivalent to the
construction of \cite{lshomotopytype}.
\begin{thm}\label{thm:evenintro}
  The (stable) homotopy type of the even Khovanov spectrum $\khoh'(L)$
  from Definition~\ref{def:evenaction} is independent of the choices
  in its construction and is an invariant of the isotopy class of $L$.
  Its reduced cellular cochain complex agrees with the odd Khovanov
  complex $\KhCx_e(L)$,
  \[
  \redcellC^i(\khoh^{\prime\,j}(L))=\KhCx_e^{i,j}(L),
  \]
  with the cells mapping to the distinguished generators of $\oddKhCx(L)$.
\end{thm}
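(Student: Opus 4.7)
The plan is to mirror the proof of Theorem~\ref{thm:oddkhmain} rather than reprove it from scratch. Both $\khoo(L)$ and $\khoh'(L)$ are produced by feeding a suitably equivariant diagram of Burnside categories---extracted from the same unified Khovanov chain data---into the Lawson-Lipshitz-Sarkar realization machinery from \cite{lls1}, with the only difference being how the $\ZZ_2$-symmetry of the unified theory is folded in. In particular, by this point we may assume the general framework that underlies Definition~\ref{def:oddkhmain} and Definition~\ref{def:evenaction} is already set up, together with its invariance properties as proved for Theorem~\ref{thm:oddkhmain}; so the task reduces to (i) identifying the cellular cochain complex, (ii) checking invariance under Reidemeister moves and the ambient choices, and (iii) addressing the fact that the $\ZZ_2$-action here does not come from the LLS Burnside functor.

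For step (i), I would unwind Definition~\ref{def:evenaction} directly: a Burnside-valued cube is realized as a CW spectrum whose non-basepoint cells are indexed by the vertices of the cube (with appropriate bigrading shifts) and whose attaching maps are computed from the correspondences in the cube. In the unified-to-even specialization, the generators of the cube vertices correspond exactly to the distinguished Khovanov generators of $\KhCx_e^{*,j}(L)$, and the correspondences induce the signed even Khovanov differential after passing to $\pi_0^\mathrm{st}$; this gives the identification $\redcellC^i(\khoh^{\prime\,j}(L))=\KhCx_e^{i,j}(L)$ together with the matching of cells to generators. This is essentially the same calculation as in Theorem~\ref{thm:oddkhmain} at the underlying (non-equivariant) level, modulo tracking how the specialization to the even parameter affects the signs in the 2-dimensional faces.

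For step (ii), invariance under Reidemeister moves proceeds by constructing cube-shaped zig-zags of Burnside bimodules, one per move, exactly as in the odd case (and as in \cite{lls1}). Forgetting the $\ZZ_2$-action, these zig-zags induce stable equivalences on realizations; the argument is identical to that for Theorem~\ref{thm:oddkhmain} after replacing the $\xi=-1$ specialization by $\xi=1$, and the algebraic identities needed (sign verifications on bigons and squares) all come from the unified chain-level homotopies already used for the odd invariance proof. Independence from the auxiliary choices in Definition~\ref{def:evenaction}---orderings of crossings, choices of resolution data, etc.---follows by the same kind of cofinal-diagram argument used for $\khoo(L)$.

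The main obstacle, and the reason this theorem warrants separate treatment rather than being folded into Theorem~\ref{thm:oddkhmain}, is the remark made just after Definition~\ref{def:evenaction}: the $\ZZ_2$-action on $\khoh'(L)$ is \emph{not} visible from the LLS Burnside functor alone, so the standard invariance machinery needs to be upgraded to carry the symmetry. My plan is to produce the Reidemeister equivalences inside the richer category of equivariant (or unified) Burnside bimodules, then apply the equivariant realization functor, and only afterwards forget the action to land in plain spectra. Concretely, this amounts to checking that every homotopy used in the odd invariance proof can be chosen equivariantly with respect to the $\xi\mapsto-\xi$ symmetry---a finite verification, but the step where genuine work beyond the odd case is required, and the one I would budget the most care for.
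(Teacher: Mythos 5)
Your overall strategy---realize a signed Burnside functor and use equivariant invariance machinery---is the right one, but your proposal takes a substantially longer route than the paper does and contains a misunderstanding of Definition~\ref{def:evenaction} that would lead you astray.

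First, the misunderstanding. You speak of a ``$\xi=1$ specialization'' and of the ``unified-to-even specialization'' as if $\khoh'(L)$ were built from the even Burnside functor $F_e$ or from some $\xi=1$ reduction of a unified object. That is not what Definition~\ref{def:evenaction} says: $\khoh^{\prime\,j}(L)=\widehat{\Realize{\Sigma^{-n_-}F^j_o}}$ is the \emph{doubly-equivariant realization} of the \emph{same} signed Burnside functor $F_o$ that underlies $\khoo(L)$. The only difference between $\khoo(L)$ and $\khoh'(L)$ is the realization procedure (reflection in one coordinate vs.\ reflection in two coordinates), not the input functor. The cochain identification $\redcellC^i(\khoh^{\prime\,j}(L))=\KhCx_e^{i,j}(L)$ then follows from Proposition~\ref{prop:totalization-equivariant} (the doubly-equivariant realization has cochain complex $\Tot(\forgot F_o)$) plus Proposition~\ref{prop:odd-recovers-even} ($\forgot F_o=F_e$)---not from specializing $\xi$.

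Second, the redundancy, and the incorrect budgeting of effort. You say you would ``budget the most care'' for verifying that every homotopy in the odd invariance proof can be chosen equivariantly. In fact no such ad hoc verification is needed once the framework is set up, and the paper's actual proof is two sentences long. The content you are worrying about is already packaged into Definition~\ref{def:stableq}: an \emph{equivariant equivalence} of signed Burnside functors is, by definition, a stable equivalence whose natural transformations become chain homotopy equivalences over $\ZZ_u$ after applying $\oddeq$. Theorem~\ref{thm:odd-functor-invariant}---which you say you are assuming---establishes precisely that the equivariant equivalence class of $\Sigma^{-n_-}F_o$ is a link invariant. Lemma~\ref{lem:functor-map-gives-space-map-equivariant} then says, as one of its three assertions, that an equivariant equivalence of signed Burnside functors induces a $\ZZ_2$-equivariant homotopy equivalence on the doubly-equivariant realizations $\widehat{\Realize{\cdot}}$. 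Chaining these two citations gives Theorem~\ref{thm:evenintro} immediately. There is no Reidemeister case-check left to do, and no new equivariance of homotopies to verify: you should not re-run the invariance argument in parallel when you have already invoked its output.
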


Finally, similar to unified Khovanov homology, we combine
$\khoh(L)$ and $\khoo(L)$ into a single finite $\ZZ_2 \times
\ZZ_2$-equivariant CW spectrum $\unis(L)=\bigvee_j \unis^j(L)$, which
we think of as a `unified Khovanov spectrum'
(Definition~\ref{def:unifiedintro}):
\begin{thm}\label{thm:unifiedintro}
  The (stable) homotopy type of the unified Khovanov spectrum
  $\unis(L)$ from Definition~\ref{def:unifiedintro} is independent
  of the choices in its construction and is an invariant of the
  isotopy class of $L$.  Its reduced cellular cochain complex agrees
  with the unified Khovanov complex $\unic(L)$,
  \[
  \redcellC^i(\unis^j(L))= \unic^{i,j}(L),
  \]
  with the cells mapping to the distinguished generators of
  $\unic(L)$, and the two $\ZZ_2$ actions correspond to multiplication by
  $\xi$ and $-\xi$, respectively.
\end{thm}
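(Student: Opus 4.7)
The plan is to follow the template established by Theorems~\ref{thm:oddkhmain} and~\ref{thm:evenintro}, lifting the construction one categorical level by building a $\ZZ_2 \times \ZZ_2$-equivariant Burnside-like functor from the cube category that simultaneously encodes both the multiplication-by-$\xi$ and multiplication-by-$-\xi$ actions on $\unic(L)$, and then applying an equivariant realization functor. Throughout, the unified Khovanov construction of Putyra~\cite{putyra} will supply the sign data needed to glue the even and odd pictures together.

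First I would introduce an appropriate target category---call it $\eqb_2$---of $\ZZ_2 \times \ZZ_2$-equivariant correspondences, whose underlying non-equivariant data assigns to each vertex of the resolution cube the set of standard Khovanov generators (viewed as a basis of a free $\ZZ[\xi]/(1-\xi^2)$-module) and to each edge a correspondence lifting the unified differential. The two $\ZZ_2$-actions would be built into the equivariance structure by recording Putyra's sign data in two compatible ways, one matching the $\xi$-action and the other matching the $-\xi$-action, so that forgetting one or the other equivariance structure recovers the Burnside functors already used to build $\khoh'(L)$ and $\khoo(L)$ in Definitions~\ref{def:evenaction} and~\ref{def:oddkhmain}. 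Verification of the Burnside 2-functoriality axioms (compatibility on the 2-faces of the cube) is then a face-by-face sign computation using Putyra's explicit formulas; the point is that the compatibility already holds for each of the two theories separately, and the enriched datum is merely the record that they agree on the nose. Applying the equivariant realization functor of~\cite{lls1} (and its extension developed earlier in the paper) to this $\eqb_2$-valued functor produces $\unis(L) = \bigvee_j \unis^j(L)$, and the identification $\redcellC^i(\unis^j(L)) = \unic^{i,j}(L)$ with the distinguished generators follows from the general property of the realization functor, exactly as in the proofs of the earlier theorems.

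For invariance under Reidemeister moves, I would construct, for each type of move, an equivariant Burnside bimodule between the functors attached to the two diagrams. The underlying bimodules have already been constructed (non-equivariantly, and for each of the two $\ZZ_2$-actions separately) in the proofs of Theorems~\ref{thm:oddkhmain} and~\ref{thm:evenintro}; the task is to promote them to a single $\ZZ_2 \times \ZZ_2$-equivariant bimodule, which again is a matter of checking that Putyra's sign assignments are respected. Independence of the auxiliary choices in the construction and the identification of the two $\ZZ_2$-actions with multiplication by $\xi$ and $-\xi$ on cochains will then be immediate from the corresponding statements for $\khoh'$ and $\khoo$ together with the tautological way in which the equivariance was built.

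The main obstacle will be the Reidemeister~III move: in the non-equivariant setting one has considerable flexibility in choosing the homotopy equivalence, but here one must exhibit a \emph{single} equivariant bimodule that simultaneously witnesses the two chain homotopies coming from the even and odd theories. I expect this to reduce, after unwinding definitions, to verifying a finite list of coherence identities among Putyra's signs on the cube vertices involved in the triangle move. These identities should all follow from the associativity/coassociativity relations in the unified Frobenius structure, but checking them carefully and formulating the right form of equivariant bimodule so that the check is tractable is the delicate step of the argument.
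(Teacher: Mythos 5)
Your proposal diverges from the paper's proof in a way that hides a genuine conceptual gap. The paper does \emph{not} introduce a $\ZZ_2\times\ZZ_2$-equivariant Burnside category $\eqb_2$, and the attempt to do so runs into a structural problem: the two $\ZZ_2$-actions on $\unis(L)$ live at different levels of the construction. The action denoted $\ZZ_2^+$ (multiplication by $\xi$) comes from the $\ZZ_2$-equivariance of the underlying $\eqb$-valued functor $\oddeq F_o$ and permutes cells of the coarse CW structure. The action denoted $\ZZ_2^-$ (multiplication by $-\xi$) comes only at the realization stage, from the reflection $\refl$ built into the $\refl$-equivariant spatial refinement of Definition~\ref{def:spacref-equivariant}; it acts on each cell by an orientation-reversing homeomorphism of the constituent spheres and is \emph{not} a CW action for the standard structure (hence the ``fine'' CW structure introduced in \S\ref{sec:cube-shaped-realize}). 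If one tries to encode both actions at the correspondence level with free $\ZZ_2\times\ZZ_2$-sets, abelianization produces $\ZZ[\ZZ_2\times\ZZ_2]$-modules, which have twice the rank that $\unic(L)$ demands. So the $\eqb_2$ you propose does not exist in the form you describe, and the claim that ``forgetting one or the other equivariance structure recovers the Burnside functors already used to build $\khoh'(L)$ and $\khoo(L)$'' is not how those spectra are built: both $\khoh'(L)$ and $\khoo(L)$ are realizations of the \emph{same} signed Burnside functor $F_o\from\two^n\to\oddb$, differing only in the choice of (doubly-equivariant versus $\refl$-equivariant) spatial refinement, not in the Burnside functor itself.

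A second, downstream consequence is that your plan to re-verify Reidemeister invariance equivariantly, and your anticipated struggle with coherence identities for RIII, is unnecessary. The paper concentrates all invariance work into a single statement about the signed Burnside functor, Theorem~\ref{thm:odd-functor-invariant}, which asserts that $\Sigma^{-n_-}F_o^j$ is well-defined up to \emph{equivariant} equivalence (a notion defined already at the level of $\oddb$, by requiring that $\Tot(\oddeq\eta)$ be a $\ZZ_u$-chain homotopy equivalence). Lemma~\ref{lem:functor-map-gives-space-map-equivariant} then transports an equivariant equivalence of odd Burnside functors directly to a $\ZZ_2^+\times\ZZ_2^-$-equivariant stable homotopy equivalence of the realizations $\Realize{\Sigma^{-n_-}\oddeq F_o^j}=\unis^j(L)$, and Proposition~\ref{prop:totalization-equivariant} identifies $\redcellC(\unis^j(L))$ with $\Tot(\oddeq F_o^j)$ as $\ZZ_u$-modules. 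The paper's proof of Theorem~\ref{thm:unifiedintro} is a one-paragraph corollary of these three ingredients; there are no new sign checks on 2-faces or 3-faces beyond those already done once for $F_o$.

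Under an appropriate revision of strategy, you should: (a) recognize that the Burnside-level invariant is the signed functor $F_o$, (b) take the unified functor to be $\oddeq F_o\from\two^n\to\eqb$ (a single $\ZZ_2$-equivariance), and (c) obtain the second $\ZZ_2$-action from the $\refl$-equivariant refinement at realization time, then cite the already-proved invariance of $F_o$ and the realization lemmas. No new target 2-category and no new Reidemeister verification are required.
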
   

There is also a reduced unified spectrum $\reduniKhspace(L)$ for which the analogue of Theorem \ref{thm:redkh} (Proposition \ref{prop:reduced-unified}) holds.

The different spectra and the different actions admit the following
relationship:
\begin{thm}\label{thm:equivariance} Let $L$ be a link diagram.
\begin{enumerate}[leftmargin=*]
\item\label{itm:thm-equivariance-1} The action of the two
  $\ZZ_2$-factors is free away from the basepoint on $\unis(L)$: $\khoh(L)$ is the geometric
  quotient under the action of the first factor (sometimes called
  $\ZZ_2^+$) and $\khoo(L)$ is the geometric quotient under the second
  factor (sometimes called $\ZZ_2^-$); moreover, the $\ZZ_2$-action on
  $\khoo(L)$ is the quotient of the $\ZZ_2^+$-action on $\unis(L)$.
\item\label{itm:thm-equivariance-2} The geometric fixed-point set of $\khoo^j(L)$ under $\ZZ_2$ is
  precisely $\Sigma^{-1}\khoh^j(L)$, and quotienting by the fixed
  point set produces $\unis(L)$. The induced $\ZZ_2$-action on
  $\unis(L)$ agrees with the $\ZZ_2^+$-action. This produces a
  cofibration sequence
  \[
  \Sigma^{-1}\khoh(L) \to \khoo(L) \to \unis(L),
  \]
  and the induced long exact sequence on cohomology agrees with
  the one constructed in \cite{putyrashumakovitch}.
\item\label{itm:thm-equivariance-3} The Puppe map $\unis(L)\to\khoh(L)$ from the previous
  cofibration sequence is homotopic to the quotient map
  $\unis(L)\to\unis(L)/\ZZ_2^+$.
\item\label{itm:thm-equivariance-4} The geometric fixed-point set of
  $\khoh^{\prime\,j}(L)$ under $\ZZ_2$ is precisely
  $\Sigma^{-1}\khoo^j(L)$, and quotienting by the fixed point set
  produces $\unis(L)$. The induced $\ZZ_2$-action on $\unis(L)$ agrees
  with the $\ZZ_2^-$-action. This produces a cofibration sequence
  \[
  \Sigma^{-1}\khoo(L) \to \khoh^\prime(L) \to \unis(L),
  \]
  and the induced long exact sequence of cohomology agrees with
  the one constructed in \cite{putyrashumakovitch}.
\item\label{itm:thm-equivariance-5} The Puppe map $\unis(L)\to\khoo(L)$ from the previous
  cofibration sequence is homotopic to the quotient map
  $\unis(L)\to\unis(L)/\ZZ_2^-$.
\end{enumerate}
\end{thm}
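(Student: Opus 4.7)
The plan is to prove all five parts by working at the level of the equivariant Burnside functors underlying $\unis(L)$, $\khoh(L)$, $\khoo(L)$, and $\khoh'(L)$. The central observation is that $\unis(L)$ is, by construction, the realization of a $(\ZZ_2^+\times\ZZ_2^-)$-equivariant Burnside functor whose underlying chain complex is $\unic(L)$, and setting $\xi=1$ (respectively $\xi=-1$) in $\unic(L)$ recovers $\KhCx_e(L)$ (respectively $\oddKhCx(L)$). I would first check that these algebraic specializations lift to the Burnside level: the $\ZZ_2^+$ and $\ZZ_2^-$ quotients of the unified Burnside functor yield the Burnside functors used in Definition~\ref{def:evenaction} and Definition~\ref{def:oddkhmain} (and similarly for $\khoh(L)$ via the usual even Burnside functor).

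For part~\eqref{itm:thm-equivariance-1}, the generators of $\unic(L)$ come naturally in $\ZZ_2^+$-orbits and $\ZZ_2^-$-orbits of size two (paired by multiplication by $\xi$ and $-\xi$), so both actions freely permute the non-basepoint cells; freeness descends to the spectrum-level action on $\unis(L)$. Once the quotient is identified with the correct even and odd Burnside functor on the nose, it follows that $\unis(L)/\ZZ_2^+\simeq\khoh(L)$ and $\unis(L)/\ZZ_2^-\simeq\khoo(L)$, and that the residual $\ZZ_2^+$-action on $\khoo(L)=\unis(L)/\ZZ_2^-$ is precisely the one constructed in Definition~\ref{def:oddkhmain}.

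For parts~\eqref{itm:thm-equivariance-2} and~\eqref{itm:thm-equivariance-4}, I would appeal to the general principle that the geometric fixed points of the realization of a $\ZZ_2$-equivariant Burnside functor $F$ is the realization of the fixed-point subfunctor $F^{\ZZ_2}$. For $\khoo(L)$ this fixed-point subfunctor should coincide with the (appropriately shifted) even Burnside functor underlying $\khoh(L)$, and for $\khoh'(L)$ it should coincide with the odd Burnside functor, giving $\Sigma^{-1}\khoh(L)$ and $\Sigma^{-1}\khoo(L)$ respectively. The cofibration sequences then arise from the standard pointed cofibration $X^{\ZZ_2}\to X\to X/X^{\ZZ_2}$, with the quotient identified with $\unis(L)$ via the description of the generators of $\unic(L)$ as pairs acted on by $\xi$; the residual $\ZZ_2$-action on the quotient is automatically the complementary one. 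The chain-level long exact sequence comparison with \cite{putyrashumakovitch} reduces to matching the connecting homomorphisms at the level of Burnside functors, which is a direct check on generators.

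For parts~\eqref{itm:thm-equivariance-3} and~\eqref{itm:thm-equivariance-5}, I would use the following general fact: if $Y\into X$ is the inclusion of the $\ZZ_2$-fixed set and the induced $\ZZ_2$-action on $X/Y$ is free, then the Puppe map $X/Y\to\Sigma Y$ agrees up to homotopy with the quotient map composed with an identification of $\Sigma Y$ with $(X/Y)/\ZZ_2$ (when the latter exists at the spectrum level). Part~\eqref{itm:thm-equivariance-1} gives precisely the required freeness of the residual action on $\unis(L)$, so this general principle applies directly in both cases.

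The main obstacle will be parts~\eqref{itm:thm-equivariance-2} and~\eqref{itm:thm-equivariance-4}: concretely identifying the fixed-point Burnside subfunctor and verifying that the sign conventions used in the odd construction produce exactly the even Burnside data (up to the degree shift by one), and vice versa. This requires a careful bookkeeping of the sign assignments that differentiate the odd theory from the even theory, and showing that the $\ZZ_2$-involution flips precisely those signs so that the fixed-point data matches the other theory on the nose; I would expect this to fall out of the unified description, where the involutions $\xi\mapsto\xi$ and $\xi\mapsto-\xi$ directly interchange the two specializations.
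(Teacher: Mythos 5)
Your treatment of parts~(\ref{itm:thm-equivariance-1}), (\ref{itm:thm-equivariance-2}), (\ref{itm:thm-equivariance-4}) is essentially the paper's: the freeness statement and the identifications of quotients and geometric fixed points follow from the equivariant realization machinery (Corollary~\ref{cor:burnlimwelldef-equivariant}, building on Propositions~\ref{prop:cube-equivariant} and~\ref{prop:equivdiag-equivariant}), together with the observation that $\forgot\circ F_o = F_e$. One caveat: your phrase ``fixed-point subfunctor $F^{\ZZ_2}$'' is not quite the right picture, since the signed Burnside functor $F_o\from\two^n\to\oddb$ does not itself carry a $\ZZ_2$-action; the $\ZZ_2$-action (and hence the fixed set) lives at the level of the $\refl$-equivariant spatial refinement, which is where Proposition~\ref{prop:equivdiag-equivariant} operates.

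The genuine gap is in parts~(\ref{itm:thm-equivariance-3}) and (\ref{itm:thm-equivariance-5}). The ``general fact'' you invoke is false. Take $X=S^2$ with $\ZZ_2$ acting by rotation by $\pi$ about the $z$-axis, so $Y=X^{\ZZ_2}=S^0$ and the action away from $Y$ is free. Then $X/Y\simeq S^2\vee S^1$ and $(X/Y)/\ZZ_2\cong(X/\ZZ_2)/Y\simeq S^2\vee S^1$, which is not even homotopy equivalent to $\Sigma Y=S^1$; and even in cases where an abstract identification $(X/Y)/\ZZ_2\simeq\Sigma Y$ happens to exist, there is no general reason the Puppe map should agree with the quotient map. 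The hypothesis that actually does the work is that the orbit space $X/\ZZ_2$ is literally the cone $C(Y)$ with $Y$ included as the cone point boundary; then $X\cup C(Y)$ quotients by $\ZZ_2$ to $C(Y)\cup C(Y)=\Sigma Y$, and both the Puppe collapse $P$ and $R\circ Q$ (the quotient-map route) factor through this as collapse of one or the other cone, which are homotopic to the identity. The paper establishes the crucial fact $X/\ZZ_2=C(Y)$ by choosing a $\refl$-equivariant spatial refinement in which every sub-box has the form $[0,1]\times B$, stretching the full length in the reflected coordinate; the reflection then has quotient exactly half of each cell, i.e.\ a cone on the fixed locus. Without this geometric input (or an equivalent replacement), your argument for (\ref{itm:thm-equivariance-3}) and (\ref{itm:thm-equivariance-5}) does not go through. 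Relatedly, you propose to match the Putyra--Shumakovitch long exact sequences by a ``direct check on generators''; the paper instead deduces that agreement \emph{from} (\ref{itm:thm-equivariance-3}) and (\ref{itm:thm-equivariance-5}), so your route for that comparison inherits the same gap unless you supply the connecting-map bookkeeping independently.
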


\subsection{Burnside categories and functors}\label{subsec:tech}
This paper uses the machinery of Burnside functors from
\cite{hkk,lls1}.  There, the dual of the Khovanov chain complex of a
link diagram with $n$ (ordered) crossings is viewed as a diagram of
abelian groups:
\[
\AbFunc_e\from (\two^n)^\op \to \ZZ\Mod,
\]  
where $\two^n$ is the category with objects elements of $\{0,1\}^n$
and a unique arrow $a \to b$ if $a \geq b$.

In order to construct a stable homotopy type, one considers a
certain $2$-category $\burn$, \emph{the Burnside category}, whose
objects are finite sets and whose $1$-morphisms are finite
correspondences.  The $2$-category $\burn$ naturally comes with a
forgetful functor to abelian groups $\burn \to \ZZ\Mod$ by sending a
set $S$ to the free abelian group $\ZZ\basis{S}$ generated by $S$.
The Khovanov stable homotopy type arises from a lift:
\[
\begin{tikzpicture}[baseline={([yshift=-.8ex]current  bounding  box.center)},xscale=2.5,yscale=1.5]
\node (a0) at (0,0) {$\two^n$};
\node (a1) at (1,0) {$\ZZ\Mod$};
\node (b1) at (1,1) {$\burn$};

\draw[->] (a0) -- (a1) node[pos=0.5,anchor=north] {\scriptsize
  $\AbFunc^\op_e$}; \draw[->] (b1) -- (a1) node[pos=0.2,anchor=east] {};
\draw[->,dashed] (a0) -- (b1) node[pos=0.5,anchor=south east]{\scriptsize $F_e$};

\end{tikzpicture}
\]
The \emph{realization} of any functor $F_e\from \two^n \to \burn$ is
then defined as a finite CW spectrum $\Realize{F_e}$ associated to
$F_e$. Indeed, as shown in \cite{lls2}, the stable equivalence class
of the functor $F_e$, modulo shifting by the number of negative
crossings $n_-$, is itself an invariant (which recovers the even
Khovanov homotopy type $\khoh$---the appropriately shifted stable
homotopy type of $\Realize{F_e}$).

In this paper, we first review, in \S\ref{sec:khovanovhomology}, the odd
Khovanov chain complex, viewing it as a diagram:
\[
  \AbFunc_o\from (\two^n)^\op\to \ZZ\Mod.
\] 
Indeed, $\AbFunc_o$ and $\AbFunc_e$ can be combined by a pullback into
a unified functor
\[
  \AbFunc_u\from (\two^n)^\op\to \ZZ_u\Mod.
\] 
where $\ZZ_u=\ZZ[\xi]/(\xi^2-1)$, and $\AbFunc_e$ (respectively,
$\AbFunc_o$) is obtained by setting $\xi=+1$ (respectively, $\xi=-1$).

Then in \S\ref{sec:burn}, we move to some slight generalizations of
the Burnside $2$-category, $\oddb$, the \emph{signed Burnside
  category} in order to take account of the signs appearing in odd
Khovanov homology, and $\eqb$, the \emph{free $\ZZ_2$-equivariant
  Burnside category} in order to take account of the $\xi$-action. 

In \S\ref{sec:box} we show how the realization construction of
\cite{lls1} generalizes to $\oddb$ and $\eqb$.  Roughly, the
realization process of a functor to $\oddb$ is comparable to the
realization process of a functor to $\burn$, except that where a sign
appears, the corresponding cell is glued in by a fixed
orientation-reversing homeomorphism.

And then in \S\ref{sec:oddkh} we construct lifts
\[
\begin{tikzpicture}[ cross line/.style={preaction={draw=white, -, line width=4pt}},
baseline={([yshift=-.8ex]current  bounding  box.center)},xscale=3,yscale=1.5]

\node (ze) at (0,0.5) {$\ZZ\Mod$};
\node (zo) at (1.4,1) {$\ZZ\Mod$};
\node (zu) at (2,0) {$\ZZ_u\Mod$};

\node (two) at ($(ze)!0.5!(zo)$) {$\two^n$};%(0.65,0.8)

\node (be) at ($(ze)+(0,2)$) {$\burn$};
\node (bo) at ($(zo)+(0,2)$) {$\oddb$};
\node (bu) at ($(zu)+(0,2)$) {$\eqb$};

\foreach \t/\d/\a/\b in {e//south east/east,o/dashed/north west/east,u/dashed/south west/south}{
\draw[->] (two) -- (z\t) node[pos=0.5,inner sep=0,outer sep=1pt,anchor=\a] {\scriptsize $\AbFunc^\op_\t$};
\draw[cross line, ->,\d] (two) -- (b\t) node[pos=0.4,anchor=\b] {\scriptsize $F_\t$} ;
\draw[->] (b\t) -- (z\t);
}

\draw[->] (zu) edge node[pos=0.8,anchor=west,align=left] {\scriptsize $\xi=-1$} (zo) edge node[pos=0.5,anchor=north] {\scriptsize $\xi=+1$} (ze); 

\draw[cross line, ->] (bu) -- (be) node[pos=0.6,anchor=south] {\scriptsize $\quotient$};
\draw[->] (bo) edge node[pos=0.5,anchor=south east] {\scriptsize $\forgot$} (be) edge
node[pos=0.5,anchor=south west] {\scriptsize $\oddeq$} (bu);

\end{tikzpicture}
\]
where the arrows among the various versions of Burnside categories are
those from Figure~\ref{fig:burnsidecategories}. Note that the lift
\[
F_o\from\two^n\to\oddb
\]
recovers all the other lifts; it decomposes along quantum gradings
$F_o=\amalg_jF_o^j$, and its equivariant equivalence class, after
shifting by $n_-$, recovers all the (correctly shifted) stable
homotopy types obtained by the above-mentioned realization procedure,
and is itself an invariant:
\begin{thm}\label{thm:odd-functor-invariant}
  The equivariant equivalence class of the
  shifted functor $\Sigma^{-n_-}F^j_o$ from
  Definition~\ref{def:kh-odd-burnside-functor} is independent of all the
  choices in its construction and is a link invariant.
\end{thm}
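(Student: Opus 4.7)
The plan is to follow the strategy used in \cite{lls1, lls2} to prove the analogous invariance statement for the even functor $F_e\from\two^n\to\burn$, replacing $\burn$ by the signed Burnside category $\oddb$ and working throughout in the equivariant setting (i.e.\ tracking naturality with respect to $\forgot\from\oddb\to\burn$ and $\oddeq\from\oddb\to\eqb$). The argument splits naturally into two halves: (i) show the equivariant equivalence class of $F_o^j$ is independent of all the auxiliary data in its construction (a total ordering of the crossings, a sign assignment on the edges of the cube, and a ladybug-type matching used to resolve the $2 \times 2$ sign ambiguities at odd-Khovanov ``ladybug'' faces), and (ii) show $\Sigma^{-n_-}F_o^j$ is unchanged up to equivariant equivalence under each of the three Reidemeister moves.

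For (i), permuting the ordering of the crossings induces a canonical isomorphism of functors from $\two^n$ after relabeling the indexing cube. Any two edge sign assignments differ by the coboundary of a vertex-wise sign, which produces an explicit isomorphism of functors to $\oddb$ and which is automatically $\forgot$- and $\oddeq$-natural (since both target functors are insensitive to the vertex signs). The ladybug-matching choice is the most delicate: one must exhibit an explicit $2$-isomorphism in $\oddb$ relating the two admissible matchings, and verify it is equivariant. I would build this $2$-isomorphism by combining the reflection symmetry $\refl$ of the odd resolution configuration at a ladybug face with the sign-swap automorphism already present in $\oddb$, mirroring the even ladybug argument of \cite{lls1}.

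For (ii), for each Reidemeister move $L \leadsto L'$ I would construct an augmented functor $\widetilde{F}_o^j\from\two^{n+k}\to\oddb$ on the cube that includes both the old and the new crossings, and then apply the signed Burnside analogue of the cancellation (or ``simplification'') lemma of \cite{lls1} to produce equivariant equivalences of $\widetilde F^j_o$ with both $F^j_o(L)$ and $F^j_o(L')$, after compensating the shift by the change in $n_-$. In each case the elementary cobordism associated to the move lifts to a $1$-morphism in $\oddb$, and the cancellations correspond to acyclic summands detected at the level of $\forgot$. The Reidemeister~III case requires the most bookkeeping, since the sign coboundaries and the ladybug matchings interact there; as in the even theory one would reduce it to a finite check on a fixed local model after making consistent choices at all six crossings.

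The main obstacle is controlling signs and equivariance simultaneously. At every step where \cite{lls1} uses a homotopy or isomorphism of Burnside functors, its lift to $\oddb$ carries a sign ambiguity which must be fixed coherently so that the resulting $2$-morphism is compatible with both $\forgot$ and $\oddeq$; only then does one obtain an equivariant equivalence in $\eqb$. The ladybug step and the Reidemeister~III cancellations are where this coherence is hardest to arrange, and these are the two places where I expect the bulk of the work to concentrate. Once these are in place, the invariance under Reidemeister~I (which accounts for the shift by $n_-$) and Reidemeister~II follow by smaller, essentially local, cancellation arguments parallel to the even case.
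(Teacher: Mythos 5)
Your high-level decomposition — first independence of auxiliary data, then Reidemeister invariance via cancellation arguments lifted to the signed Burnside category, tracking equivariance throughout — is the right skeleton, and it matches the strategy in the paper. But there is a conceptual error at the step you flag as hardest. You treat a ``ladybug-type matching'' as an auxiliary choice to be shown irrelevant; in the odd theory there is no such choice. Once the signed correspondences on edges are fixed, the $2$-morphisms across every $2$-face are uniquely determined, because at a ladybug face the two elements of the composite correspondence carry opposite signs and hence have a unique sign-preserving matching. (That is the key simplification of the odd theory over the even one, and it is why the even ladybug matching is recovered \emph{automatically} after applying $\forgot$, as in Figure~\ref{fig:odd-to-even}.) What you likely have in mind is the choice of \emph{type} (X vs.\ Y) of edge assignment, but your proposed fix — an explicit local $2$-isomorphism built from $\refl$ at a ladybug face — cannot work, because a change of type is a global change in the edge assignment, not a local one at a single face. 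The paper handles it by citing \cite[Lemma~2.4]{ors}: a type-X assignment for crossing orientations $o$ is simultaneously a type-Y assignment for a different $o'$, reducing the problem to independence of crossing orientations (or, alternatively, one can use Viro's reflection trick and Reidemeister moves). You also omit two of the genuine auxiliary choices: orientations at crossings (handled via \cite[Lemma~2.3]{ors}) and the total ordering of circles at each resolution (a sign reassignment).

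Two further gaps in part~(ii): for RIII you cannot simply ``reduce to a finite local check on a fixed cube,'' because the chain maps used in \cite{kho1,ors} for RIII do not send Khovanov generators to Khovanov generators, so they do not directly lift to natural transformations of Burnside functors. The paper instead follows \cite[Proposition~6.4]{lshomotopytype} and uses the braid-like RIII, where one can perform a sequence of sub-/quotient-functor cancellations inside a six-dimensional partial cube. Finally, your worry about ``fixing signs coherently so that the maps are compatible with both $\forgot$ and $\oddeq$'' is handled more cleanly than you anticipate: the notion of equivariant equivalence (Definition~\ref{def:stableq}) requires only that $\Tot(\oddeq\eta)$ be a chain homotopy equivalence over $\ZZ_u$, and the cancellations of types~\ref{item:merge-cancel} and \ref{item:split-cancel} are already chain homotopy equivalences in the unified theory; one does not need to separately coordinate the even and odd lifts.
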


\subsection*{Acknowledgement} We are grateful to Anna Beliakova, Mike Hill, Tyler
Lawson, Francesco Lin, Robert Lipshitz, and Ciprian Manolescu for many helpful
conversations.

\section{Khovanov homologies}\label{sec:khovanovhomology}

In this section we review the definitions and basic properties of
three versions of Khovanov homology for an oriented link $L$: ordinary
or {\emph{even}} Khovanov homology $\Kh(L)=\Kh_e(L)$, defined by
Khovanov \cite{kho1}; {\emph{odd}} Khovanov homology $\kho(L)$ defined
by Ozsv\'{a}th, Rasmussen and Szab\'{o} \cite{ors}; and $\unih(L)$,
the {\emph{unified}} theory of Putyra and Putyra-Shumakovitch
\cite{putyra,putyrashumakovitch}, which generalizes the previous two
theories. These three homological invariants will be upgraded to
Burnside functors in \S\ref{sec:oddkh}.

\subsection{The cube category}\label{subsec:prelim}
We first recall the cube category.  Call $\two=\{0,1\}$ the
one-dimensional cube, viewed as a partially ordered set by setting
$1>0$, or as a category with a single non-identity morphism from $1$
to $0$.

Call $\two^n=\{0,1\}^n$ the $n$-dimensional cube, with the partial
order given by
\[ u=(u_1,\dots,u_n) \geq v=(v_1,\dots,v_n) \text{ if and only if }
\forall \; i \; (u_i \geq v_i).
\] 
It has the categorical structure induced by the partial order, where
$\Hom_{\two^n}(u,v)$ has a single element if $u \geq v$ and is empty
otherwise.  Write $\phi_{u,v}$ for the unique morphism $u \to v$ if it
exists.  The cube carries a grading given by $|v|=\sum_i v_i$.  Write
$u\geqslant_k v$ if $u\geq v$ and $|u|-|v|=k$. When $u\geqslant_1 v$,
call the corresponding morphism $\phi_{u,v}$ an {\emph{edge}}.

\begin{defn}\label{def:signassign} The {\emph{standard sign
      assigment}} $s$ is the following function from edges of $\two^n$
  to $\ZZ_2$. For $u\geqslant_1 v$, let $k$ be the unique element in
  $\{1,\dots,n\}$ with $u_k > v_k$. Then
\[ 
	s_{u,v} \; := \; \sum^{k-1}_{i=1} u_i \bmod{2}.
\]
\end{defn}
Note that $s$ may be viewed as a 1-cochain in
$\cellC^*([0,1]^n;\ZZ_2)$. In general, $s+c$ is called a
\emph{sign assignment} for any $1$-cocycle $c$ in
$\cellC^*([0,1]^n;\ZZ_2)$.

\subsection{Some rings and modules} We will often write $\ZZ_2$
multiplicatively as $\{1,\xi\}$. The integral group ring of
$\ZZ_2$ then has the presentation $\ZZ[\xi]/(\xi^2-1)$,
which we abbreviate to $\ZZ_u$.  There are two basic
$\ZZ_u$-modules $\ZZ_e$ and $\ZZ_o$ obtained from
$\ZZ_u$ by setting $\xi=+1$ and $\xi=-1$, which fit into the
following diagram:
\begin{equation}
\begin{tikzpicture}[baseline={([yshift=-.8ex]current  bounding  box.center)},xscale=2.5,yscale=1.5]
\node (a0) at (0,0) {$\ZZ_u$};
\node (b1) at (-0.5,-0.75) {$\ZZ_e$};
\node (b2) at (0.5,-0.75) {$\ZZ_o$};
\node (c0) at (0,-1.5) {$\ZZ_2$};

\draw[->] (a0) -- (b1) node[pos=0.2,anchor=east] {\tiny$\xi=+1\;\;$};
\draw[->] (a0) -- (b2) node[pos=0.2,anchor=west] {\tiny$\;\;\xi=-1$};
\draw[->] (b1) -- (c0) node[pos=0.5,anchor=south] {};
\draw[->] (b2) -- (c0) node[pos=0.5,anchor=east] {};

\end{tikzpicture}\label{eq:pullback}
\end{equation}
The modules $\ZZ_e$ and $\ZZ_o$ are both infinite cyclic groups, for
which $\xi\in \ZZ_u$ acts as $+1$ on $\ZZ_e$, and by $-1$ on
$\ZZ_o$. All maps in the above diagram are surjections, and in fact
$\ZZ_u$ is a pull-back for this diagram in the category of
rings. Equivalently, $\ZZ_u$ is isomorphic to the subring of
$\ZZ\oplus \ZZ$ consisting of pairs $(a,b)$ with $a\equiv b
\bmod{2}$. Note that the kernel of the map $\xi=+1$ (resp. $\xi=-1$)
in the diagram is isomorphic to $\ZZ_o$ (resp. $\ZZ_e$). In
particular, we have a short exact sequence
$0\to \ZZ_e \to \ZZ_u\to \ZZ_o\to 0$, and an analogous exact sequence
with $e$ and $o$ swapped.

Now let $S$ be any finite set, and let $T(S)$ be the tensor algebra
generated by $S$ over $\ZZ_u$. Let $I$ be the two-sided ideal
of $T(S)$ generated by elements $x\otimes x$ and $x\otimes y - \xi
y\otimes x$ where $x,y\in S$.
\begin{defn}\label{def:uext} 
  Given a finite set $S$, we define the $\ZZ_u$-module
  $\Lambda_u(S):= T(S)/I$.
\end{defn}
We will abuse notation and write $x_1\otimes \cdots \otimes x_n\in
\Lambda_u(S)$ for the equivalence class of the element $x_1\otimes
\cdots \otimes x_n\in T(S)$, whenever each $x_i\in S$. We have the
fundamental relation
\[
x_1\otimes \cdots \otimes x_n \; = \;
\xi^{\text{sign}(\sigma)}x_{\sigma(1)}\otimes \cdots \otimes
x_{\sigma(n)}
\]
for any permutation $\sigma$ of length $n$. Upon setting $\xi=+1$, we
recover $\Lambda_e(S)$, the symmetric algebra on the set $S$ modulo
the ideal generated by squares of elements in $S$. If we set $\xi=-1$,
we recover $\Lambda_o(S)$, the usual exterior algebra on the set
$S$. If we write $\Lambda_2(S)$ for the $\ZZ_2$-exterior
algebra on the set $S$, then these four algebras fit into a pull-back
diamond analogous to Diagram~\eqref{eq:pullback}.

\subsection{Three Khovanov homology theories}\label{sec:threekhfunc}
We will now recall the definition of odd Khovanov homology, as well as
the definition of the unified theory (in the spirit of odd Khovanov
homology).  Let $L$ be a link diagram with $n$ ordered crossings. Each crossing
$\begin{tikzpicture}[scale=0.04,baseline={([yshift=-.8ex]current
    bounding box.center)}] \draw (0,10) -- (10,0); \node[crossing] at
  (5,5) {}; \draw (0,0) -- (10,10);
\end{tikzpicture}$
can be resolved as the \emph{$0$-resolution}
$\begin{tikzpicture}[scale=0.04,baseline={([yshift=-.8ex]current
    bounding box.center)}]
\draw (0,0) .. controls (4,4) and (4,6) .. (0,10);
\draw (10,0) .. controls (6,4) and (6,6) .. (10,10);
\end{tikzpicture}$ or the \emph{$1$-resolution}
$\begin{tikzpicture}[scale=0.04,baseline={([yshift=-.8ex]current
    bounding box.center)}] \draw (0,0)
      .. controls (4,4) and (6,4) .. (10,0); \draw (0,10) .. controls
      (4,6) and (6,6) .. (10,10);
    \end{tikzpicture}$.  We assume that $L$ is decorated by an
    {\emph{orientation of crossings}}, i.e., a choice of an arrow at
    each crossing, $\begin{tikzpicture}[scale=0.04,baseline={([yshift=-.8ex]current
    bounding box.center)}] \draw (0,10) -- (10,0); \node[crossing] at
  (5,5) {}; \draw (0,0) -- (10,10);
  \draw[thick,->,red] (0,5) --(10,5);
\end{tikzpicture}$
or $\begin{tikzpicture}[scale=0.04,baseline={([yshift=-.8ex]current
    bounding box.center)}] \draw (0,10) -- (10,0); \node[crossing] at
  (5,5) {}; \draw (0,0) -- (10,10);
  \draw[thick,<-,red] (0,5) --(10,5);
\end{tikzpicture}$, connecting the two arcs of the $0$-resolution
    at that crossing. Rotating the arrows $90^\circ$ degrees clockwise
    (this requires choosing an orientation of the plane) produces an
    arrow joining the two arcs of the $1$-resolution at that crossing
    as well. That is, a crossing $\begin{tikzpicture}[scale=0.04,baseline={([yshift=-.8ex]current
    bounding box.center)}] \draw (0,10) -- (10,0); \node[crossing] at
  (5,5) {}; \draw (0,0) -- (10,10);
  \draw[thick,->,red] (0,5) --(10,5);
\end{tikzpicture}$ (respectively, $\begin{tikzpicture}[scale=0.04,baseline={([yshift=-.8ex]current
    bounding box.center)}] \draw (0,10) -- (10,0); \node[crossing] at
  (5,5) {}; \draw (0,0) -- (10,10);
  \draw[thick,<-,red] (0,5) --(10,5);
\end{tikzpicture}$) has $0$-resolution $\begin{tikzpicture}[scale=0.04,baseline={([yshift=-.8ex]current
    bounding box.center)}]
\draw (0,0) .. controls (4,4) and (4,6) .. (0,10);
\draw (10,0) .. controls (6,4) and (6,6) .. (10,10);
  \draw[thick,->,red] (3,5) --(7,5);
\end{tikzpicture}$ (respectively, $\begin{tikzpicture}[scale=0.04,baseline={([yshift=-.8ex]current
    bounding box.center)}]
\draw (0,0) .. controls (4,4) and (4,6) .. (0,10);
\draw (10,0) .. controls (6,4) and (6,6) .. (10,10);
  \draw[thick,<-,red] (3,5) --(7,5);
\end{tikzpicture}$) and \emph{$1$-resolution}
$\begin{tikzpicture}[scale=0.04,baseline={([yshift=-.8ex]current
    bounding box.center)}] \draw (0,0)
      .. controls (4,4) and (6,4) .. (10,0); \draw (0,10) .. controls
      (4,6) and (6,6) .. (10,10);
      \draw[thick,red,->] (5,7)--(5,3);
    \end{tikzpicture}$ (respectively, $\begin{tikzpicture}[scale=0.04,baseline={([yshift=-.8ex]current
    bounding box.center)}] \draw (0,0)
      .. controls (4,4) and (6,4) .. (10,0); \draw (0,10) .. controls
      (4,6) and (6,6) .. (10,10);
      \draw[thick,red,<-] (5,7)--(5,3);
    \end{tikzpicture}$).

    We will recall the `top-down' construction of three functors
    \[
    \AbFunc_u : (\two^n)^\op \longrightarrow
    \ZZ_u\Mod, \qquad \AbFunc_e : (\two^n)^\op
    \longrightarrow \ZZ\Mod, \qquad \AbFunc_o :
    (\two^n)^\op \longrightarrow \ZZ\Mod,
    \]
    by first defining the unified functor $\AbFunc_u$, and then
    defining $\AbFunc_e$ and $\AbFunc_o$ by restricting
    scalars $\xi=+1$ and $\xi=-1$, respectively. (Alternatively, one
    can also carry out a bottom-up approach, defining $\AbFunc_e$
    and $\AbFunc_o$, and then defining $\AbFunc_u$ as the
    pullback.)

    For each $v\in\two^n$, let $L_v$ be the complete resolution
    diagram formed by taking the $0$-resolution at the $i\th$ crossing
    if $v_i=0$, or the $1$-resolution otherwise. The diagram $L_v$ is
    a planar diagram of embedded circles and oriented arcs. Write
    $Z(L_v)$ for the set of circles in $L_v$. 

    For objects $v\in\two^n$, set $\AbFunc_u(v) =
    \Lambda_u(Z(L_v))$. For morphisms, first consider the following
    assignment $\AbFunc_u'$ on the edges; the actual functor
    $\AbFunc_u$ will be a slight modification of this
    assignment. Let $\phi_{v,w}$ be an edge in $\two^n$, so that its
    reverse $\phi_{w,v}^\op$ is a morphism in the opposite
    category.  Suppose $\phi_{w,v}^\op$ corresponds to a split
    cobordism, so that some circle $a\in Z(L_w)$ splits into two
    circles $a_1,a_2\in Z(L_v)$, and that the other elements of these
    two sets of circles are naturally identified. Suppose further that
    the arc in $L_v$ associated to this splitting is pointing from
    $a_1$ to $a_2$. Define
    \[
    \AbFunc_u'(\phi^\op_{w,v})(x) \;  = \; (a_1 + \xi a_2)\otimes x 
    \]
	where $\Lambda_u(Z(L_w))$ is viewed embedded in $\Lambda_u(Z(L_v))$ by sending $a$ to either $a_1$ or $a_2$.
    Now suppose instead we
    have a merge cobordism, so that two circles $a_1,a_2\in Z(L_w)$
    merge into one circle $a\in Z(L_v)$, and that the other elements
    in these two sets of circles are naturally identified. Define
    $\AbFunc'_u(\phi_{w,v}^\op)$ to be the
    $\ZZ_u$-algebra map $\Lambda_u(Z(L_w))\to
    \Lambda_u(Z(L_v))$ determined by sending $a_1$ and $a_2$ to $a$,
    and by the identity map on other circle generators.  The
    assignment $\AbFunc'_u$ on the edges does not commute
    across the 2-dimensional faces; rather, it does so only up to
    possible multiplication by $\xi$.  We correct the assignment
    $\AbFunc'_u$ on morphisms as follows.

    The two-dimensional configurations
    can be divided into four categories as follows (with unoriented
    arcs being orientable in either direction).
   \begin{equation}\label{diagram:type}
     \begin{split}
       A:\quad&
       \begin{tikzpicture}[scale=0.06,baseline={([yshift=-.8ex]current  bounding  box.center)}]
         \draw (0,0) circle (5cm);
         \draw (11,0) circle (5cm);
         \draw[thick,red] (-5,0) -- (5,0);
         \draw[thick,red] (11-5,0) -- (11+5,0);
       \end{tikzpicture},
       \begin{tikzpicture}[scale=0.06,baseline={([yshift=-.8ex]current  bounding  box.center)}]
         \draw (0,0) circle (5cm);
         \clip (0,0) circle (5cm);
         \draw[thick,red] (-5,0) circle (4cm);
         \draw[thick,red] (5,0) circle (4cm);
       \end{tikzpicture},
       \begin{tikzpicture}[scale=0.06,baseline={([yshift=-.8ex]current  bounding  box.center)}]
         \node[inner sep=0pt, outer sep=0pt,draw,shape=circle,minimum width=0.6cm,style={transform shape=False}] (a) at (0,0) {};
         \node[inner sep=0pt, outer sep=0pt,draw,shape=circle,minimum width=0.6cm,style={transform shape=False}] (b) at (15,0) {};
         \draw[thick,red,->] (a) to[out=30,in=150] (b);
         \draw[thick,red,->] (a) to[out=-30,in=-150] (b);
       \end{tikzpicture},
       \begin{tikzpicture}[scale=0.06,baseline={([yshift=-.8ex]current  bounding  box.center)}]
         \draw[thick,red] (-5,0) circle (4cm);
         \draw[fill=white] (0,0) circle (5cm);
         \draw[thick,red] (0,5) -- (0,-5);
       \end{tikzpicture}.
       \\
       C:\quad&
       \begin{tikzpicture}[scale=0.06,baseline={([yshift=-.8ex]current  bounding  box.center)}]
         \draw (0,0) circle (5cm);
         \draw (15,0) circle (5cm);
         \draw (30,0) circle (5cm);
         \draw[thick,red] (5,0) -- (10,0);
         \draw[thick,red] (20,0) -- (25,0);
       \end{tikzpicture},
       \begin{tikzpicture}[scale=0.06,baseline={([yshift=-.8ex]current  bounding  box.center)}]
         \draw (0,0) circle (5cm);
         \draw (15,0) circle (5cm);
         \draw[thick,red] (5,0) -- (10,0);
         \draw[thick,red] (15,5) -- (15,-5);
       \end{tikzpicture},
       \begin{tikzpicture}[scale=0.06,baseline={([yshift=-.8ex]current  bounding  box.center)}]
         \node[inner sep=0pt, outer sep=0pt,draw,shape=circle,minimum width=0.6cm,style={transform shape=False}] (a) at (0,0) {};
         \node[inner sep=0pt, outer sep=0pt,draw,shape=circle,minimum width=0.6cm,style={transform shape=False}] (b) at (15,0) {};
         \draw[thick,red,<-] (a) to[out=30,in=150] (b);
         \draw[thick,red,->] (a) to[out=-30,in=-150] (b);
       \end{tikzpicture},
       \begin{tikzpicture}[scale=0.06,baseline={([yshift=-.8ex]current  bounding  box.center)}]
         \draw (0,0) circle (5cm);
         \draw (15,0) circle (5cm);
         \draw (26,0) circle (5cm);
         \draw (41,0) circle (5cm);
         \draw[thick,red] (5,0) -- (10,0);
         \draw[thick,red] (31,0) -- (36,0);
       \end{tikzpicture},
       \begin{tikzpicture}[scale=0.06,baseline={([yshift=-.8ex]current  bounding  box.center)}]
         \draw (0,0) circle (5cm);
         \draw (11,0) circle (5cm);
         \draw (26,0) circle (5cm);
         \draw[thick,red] (16,0) -- (21,0);
         \draw[thick,red] (-5,0) -- (5,0);
       \end{tikzpicture}.
       \\
       X:\quad&
       \begin{tikzpicture}[scale=0.06,baseline={([yshift=-.8ex]current  bounding  box.center)}]
         \node[inner sep=0pt, outer sep=0pt,draw,shape=circle,minimum width=0.6cm,style={transform shape=False}] (a) at (0,0) {};
         \draw[thick,red,<-] (-5,0) --(5,0);
         \draw[thick,red,->] (a) to[out=150,in=90] (-10,0) to[out=-90,in=210] (a);
       \end{tikzpicture}.
       \\
       Y:\quad&
       \begin{tikzpicture}[scale=0.06,baseline={([yshift=-.8ex]current  bounding  box.center)}]
         \node[inner sep=0pt, outer sep=0pt,draw,shape=circle,minimum width=0.6cm,style={transform shape=False}] (a) at (0,0) {};
         \draw[thick,red,<-] (-5,0) --(5,0);
         \draw[thick,red,<-] (a) to[out=150,in=90] (-10,0) to[out=-90,in=210] (a);
       \end{tikzpicture}.
     \end{split}
   \end{equation}
   For the type-A faces, $\AbFunc'_u$ commutes after multiplication by
   $\xi$, for the type-C faces $\AbFunc'_u$ commutes directly, while
   for the type-X and type-Y faces, $\AbFunc'_u$ commutes, both
   directly, and after multiplication by $\xi$.  Define $\psi_X$
   (respectively, $\psi_Y$), an element of $\cellC^2([0,1]^n;\{1,\xi\})$, to be $\xi$ for the type-A or -X faces
   (respectively, type-A or -Y faces), and $1$ for the type-C or -Y
   faces (respectively, type-C or -X faces).

\begin{defn}\label{def:edgeass} 
  A type-X (respectively, type-Y) {\emph{edge assignment}} for the
  diagram $L$ with oriented crossings is a (multiplicative) cochain
  $\epsilon\in \cellC^1([0,1]^n;\{1,\xi\})$ such that
  $\delta\epsilon=\psi_X$ (respectively, $=\psi_Y$).
\end{defn}

Fix an edge assignment $\epsilon$, either of type-X or type-Y. For an edge $\phi_{w,v}^\op$, set
\[
\AbFunc_u(\phi_{w,v}^\op) = \epsilon(\phi_{w,v}^\op)\AbFunc'_u(\phi_{w,v}^\op)
\]
and this defines the functor $\AbFunc_u$.  Setting $\xi=+1$ and
$\xi=-1$ throughout the above construction defines the even and odd
functors $\AbFunc_e$ and $\AbFunc_o$, respectively.

The three Khovanov homology theories are then defined from these
functors as follows. First, we define for $\bullet \in \{u,e,o\}$ a
chain complex:
\[
\KhCx_\bullet(L) \; = \; \bigoplus_{v\in\two^n}
\AbFunc_\bullet(v) , \qquad\;\; \partial_\bullet \; = \;
\sum_{v\geqslant_1 w}
(-1)^{s_{v,w}}\,\AbFunc_\bullet(\phi_{w,v}^\op)
\]
(Here $s$ is the standard sign assignment from
Definition~\ref{def:signassign}.) This complex depends on the ordering
of the crossings, the choice of crossing orientations, and the choice
of edge assignment, as does the corresponding functor. However, these
choices do not affect the resulting chain homotopy type \cite[Theorem
1]{kho1}, \cite[Theorem 1.3]{ors}, \cite[\S7]{putyra}.

\begin{defn}
  For $\bullet\in\{u,e,o\}$ define $\Kh_\bullet(L) =
  H_\ast(\KhCx_\bullet(L),\partial_\bullet)$.
\end{defn}

The unified homology group $\unih(L)$ is a $\ZZ_u$-module,
while the even and odd theories are abelian groups. Each theory has a
bigrading, defined in the usual way, as follows. Let $n_-$ be the
number of negative crossings $\begin{tikzpicture}[scale=0.04,baseline={([yshift=-.8ex]current
    bounding box.center)}] \draw[->] (0,10) -- (10,0); \node[crossing] at
  (5,5) {}; \draw[->] (0,0) -- (10,10);
\end{tikzpicture}$ in the diagram $L$. The homological and quantum
gradings, denoted $i$ and $j$ respectively, are defined on
$x=a_1\otimes \cdots \otimes a_k\in\Lambda_\bullet(Z(L_v))$ with
$a_1,\dots,a_k\in Z(L_v)$ to be
\[
	i(x) \; = \; |v|-n_-,\qquad j(x) \; = \; |Z(L_v)|-2k+|v|+n-3n_-.
\]
We write $\Kh_\bullet^{i,j}(L)$ for the corresponding bigraded
module. We note that the unified theory in \cite{putyra} is called the
{\emph{covering homology}}, and is more specfically obtained from
Example 10.7 of loc.~cit.~by setting $X=Z=1$ and $Y=\xi$. The
terminology {\emph{unified}} is used in \cite{putyrashumakovitch}.

\begin{rmk}\label{rmk:edgeass}
  Our definition of an edge assignment is non-standard, but the
  standard type-X (respectively, type-Y) edge assignment from
  \cite{ors,putyra} may be obtained from our type-X (respectively,
  type-Y) edge assignment by multiplying by $(-1)^{s_{v,w}}$.
\end{rmk}

\subsection{Relations between the theories}

From the definitions, it is clear that the chain complexes
$\KhCx_\bullet(L)$ above fit into a pull-back diagram just as in
Diagram~\eqref{eq:pullback}, 
\begin{equation}
\begin{tikzpicture}[baseline={([yshift=-.8ex]current  bounding  box.center)},xscale=2.5,yscale=1.5]
\node (a0) at (0,0) {$\KhCx_u(L)$};
\node (b1) at (-0.5,-0.75) {$\KhCx_e(L)$};
\node (b2) at (0.5,-0.75) {$\KhCx_o(L)$};
\node (c0) at (0,-1.5) {$\KhCx_2(L)$};

\draw[->] (a0) -- (b1) node[pos=0.2,anchor=east] {\tiny$\xi=+1\;\;$};
\draw[->] (a0) -- (b2) node[pos=0.2,anchor=west] {\tiny$\;\;\xi=-1$};
\draw[->] (b1) -- (c0) node[pos=0.5,anchor=south] {};
\draw[->] (b2) -- (c0) node[pos=0.5,anchor=east] {};

\end{tikzpicture}
\end{equation}
where $\KhCx_2(L)$ denotes Khovanov chain complex with $\ZZ_2$
coefficients. Indeed, we may define $\KhCx_u(L)$ to be the pullback of
$\KhCx_e(L)$ and $\KhCx_o(L)$ over $\KhCx_2(L)$,
\[
\KhCx_u(L)=\{(a,b)\in\KhCx_e(L)\oplus\KhCx_o(L)\mid a\equiv b\bmod{2}\},
\]
which then naturally inherits a $\ZZ_2$-action $\xi(a,b)=(a,-b)$.

We also have a short exact sequence of chain complexes
$0\to \KhCx_e(L)\to \KhCx_u(L)\to \KhCx_o(L)\to 0$. This may be viewed
as arising from tensoring the short exact sequence $0\to
\ZZ_e\to\ZZ_u\to\ZZ_o\to 0$ by the unified chain
complex $\KhCx_u(L)$ over $\ZZ_u$. There is a similar exact
sequence with $e$ and $o$ swapped. Passing to homology yields the
following long exact sequences, cf.~\cite{putyrashumakovitch}:
\begin{equation}\label{eq:les_oe}
	\cdots\to \Kh_e^{i,j}(L) \longrightarrow \Kh^{i,j}_u(L) \longrightarrow \Kh_o^{i,j}(L) \xrightarrow{\phi_{oe}} \Kh_e^{i+1,j}(L) \to\cdots
\end{equation}
\begin{equation}\label{eq:les_eo}
	\cdots\to \Kh_o^{i,j}(L) \longrightarrow \Kh^{i,j}_u(L) \longrightarrow \Kh_e^{i,j}(L) \xrightarrow{\phi_{eo}} \Kh_o^{i+1,j}(L) \to\cdots
\end{equation}

\subsection{Reduced theories} Let $p$ be a basepoint on the planar
diagram $L$. Then the reduced complex $\widetilde{\KhCx}_\bullet(L,p)$
for $\bullet\in \{u,e,o\}$ is the subcomplex of $\KhCx_\bullet(L)$
consisting of elements of the form $a\otimes y$, where $a$ is a circle
containing $p$ in a resolution diagram, and $y$ is any other
element. The complex $\widetilde{\KhCx}_\bullet(L)$ is homologically
graded as a subcomplex of $\KhCx_\bullet(L)$, but there is a shift in its
quantum grading, which is defined as \emph{one more} 
than the formula for $j(x)$ above. The reduced functors
$\widetilde{\AbFunc}_\bullet$ are defined in the same way.

The chain homotopy type depends on the isotopy type of $L$ and which
component of the link $p$ lies in. In the odd case, the basepoint does
not matter, and the chain complex is a direct sum \cite[Prop
1.7]{ors}:
\begin{equation}
	\KhCx_o^{*,j}(L) \; = \; \widetilde{\KhCx}_o^{*,j-1}(L) \oplus \widetilde{\KhCx}_o^{*,j+1}(L) \label{eq:splitting_hom}
\end{equation}
In contrast to the odd case, the unified and even theories do not
split into a direct sum of their reduced theories. Instead, for
$\bullet\in\{u,e\}$, there is a short exact sequence of chain
complexes
\[
0\longrightarrow \widetilde{\KhCx}_\bullet^{*,j+1}(L,p)
\longrightarrow \KhCx^{*,j}_\bullet(L) \longrightarrow
\widetilde{\Kh}_\bullet^{*,j-1}(L,p) \longrightarrow 0.
\]

The reduced Khovanov homology is defined as
\begin{defn}
  For $\bullet\in\{u,e,o\}$ define $\widetilde{\Kh}^{i,j}_\bullet(L,p)
  = H_i(\widetilde{\KhCx}^{\ast,j}_\bullet(L,p),\partial_\bullet)$.
\end{defn}

\subsection{Khovanov generators}\label{subsec:kg} In the sequel, we
will need to fix bases of these chain complexes to facilitate the
construction of the various Khovanov spectra. For even Khovanov
homology, there is a natural basis of generators: the elements
$a_1\otimes \cdots \otimes a_k \in \Lambda_e(Z(L_v))$ where each
$a_i\in Z(L_v)$ is distinct. Since $\Lambda_e(Z(L_v))$ is the quotient
of a symmetric algebra on these generators, their order does not
matter. For the unified and odd cases, order matters, however: recall
that $a_1\otimes a_2 = \xi a_2 \otimes a_1$ in the unified case, and
$a_1\otimes a_2 = - a_2 \otimes a_1$ in the odd case. To fix
generators, we will thus fix at every vertex $v\in\two^n$ a
total ordering $>$ of the set $Z(L_v)$. Once this is done, we write
\[
\KhGen(v) \; = \; \KhGen_\bullet(v) \; := \; \{ a_{1}\otimes \cdots
\otimes a_{k} : \; a_i \in Z(L_v), \; a_1 > \cdots > a_k \} \qquad
\bullet \in \{u,e,o\}
\]
for the set of {\emph{Khovanov generators at $v$}}. As indicated, we
will often omit the subscript $\bullet$ from the notation, as each of
the three sets for a fixed $v$ are naturally identified (once the
circles in $Z(L_v)$ are totally ordered). Note that in the unified
case, the set of Khovanov generators over all $v\in\two^n$
gives a $\ZZ_u$-basis for the chain complex. On the other hand,
a $\ZZ$-basis for the unified chain complex at $v\in\two^n$ is given by
\[
	\KhGen(v) \;\amalg\; \xi \KhGen(v)
\]
where $\xi \KhGen(v)$ is the set of $\xi x$ with $x\in
\KhGen(v)$. Note that $\KhGen(v)$ has $2^{|Z(L_v)|}$ elements. Given a
basepoint $p$ on our diagram $L$, we can also form the set of reduced
generators $\widetilde{\KhGen}(v;p)$ at the vertex $v$, the subset of
$\KhGen(v)$ whose elements each include the circle containing the
basepoint. This set has half the number of elements of $\KhGen(v)$,
and, running over all $v\in\two^n$, forms a basis for the
reduced complex $\widetilde{\KhCx}_\bullet(L,p)$ where $\bullet \in
\{e,o\}$, and a $\ZZ_u$-basis for $\widetilde{\KhCx}_u(L,p)$.

\section{Burnside categories and functors} \label{sec:burn}

In this section, we review the definition of the Burnside category
$\burn$ from \cite{lls1,lls2}, and define some slight modifications:
the signed Burnside category $\oddb$ and the $\ZZ_2$-equivariant
Burnside category $\eqb$. We then discuss functors from the cube
category $\two^n$ to these Burnside categories.

\subsection{The Burnside category}\label{subsec:burnside}
Given finite sets $X$ and $Y$, a correspondence from $X$ to $Y$ is a
triple $(A,s,t)$ for a finite set $A$, where $s,t$ are set maps $s
\from A\to X$ and $t \from A \to Y$; $s$ and $t$ are called the
\emph{source} and \emph{target} maps, respectively. The correspondence
$(A,s,t)$ is depicted:
\[
\begin{tikzpicture}[scale=1]
\node (X) at (-2,0) {$X$};
\node (Y) at (2,0) {$Y$};
\node (A) at (0,1) {$A$};
\draw[->] (A) -- (X) node[midway, above] {$s_A\;\;\;$};
\draw[->] (A) -- (Y) node[midway,above] {$\;\;t_A$};
\end{tikzpicture}
\]

For correspondences $(A,s_A,t_A)$ and $(B,s_B,t_B)$ from $X$ to $Y$
and $Y$ to $Z$, respectively, define the composition $(B,s_B,t_B)\circ
(A,s_A,t_A)$ to be the correspondence $(C,s,t)$ from $X$ to $Z$ given
by the fiber product $C=B \times_Y A =\{ (b,a) \in B \times A \mid
t(a) = s(b)\}$ with source and target maps $s(b,a)=s_A(a)$ and
$t(b,a)=t_B(b)$. There is also the identity correspondence from a set
$X$ to itself, i.e., $(X,\Id_X,\Id_X)$ from $X$ to $X$.  Given
correspondences $(A,s_A,t_A)$, $(B,s_B,t_B)$ from $X$ to $Y$, a
morphism of correspondences $(A,s_A,t_A)$ to $(B,s_B,t_B)$ is a
bijection $f \from A \to B$ commuting with the source and target maps.
There is also the identity morphism from a correspondence to itself.

Composition (of set maps) gives the set of correspondences from $X$ to
$Y$ the structure of a category.  Define the \emph{Burnside category}
$\burn$ to be the weak $2$-category whose objects are finite sets,
morphisms are finite correspondences, and $2$-morphisms are maps of
correspondences.

Recall that in a weak $2$-category, that arrows need only be
associative up to an equivalence, and similarly the identity axiom
holds only after composing with a $2$-morphism.  To be explicit, for
finite sets $X,Y$ and $(A,s,t)$ a correspondence from $X$ to $Y$,
neither $(Y,\Id_Y,\Id_Y) \circ (A,s,t)$, nor $(A,s,t)\circ
(X,\Id_X,\Id_X)$, equals $(A,s,t)$. Rather, there are natural
$2$-morphisms, left and right unitors,
\[
\lambda\from Y \times_Y A \to A, \qquad \rho\from A \times_X X \to A
\]
given by $\lambda(y,a)=a$ and $\rho(a,x)=a$. Further, the composition
$C\circ (B\circ A)$, for $A$ from $W$ to $X$, $B$ from $X$ to $Y$, and
$C$ from $Y$ to $Z$, is not identical to $(C \circ B) \circ A$, rather
there is an associator
\[
\alpha \from (C \times_Y B)\times_X A \to C \times_Y (B \times_X A)
\]
given by $\alpha((c,b),a)=(c,(b,a))$. The categories to follow are
slight variations of this one. The total diagram of Burnside
categories that we will consider in this article is depicted in Figure
\ref{fig:burnsidecategories}.

\subsection{The signed Burnside category} \label{subsec:oddburn} 
Given sets $X$ and $Y$, a \emph{signed correspondence} is a
correspondence $(A,s_A,t_A)$ equipped with a map
$\sigma_A \from A \to \{+1,-1\},$ regarded as a tuple
$(A,s_A,t_A,\sigma_A)$; we call $\sigma_A$ the ``sign'' or ``sign map''
of the signed correspondence:
\[
\begin{tikzpicture}[scale=1]
\node (X) at (-2,0) {$X$};
\node (Y) at (2,0) {$Y$};
\node (A) at (0,1) {$A$};
\node (S) at (0,2.5) {$\{+1,-1\}$};
\draw[->] (A) -- (X) node[midway, above] {$s_A\;\;\;$};
\draw[->] (A) -- (Y) node[midway,above] {$\;\;t_A$};
\draw[->] (A) -- (S) node[midway,left] {$\sigma_A$};
\end{tikzpicture}
\]

In the sequel we often write ``correspondence'' for ``signed
correspondence'', where it will not cause any confusion.  We define a
composition $(B,s_B,t_B,\sigma_B)\circ (A,s_A,t_A,\sigma_A)$ of signed
correspondences $(A,s_A,t_A,\sigma_A)$ from $X$ to $Y$, and
$(B,s_B,t_B,\sigma_B)$ from $Y$ to $Z$ by $(C,s,t,\sigma)$, where
$(C,s,t)$ is the composition $(B,s_B,t_B) \circ (A,s_A,t_A)$ and
$\sigma(b,a)=\sigma_B(b)\sigma_A(a)$.  Also, we define the identity
(signed) correspondence by $(X, \Id_X, \Id_X, 1)$ (i.e., the identity
correspondence takes value $1$ on all elements).

We define maps of signed correspondences $f \from (A,s_A,t_A,\sigma_A)
\to (B,s_B,t_B,\sigma_B)$ to be morphisms of correspondences $f \from
(A,s_A,t_A) \to (B,s_B,t_B)$ such that $\sigma_B\circ f=\sigma_A$.  We
may then define the \emph{signed Burnside category} $\oddb$ to be the
weak $2$-category with objects finite sets, morphisms given by signed
correspondences, and $2$-morphisms given by maps of signed
correspondences.  The structure maps $\lambda, \rho,\alpha$ of
\S\ref{subsec:burnside} are easily seen to respect the sign,
confirming that $\oddb$ is indeed a weak $2$-category. There is a
forgetful 2-functor $\forgot\from \oddb \to \burn$ which forgets
signs. There is also an inclusion-induced 2-functor $\subinclude:\burn
\to \oddb$.  We will usually refer to such 2-functors simply as
functors.

\subsection{The $\ZZ_2$-equivariant Burnside category} \label{subsec:equivburn}

We let $\eqb$ denote the $2$-category whose objects are finite, free
$\ZZ_2$-sets, with $\ZZ_2$-equivariant correspondences,
and $2$-morphisms $\ZZ_2$-equivariant bijections of
correspondences. (Recall that we write $\ZZ_2=\{1,\xi\}$.) The
2-category $\eqb$ is a subcategory of the Burnside 2-category for the
group $\ZZ_2$.

There is a forgetful functor $\forgot\from \eqb \to \burn$.  There is
also a ``quotient'' functor $\quotient\from \eqb \to \burn$, which
simply takes the quotient by the action of $\ZZ_2$ on objects,
$1$- and $2$-morphisms.

There is also a strictly unitary ``doubling'' 2-functor $\oddeq\from
\oddb \to \eqb$ consisting of the following data.
\begin{enumerate}[leftmargin=*]
\item For each object $X$ of $\oddb$, we need to specify an object
  $\oddeq(X)$ of $\eqb$. Define $\oddeq(X)=\{1,\xi\}\times X$, with
  the $\ZZ_2$-action on $\{1,\xi\}\times X$ being
  $\xi(1,x)=(\xi,x)$ for all $x\in X$.
\item For any 2 objects $X,Y$ of $\oddb$, we need to specify a
  functor, also denoted $\oddeq$, from $\Hom_{\oddb}(X,Y)$ to
  $\Hom_{\eqb}(\oddeq(X),\oddeq(Y))$ that sends $\Id_X$ to
  $\Id_{\oddeq(X)}$. This functor sends a signed correspondence $A$
  from $X$ to $Y$ to the correspondence $\{1,\xi\}\times A$ from
  $\{1,\xi\}\times X$ to $\{1,\xi\}\times Y$ (the $\ZZ_2$-action is
  similar). The source and target maps on $\{1\}\times A$ are defined
  as
  \begin{align*}
    s(1,a)&=(1,s(a))\\
    t(1,a)&=(\sigma(a),t(a))
  \end{align*}
  (The sign $\sigma(a)$ takes value in
  $\ZZ_2=\{+1,-1\}$, which has been identitified with
  $\ZZ_2=\{1,\xi\}$.)  The source and target maps are then extended
  equivariantly to $\{\xi\}\times A$. Finally,
  $\oddeq$ sends a $2$-morphism $f\from A\to B$ to the
  $2$-morphism $(\Id,f)\from\{1,\xi\}\times A\to\{1,\xi\}\times B$.
\item Finally, for any correspondences $A$ from $X$ to $Y$ and $B$
  from $Y$ to $Z$ in $\oddb$, we need to specify a 2-morphism in
  $\eqb$ from $\oddeq(B)\circ\oddeq(A)=(\{1,\xi\}\times
  B)\times_{(\{1,\xi\}\times Y)}(\{1,\xi\}\times A)$ to $\oddeq(B\circ
  A)=\{1,\xi\}\times (B\times_Y A)$ that is natural in $A$ and
  $B$. Define it to be
  \[
    ((\sigma(a),b),(1,a))\mapsto (1,(b,a)),
  \]
  extended $\ZZ_2$-equivariantly. It is not hard to check that these
  maps satisfy the required coherence relations with the structure
  maps $\lambda,\rho,\alpha$ of $\oddb$ and $\eqb$, as described in
  \cite[Definition 4.1 and Remark
  4.2]{Benabou-other-bicategories}. Therefore, the above is indeed a
  $2$-functor.
\end{enumerate}

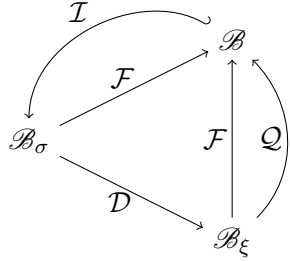
\begin{figure}
\centering
\begin{tikzpicture}[scale=1.35]
\node (b) at (0,1) {$\burn$};
\node (ob) at (-2,0) {$\oddb$};
\node (eqb) at (0,-1) {$\eqb$};

\draw[left hook->] (b) to[out=150,in=90] node[midway,anchor=south east,inner sep=2pt, outer sep=0pt] {$\subinclude$} (ob) ;
\draw[->] (ob) -- node[midway,anchor=south east,inner sep=0pt, outer sep=0pt] {$\forgot$} (b) ;
\draw[->] (ob) -- node[midway,anchor=north east,inner sep=0pt, outer sep=0pt] {$\oddeq$} (eqb) ;
\draw[->] (eqb) -- node[midway,anchor=east,inner sep=2pt, outer sep=0pt] {$\forgot$} (b) ;
\draw[->] (eqb) to[out=45,in=-45] node[midway,anchor=east,inner sep=2pt, outer sep=0pt] {$\quotient$} (b) ;

\end{tikzpicture}
\caption{\textbf{The Burnside categories and some functors between
    them.} We have the relations $\quotient\circ \oddeq = \forgot$ and
  $\forgot\circ\subinclude = \Id$. The $\forgot$'s are forgetful
  functors, $\subinclude$ is a subcategory inclusion, $\quotient$ is a
  quotient functor, and $\oddeq$ stands for doubling.
}\label{fig:burnsidecategories}
\end{figure}

\subsection{Functors from Burnside categories} \label{subsec:2funcs}
For $\bullet\in \{\varnothing,\sigma\}$ we define a functor
$\Abelianize\from \burn_\bullet \to\ZZ\Mod$ by sending a
set $X\in \burn_\bullet$ to the free abelian group
generated by $X$, denoted $\Abelianize(X)$.  For a signed
correspondence $\phi = (A,s,t,\sigma)$ from $X$ to $Y$, we define
$\Abelianize(\phi)\from\Abelianize(X)\to \Abelianize(Y)$ by
\begin{equation}
  \Abelianize(\phi)(x)=\sum_{a\in A\,\mid\,s(a)=x}\sigma(a)t(a)\label{eq:amorph}
\end{equation}
for elements $x \in X$, extended linearly over $\ZZ$.
Similarly, we have a functor $\Abelianize\from\eqb\to
\ZZ_u\Mod$ that sends a free $\ZZ_2$-set $X$ to
$\Abelianize(X)$, which is a free $\ZZ_u$-module; for a
$\ZZ_2$-equivariant correspondence $\phi$, we use the same
formula as Equation~\eqref{eq:amorph}, but exclude $\sigma$, and extend
linearly over $\ZZ_u$.

\subsection{Functors to Burnside categories} We now consider functors from the
cube category $\two^n$ to the Burnside categories thus far
introduced. We let $\burn_\bullet$ be one of the Burnside
categories introduced above, appearing in Figure
\ref{fig:burnsidecategories}, so that
$\bullet\in\{\varnothing,\sigma,\xi\}$. The functors
$F\from\two^n\to \burn_\bullet$ we consider will be strictly unitary
2-functors; that is, they will consist of the following data:
\begin{enumerate}[leftmargin=*]
\item For each vertex $v$ of $\two^n$, an object
  $F(v)$ of $\burn_\bullet$.
\item For any $u\geq v$, a 1-morphism $F(\phi_{u,v})$ in
  $\burn_\bullet$ from $F(u)$ to $F(v)$, such that $F(\phi_{u,u})$
  is the identity morphism $\Id_{F(u)}$.
\item Finally, for any $u\geq v\geq w$, a 2-morphism $F_{u,v,w}$ in
  $\burn_\bullet$ from $F(\phi_{v,w})\circ F(\phi_{u,v})$ to
  $F(\phi_{u,w})$ that agrees with $\lambda$ (respectively, $\rho$)
  when $v=w$ (respectively, $u=v$), and that satisfies, for any
  $u\geq v\geq w\geq z$,
  \[
    F_{u,w,z}\circ_2 (\Id\circ
    F_{u,v,w})=(F_{v,w,z}\circ\Id)\circ_2 F_{u,v,z}
  \]
  (with $\circ$ denoting composition of 1-morphisms and $\circ_2$
  denoting composition of 2-morphisms; and we have suppressed the
  associator $\alpha$).
\end{enumerate}
We will usually use the characterization of these functors in the
lemma to follow.

\begin{lem}\label{lem:212} Let $\burn_\bullet$ be any of the
  Burnside categories with $\bullet\in \{\varnothing,\sigma,\xi\}$.
  Consider the data of objects $F(v)$ for $v \in \two^n$,
  1-morphisms $F(\phi_{v,w})$ for edges $v\geqslant_1 w$, and 2-morphisms
  $F_{u,v,v',w} \from F(\phi_{v,w}) \circ F(\phi_{u,v}) \to
  F(\phi_{v',w}) \circ F (\phi_{u,v'})$ for each 2d face described by
  $u\geqslant_1 v,v'\geqslant_1 w$, such that the following
  compatibility conditions are satisfied:
  \begin{enumerate}[leftmargin=*]
  \item For any 2d face $u,v,v',w$ as above, $F_{u,v,v',w}=F^{-1}_{u,v',v,w}$;
  \item For any 3d face in $\two^n$ on the left, the hexagon on
    the right commutes: \label{cond:c2}
  \end{enumerate}
\[
\begin{tikzpicture}[node distance=2.5cm,
  back line/.style={densely dotted},
  cross line/.style={preaction={draw=white, -, line width=6pt}},baseline={(current  bounding  box.center)}]
  \node (u) {$u$};
  \node (v') [right of=u] {$v'$};
  \node [below of=u] (v'') {$v''$};
  \node [right of=v''] (w) {$w$};
  \node (v) [right of=u, above of=u, node distance=1cm] {$v$};
  \node (w'') [right of=v] {$w''$};
  \node (w') [below of=v] {$w'$};
  \node (z) [right of=w'] {$z$};
  \draw[back line, ->] (v) to (w');
  \draw[back line, ->] (v'') to (w');
  \draw[back line, ->] (w') to (z);
  \draw[->] (w'') to (z);
  \draw[cross line, ->] (u) to (v);
  \draw[cross line, ->] (u) to (v');
  \draw[cross line, ->] (u) to (v'');
  \draw[cross line, ->] (v') to (w'');
  \draw[cross line, ->] (v'') to (w);
  \draw[cross line, ->] (v) to (w'');
  \draw[cross line, ->] (w) to (z);
  \draw[cross line, ->] (v') to (w);
\end{tikzpicture}\qquad\qquad
\begin{tikzpicture}[scale=0.6,baseline={(current  bounding  box.center)}]
  \def\radius{3.7cm} 
  \node (h0A) at (60:\radius)   {$\circ$};
  \node (h0C) at (0:\radius)    {$\circ$};
  \node (h1B) at (-60:\radius)  {$\circ$};
  \node (h1A) at (-120:\radius) {$\circ$};
  \node (h1C) at (180:\radius)  {$\circ$};
  \node (h0B) at (120:\radius)  {$\circ$};
  
  \draw[->]
  (h0A) edge node[auto] {$\Id\times F_{u,v,v'',w'}$} (h0C)
  (h1B) edge node[right] {$F_{v'',w',w,z}\times\Id$} (h0C)
  (h1A) edge node[auto] {$\Id\times F_{u,v'',v',w}$} (h1B)
  (h1C) edge node[left] {$F_{v',w,w'',z}\times\Id$} (h1A)
  (h1C) edge node[auto] {$\Id\times F_{u,v',v,w''}$} (h0B)
  (h0B) edge node[auto] {$F_{v,w'',w',z}\times\Id$} (h0A);
\end{tikzpicture}
\]
This collection of data can be extended to a strictly unitary functor
$F\from\two^n\to\burn_\bullet$, uniquely up to natural
isomorphism, so that $F_{u,v,v',w}=F_{u,v',w}^{-1}\circ_2 F_{u,v,w}$.
\end{lem}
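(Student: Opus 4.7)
The plan is to extend the given data by induction on the length $|u|-|v|$, using a distinguished monotone path from $u$ to $v$ in $\two^n$ as the ``canonical'' path, and then to define all required $2$-morphisms via reduction to this canonical path through the face $2$-morphisms.

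More concretely, I would first fix, for each pair $u\geq v$, the lexicographic monotone path $u=u_0\geqslant_1 u_1\geqslant_1\cdots\geqslant_1 u_k=v$ in which at each step the largest available index $i$ with $(u_\ell)_i=1$ and $v_i=0$ is flipped. Define
\[
F(\phi_{u,v}) \; := \; F(\phi_{u_{k-1},u_k}) \circ \cdots \circ F(\phi_{u_0,u_1})
\]
(with implicit associators $\alpha$). For any other monotone path $\pi$ from $u$ to $v$, observe that $\pi$ is a sequence of edges indexed by a permutation of the flipped coordinates, and that $\pi$ can be transformed into the canonical path by a finite sequence of adjacent transpositions of edges. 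Each such transposition is exactly the data of a 2d face $u_\ell\geqslant_1 v,v'\geqslant_1 u_{\ell+2}$, and the given $F_{u_\ell,v,v',u_{\ell+2}}$ provides a 2-isomorphism between the corresponding composites. Composing these 2-isomorphisms yields a candidate 2-isomorphism $\Psi_\pi$ from the composition along $\pi$ to $F(\phi_{u,v})$.

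The main step is to verify that $\Psi_\pi$ is independent of the choice of sequence of adjacent transpositions used to reduce $\pi$ to the canonical path. Two such sequences differ by applications of the braid-like relation between adjacent transpositions and the relation $\tau^2=\mathrm{id}$; these correspond in our setting exactly to the hexagon condition on 3d faces (the braid relation) and the inverse condition $F_{u,v,v',w}=F_{u,v',v,w}^{-1}$ (the involution relation). Since the symmetric group on $k$ letters has a presentation by adjacent transpositions with precisely these two families of relations (plus commutativity of disjoint transpositions, which is automatic because such transpositions act on edge-disjoint 2d faces within disjoint 3d subcubes), the hypotheses (1) and (2) guarantee that $\Psi_\pi$ is well-defined. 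Once we have $\Psi_\pi$, define $F_{u,v,w}$ as the 2-morphism from $F(\phi_{v,w})\circ F(\phi_{u,v})$ to $F(\phi_{u,w})$ obtained by taking $\pi$ to be the length-$k$ path that first decreases $u$ to $v$ along its canonical path and then $v$ to $w$ along its canonical path, so that $F_{u,v,w}:=\Psi_\pi$; the prescribed formula $F_{u,v,v',w}=F^{-1}_{u,v',w}\circ_2 F_{u,v,w}$ then follows from the multiplicativity of $\Psi$ under concatenation of reductions.

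The final step is to check the higher coherence: that $F$ is a strictly unitary 2-functor, i.e., that for any $u\geq v\geq w\geq z$ the pentagon relation
\[
F_{u,w,z}\circ_2 (\Id\circ F_{u,v,w}) \; = \; (F_{v,w,z}\circ\Id)\circ_2 F_{u,v,z}
\]
holds. This reduces, after unwinding definitions, to a statement comparing two $\Psi_\pi$'s for a single monotone path $\pi$ from $u$ to $z$, obtained by two different blockings of adjacent-transposition sequences, and hence follows from the well-definedness proved above. Uniqueness up to natural isomorphism is automatic from strict unitarity together with the prescribed values on edges and 2d faces, since any two such extensions agree on all generating data and the compatibility forces the induced natural isomorphism. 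I expect the main obstacle to be a careful bookkeeping of the associators $\alpha$ that are suppressed in the hexagon diagram, so that the braid-word argument above genuinely yields 2-isomorphisms in the weak sense, rather than on the nose.
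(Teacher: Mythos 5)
Your proof is correct and follows essentially the same path-reduction approach as the references the paper cites (the paper's own proof is the one-line deferral to Lemma~2.12 of \cite{lls1} and Lemma~4.2 of \cite{lls2}): reduce arbitrary monotone paths to a canonical one via adjacent transpositions, with well-definedness coming from the Coxeter presentation of $S_k$, matching the braid relation to the hexagon, $\tau^2=1$ to the involution condition, and distant commutativity to the 2-categorical structure. One small refinement: the commutativity of the $2$-morphisms attached to disjoint (distant) transpositions is ensured by the interchange law of a weak $2$-category, not by any containment in disjoint 3d subcubes.
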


\begin{proof}
  The proof is same as that of \cite[Lemma 2.12]{lls1} and \cite[Lemma
  4.2]{lls2}.
\end{proof}

\subsection{Totalization} Given a functor
$F\from\two^n \to \burn_\bullet$ we construct a chain complex
denoted $\Tot(F)$, and called the {\emph{totalization}} of the
functor $F$. The underlying chain group of $\Tot(F)$ is given by
\[
\Tot(F) \;= \; \bigoplus_{v \in \two^n} \Abelianize(F(v)).
\]
We set the homological grading of the summand $\Abelianize(F(v))$ to
be $|v|$. The differential is given by defining the components
$\partial_{u,v}$ from $\Abelianize(F(u))$ to $\Abelianize(F(v))$ by
\[
\partial_{u,v}=\begin{cases}
(-1)^{s_{u,v}}\Abelianize(F(\phi_{u,v})) & \mbox{if } u\geqslant_1 v \\
0 &\mbox{otherwise.}
\end{cases}
\]
Note that for a functor $F\from\two^n\to \eqb$, the totalization
$\Tot(F)$ is a chain complex over $\ZZ_u$.

\subsection{Natural transformations}\label{sec:nat-transform-burn}
The following will serve as the
basic relation between functors from the cube to a Burnside
category. As before, $\bullet\in\{\varnothing,\sigma,\xi\}$.

\begin{defn}\label{def:nattrans}
  A \emph{natural transformation} $\eta \from F_1 \to F_0$ between
  $2$-functors $F_1,F_0\from \two^n \to \burn_\bullet$ is a
  strictly unitary $2$-functor
  $\eta\from \two^{n+1} \to \burn_\bullet$ so that
  $\eta|_{\{1\}\times \two^n}=F_1$ and
  $\eta|_{\{0\}\times \two^n}=F_0$.
\end{defn}

A natural transformation (functorially) induces a chain map between
the chain complexes of Burnside functors, which we write as
$\Tot(\eta)\from\Tot(F_1)\to \Tot(F_0)$. 

Many of the natural transformations we will encounter will be
sub-functor inclusions or quotient functor surjections. Given a
functor $F\from\two^n\to\burn_\bullet$, a \emph{sub-functor}
(respectively, \emph{quotient functor}) $G\from\two^n\to\burn_\bullet$
is a functor that satisfies:
\begin{enumerate}[leftmargin=*]
\item $G(v)\subset F(v)$ for all $v\in\two^n$; if $\bullet=\xi$, the
  $\ZZ_2$-action on $G(v)$ is induced from the $\ZZ_2$-action on
  $F(v)$.
\item $G(\phi_{u,v})\subset F(\phi_{u,v})$ for all $u\geq v$, with the
  source and target maps (and the sign map if $\bullet=\sigma$ or the
  $\ZZ_2$-action if $\bullet=\xi$) being restrictions of the
  corresponding ones on $F(\phi_{u,v})$.
\item $s^{-1}(x)\subset G(\phi_{u,v})$ (respectively,
  $t^{-1}(y)\subset G(\phi_{u,v})$) for all $u\geq v$ and for all $x\in
  G(u)$ (respectively, $y\in G(v)$).
\end{enumerate}
If $G$ is a sub (respectively, quotient) functor of $F$, then there is
a natural transformation $G\to F$ (respectively, $F\to G$), and the
induced chain map $\Tot(G)\to \Tot(F)$ (respectively,
$\Tot(F)\to\Tot(G)$) is an inclusion (respectively, a quotient
map). 
\begin{defn}\label{def:burn-cofib-sequence}
  If $G$ is a sub-functor of $F\from\two^n\to\burn_\bullet$, then the
  functor $H$ defined as $H(v)=F(v)\setminus G(v)$ and
  $H(\phi_{u,v})=\cup_{y\in H(v)}t^{-1}(y)\subset
  F(\phi_{u,v})\setminus G(\phi_{u,v})$ is a
  quotient functor of $F$ (and vice-versa). Such a sequence
  \[
  G\to F\to H
  \]
  is called a \emph{cofibration sequence} of Burnside functors; it
  induces the short exact sequence
  \[
  0\to\Tot(G)\to\Tot(F)\to\Tot(H)\to0
  \]
  of chain complexes.
\end{defn}

The following is another particular example of a natural
transformation which will appear later. Suppose we are given a functor
$F\from\two^n\to\oddb$. For each object $v\in\two^n$, choose a
function $\zeta_v\from F(v)\to \{+1,-1\}$. Define a new functor
$F'\from\two^n\to\oddb$ that is equal to $F$ except that in the
correspondence $F(\phi_{v,w})=(A,s,t,\sigma)$ for $v\geqslant_1 w$ we
change the sign function $\sigma$ to be
$\sigma'(x)=\zeta_v(s(x))\zeta_w(t(x))\sigma(x)$. There is a naturally
induced natural transformation $\eta\from F\to F'$:

\begin{defn}\label{def:signchange}
A \emph{sign reassignment} $\eta$ of $F\from \two^n \to \oddb$ is a natural transformation $\eta\from F\to F'$ as described above, induced by a function $\zeta_v\from F(v) \to \{ \pm 1\}$ for each $v\in \two^n$.
\end{defn}

In the context of Morse theory, a sign reassignment as above
corresponds to changing the orientation on the stable tangent bundle
to a critical point in Morse theory. In the sequel, the appearance of
sign reassignments will be specific to odd Khovanov homology. Such
reassignments are not necessary in the (even) Khovanov setting, since
in that case there is a preferred choice of signs: the (even) Khovanov
complex comes equipped with a choice of generators for which all signs
in the differentials, apart from the standard sign assignment, are
positive, cf.~\S\ref{subsec:kg}. In the odd Khovanov setting, in
which there are generally no such positive bases, sign reassignments
inevitably appear.

\subsection{Stable equivalence of functors} In the sequel, we will be
interested not just in functors
$F\from \two^n\to \burn_\bullet$, but {\emph{stable}} functors,
which are pairs $(F,r)$ with $r\in\ZZ$. We will write
$\Sigma^r F$ for the pair $(F,r)$; its totalization is defined to be
$\Sigma^r\Tot(F)=\Tot(F)[r]$, that is, the chain complex $\Tot(F)$
shifted up by $r$. In this section we describe when two such stable
functors are equivalent, only slightly modifying \cite[Definition
5.9]{lls2} by allowing $\bullet\in\{\varnothing,\sigma,\xi\}$.

A \emph{face inclusion} $\iota$ is a functor $\two^n \to \two^N$ that
is injective on objects and preserves the relative gradings. We remark
that self-equivalences $\iota\from \two^n \to \two^n$ are face
inclusions. Now consider a face inclusion $\iota\from \two^n \to
\two^N$ and a functor $F \from \two^n \to \burn_\bullet$. The
induced functor $F_\iota\from \two^N \to \burn_\bullet$ is
uniquely determined by requiring that $F=F_\iota\circ \iota$, and such
that for $v\in \two^N/ \iota(\two^n)$, we have
$F_\iota(v)=\varnothing$. For a face inclusion $\iota$, we define
$|\iota|=|\iota(v)|-|v|$ for any $v\in\two^n$, which is
independent of $v$ because $\iota$ is assumed to preserve relative
gradings.  As observed in \cite[\S5]{lls2}, for any face inclusion
$\iota$ and functor $F$ as above,
\[
	\Tot(F_\iota) \; \cong \; \Sigma^{|\iota|}\Tot(F)
\]
where the isomorphism is natural up to certain sign choices. With this
background, we state the relevant notion of equivalence for stable
functors.

\begin{defn}\label{def:stableq}
  Two stable functors
  $(E_1 \from \two^{m_1} \to \burn_\bullet, q_1)$ and
  $(E_2\from \two^{m_2} \to \burn_\bullet, q_2)$ are \emph{stably
    equivalent} if there is a sequence of stable functors
  $\{(F_i \from \two^{n_i} \to \burn_\bullet, r_i)\}$
  ($0\leq i \leq \ell$) with $\Sigma^{q_1}E_1=\Sigma^{r_0}F_0$ and
  $\Sigma^{q_2}E_2=\Sigma^{r_\ell}F_\ell$ such that for each pair
  $\{ \Sigma^{r_i}F_i, \Sigma^{r_{i+1}}F_{i+1}\}$, one of the
  following holds:
\begin{enumerate}[leftmargin=*]
\item $(n_i,r_i)=(n_{i+1},r_{i+1})$ and there is a natural
  transformation $\eta\from F_i\to F_{i+1}$ or
  $\eta\from F_{i+1} \to F_i$ such that the induced map $\Tot(\eta)$
  is a chain homotopy equivalence.
\item There is a face inclusion
  $\iota\from \two^{n_i} \hookrightarrow \two^{n_{i+1}}$ such that
  $r_{i+1}=r_i-|\iota|$ and $F_{i+1}=(F_i)_\iota$; or a face inclusion
  $\iota\from \two^{n_{i+1}} \hookrightarrow \two^{n_{i}}$ such that
  $r_{i}=r_{i+1}-|\iota|$ and $F_{i}=(F_{i+1})_\iota$.
\end{enumerate}
We call such a sequence, along with the arrows between
$\Sigma^{r_i}F_i$, a {\emph{stable equivalence}} between the stable
functors $\Sigma^{q_1}E_1$ and $\Sigma^{q_2}E_2$.  If
$\bullet=\sigma$, and if the sequence is such that the maps $\eta$
satisfy $\Tot(\oddeq \eta)$ are chain homotopy equivalences over
$\ZZ_u$ (where $\oddeq\from\oddb\to\eqb$ is from Figure
\ref{fig:burnsidecategories}), we call it a {\emph{equivariant
    (stable) equivalence}}, and say that $\Sigma^{q_i}E_i$ are
\emph{equivariantly equivalent}.
\end{defn}

We note that a stable equivalence from $\Sigma^{q_1}E_1$ to
$\Sigma^{q_2}E_2$ induces a chain homotopy equivalence
$\Tot(\Sigma^{q_1}E_1)\to \Tot(\Sigma^{q_2}E_2)$, well-defined up to
choices of inverses of the chain homotopy equivalences involved in its
construction, and an overall sign.  Note that for $\eqb$, the category
of chain complexes under consideration is over $\ZZ_u$.

We will also need the notion of a map of Burnside functors:

\begin{defn}\label{def:map-of-burnside-functors}
A \emph{map} $\Sigma^{q_1}E_1 \to \Sigma^{q_2}E_2$ of Burnside functors $\Sigma^{q_i}E_i \from \two^{m_i} \to \burn_\bullet$ consists of a sequence of stable functors
  $\{(F_i \from \two^{n_i} \to \burn_\bullet, r_i)\}$
  ($0\leq i \leq \ell$) with $\Sigma^{q_1}E_1=\Sigma^{r_0}F_0$ and
  $\Sigma^{q_2}E_2=\Sigma^{r_\ell}F_\ell$ along with the following:
  \begin{enumerate}[leftmargin=*]
  \item For $i$ even, a stable equivalence from $\Sigma^{r_i}F_i$ to
    $\Sigma^{r_{i+1}}F_{i+1}$.
  \item For $i$ odd, a natural transformation $F_i\to F_{i+1}$, and we
    require $r_i=r_{i+1}$.
  \end{enumerate}
 Similarly, for $\bullet=\sigma$, an \emph{equivariant map} $\Sigma^{q_1}E_1\to \Sigma^{q_2}E_2$ will consist of the same information, but where the stable equivalences are required to be equivariant equivalences.
\end{defn}

\subsection{Coproducts}\label{subsec:prod} Finally, let us describe
the elementary coproduct operation on functors
$\two^n\to \burn_\bullet$, generalizing that from
\cite{lls2}. \cite{lls2} also define a product operation, but we have
no need for that.

Given two 2-functors $F_1, F_2\from \two^n \to \burn_\bullet$ the
\emph{coproduct} 2-functor $F_1\amalg F_2\from\two^n\to\burn_\bullet$
is defined as follows. On objects and 1-morphisms, $F_1\amalg F_2$ is
just the disjoint union: $(F_1 \amalg F_2) (v) = F_1(v) \amalg F_2(v)$
for $v\in\two^n$, with $\ZZ_2$-action defined component-wise if
$\bullet=\xi$, and $(F_1 \amalg F_2)(\phi_{u,v})=F_1(\phi_{u,v})\amalg
F_2(\phi_{u,v})$ for $u\geq v$, with the source map, target map, and sign map
if $\bullet=\sigma$, and $\ZZ_2$-action if $\bullet=\xi$, defined
component-wise. For $u\geq v\geq w$, the associated 2-morphism may be
viewed as a map
\[
  (F_1\amalg
  F_2)_{u,v,w}\from\coprod_{i=1,2}F_i(\phi_{v,w})\times_{F_i(v)}
  F_i(\phi_{u,v}) \longrightarrow \coprod_{i=1,2}
  F_i(\phi_{u,w})
\]
and it is defined component-wise using the bijections $(F_i)_{u,v,w}$
for $i=1,2$.  We have the following immediate property for chain
complexes:
\[
	\Tot(F_1 \amalg F_2)=\Tot(F_1)\oplus \Tot(F_2).
\]

\section{Realizations of Burnside functors}\label{sec:box}

In this section, given a functor $F\from \two^n\to\burn_\bullet$ to
any of the previously defined Burnside categories, we construct an
essentially well-defined finite CW spectrum $\Realize{F}$, which in
the $\ZZ_2$-equivariant case $\bullet=\xi$ is a $\ZZ_2$-equivariant
spectrum. As a first step, we construct finite CW complexes
$\CRealize{F}_{k}$ for sufficiently large $k$, so that increasing the
parameter $k$ corresponds to suspending the CW complex
$\CRealize{F}_{k}$. The finite CW spectrum $\Realize{F}$ is then
defined from this sequence of spaces. The construction of
$\CRealize{F}_{k}$ depends on some auxiliary choices, but its stable
homotopy type does not. Moreover, the spectra constructed from two
stably equivalent Burnside functors will be homotopy equivalent. 

For signed Burnside functors, i.e., when $\bullet=\sigma$, we can
carry out our construction with a $\ZZ_2$-action. For ordinary
Burnside functors, i.e., when $\bullet = \varnothing$, our
construction recovers the ``little boxes'' realization of
\cite[\S5]{lls1}, cf.~\cite[\S7]{lls2}; but if it comes from a signed
Burnside functor, we produce an alternate constructon with an
extra $\ZZ_2$-action.

After reviewing the notion of ``box maps'' used in the little box
realization construction of \cite[\S5]{lls2}, we introduce ``signed
box maps.'' After providing the necessary background on homotopy
colimits, we then use signed box maps to construct the realization
$\Realize{\cdot}$ for functors to the signed Burnside category. This
is all that is needed to construct the odd Khovanov homotopy type. We
then indicate the modifications needed to define the other realization
functors and to construct the various extra $\ZZ_2$-actions.

\subsection{Signed box maps}\label{sec:signed-box-maps}
 We start with the construction of
(ordinary) box maps, following \cite[\S2.10]{lls1}.  Fix an
identification $S^k=[0,1]^k/\partial$ which we maintain though the
sequel, and view $S^k$ as a pointed space.  Let $B$ be a box in
$\RR^k$ with edges parallel to the coordinate axes, that is,
$B=[a_1,b_1] \times \dots \times [a_k,b_k]$ for some $a_i,b_i$.  Then
there is a standard homeomorphism from $B$ to $[0,1]^k$, via
$(x_1,\dots,x_k ) \to (\frac{x_1-a_1}{b_1-a_1},\dots,
\frac{x_k-a_k}{b_k-a_k})$.  Then we have an identification
$S^k\cong B/\partial B$.

Given a collection of sub-boxes $B_1,\dots, B_\ell \subset B$ with
disjoint interiors, there is an induced map
\begin{equation}\label{eq:signedphi}
  S^k = B/\partial B \to B /(B \backslash (\mathring{B_1} \cup \dots \cup \mathring{B_\ell}))= \bigvee^\ell_{a=1} B_a /\partial B_a = \bigvee^\ell_{a=1}S^k \to S^k.
\end{equation}
The last map is the identity on each summand, so that the composition
has degree $\ell$.  As pointed out in \cite{lls1}, this construction is
continuous in the position of the sub-boxes.  We let $E(B,\ell)$
denote the space of boxes with disjoint interiors in $B$, and have a
continuous map $E(B,\ell) \to \Map(S^k,S^k)$.

We can generalize the above procedure to associate a map of spheres to
a map of sets $A \to Y$, as follows.  Say we have chosen sub-boxes
$B_a \subset B$ with disjoint interiors, for $a\in A$.  Then we have a
map:
\begin{equation}\label{eq:mapfromset}
  S^k = B /\partial B \to B /(B\backslash (\bigcup_{a\in A}\mathring{B_a} )) =\bigvee_{a\in A} B_a /\partial B_a = \bigvee_{a\in A}S^k \to \bigvee_{y\in Y} S^k
\end{equation}
where the last map is built using the map of sets $A \to Y$.

More generally, we can also assign a box map to a correspondence of
sets, as follows.  Fix a correspondence $A$ from $X$ to $Y$ with
source map $s$ and target map $t$.  Say that we also have a collection
of boxes $B_x$ for $x\in X$.  Finally, we also choose a collection of
sub-boxes $B_a \subset B_{s(a)}$ with disjoint interiors, for $a\in
A$.  We then have an induced map
\begin{equation}\label{eq:mapfromset2}
\bigvee_{x\in X} S^k \to \bigvee_{y\in Y} S^k,
\end{equation}
by applying, on $B_x$, the map associated to the set map $s^{-1}(x)
\to Y$.  A map as in Equation~\eqref{eq:mapfromset2} is said to \emph{refine}
the correspondence $A$.  Let $E(\{B_x\},s)$ be the space of
collections of labeled sub-boxes $\{B_a \subset B_{s(a)} \mid a \in
A\}$ with disjoint interiors.  Then, choosing a correspondence
$(A,s,t)$ (so that $A$ and $s$ are those appearing in the definition
of $E(\{B_x \},s)$---note that the definition of $E(\{B_x\},s)$ does
not involve the target map $t$)---Equation~\eqref{eq:mapfromset2} gives a map
$E(\{B_x\},s) \to \Map(\vee_{x\in X} S^k ,\vee_{y\in Y} S^k)$.  We
write
\begin{equation}
\Phi(e,A) \in\Map(\bigvee_{x\in X} S^k \to \bigvee_{y\in Y}S^k)\label{eq:phi}
\end{equation}
for the map associated to $e\in E(\{B_x\},s)$ and a compatible
correspondence $(A,s,t)$. The main point is that, for any box map
$\Phi(e,A)$ refining $A$, the induced map on the $k\th$ homology
agrees with the abelianization map
\[
\Abelianize(A)\from\Abelianize(X)=\widetilde{H}_k(\vee_{x\in
  X}S^k)\to\Abelianize(Y)=\widetilde{H}_k(\vee_{y\in Y}S^k).
\]

We now indicate a further generalization of box maps to cover signed
correspondences.  Fix a signed correspondence $(A,s,t,\sigma)$ from
$X$ to $Y$, and let $B_x,\; x \in X$ be some collection of boxes.  Fix
a collection of sub-boxes $B_{a} \subset B_{s(a)}$ for $a\in A$.
There is an induced map just as in Equation~\eqref{eq:mapfromset2},
but whose construction depends on the sign map $\sigma$, as
follows. For $x\in X$, we have a set map $s^{-1}(x) \to Y$, along with
signs for each element of $s^{-1}(x)$.  We modify the box map refining
$s^{-1}(x) \to Y$ (without sign) by precomposing with $\refl$,
reflection in the first coordinate, in boxes with non-trivial sign:
\[
S^k = B/\partial B \to B /(B\backslash (\bigcup_{a\in A}\mathring{B_a} )) = \bigvee_{a\in A} B_a /\partial B_a\xrightarrow{\bigvee \refl_{a}} \bigvee_{a\in A} B_a /\partial B_a = \bigvee_{a\in A}S^k  \to \bigvee_{y\in Y}S^k.
\]
Here $\refl_a=\refl$ if $\sigma(a)=-1$ and $\refl_a=\Id$ if
$\sigma(a)=+1$. This defines the map on the factor on the left of
Equation~\eqref{eq:mapfromset2} indexed by the element $x\in X$.  We
say that a map constructed this way \emph{refines} the signed
correspondence $(A,s,t,\sigma)$.  As before, we can regard it as a map
\[
\Phi(e,A)\in \Map( \bigvee_{x\in X} S^k ,\bigvee_{y\in Y}S^k),
\]
where $e \in E(\{B_x\},s)$, and $(A,s,t,\sigma)$ is a compatible
signed correspondence.  Once again, the induced map on the $k\th$
homology agrees with the abelianization map.

Similarly, for a signed correspondence $(A,s,t,\sigma)$, we can
consider box maps refining the (unsigned) correspondence, and then
precompose by $\dig$, the reflection in the first two coordinates, if
the sign is nontrivial.  We call a map obtained this way a
\emph{doubly signed} refinement of the tuple $(A,s,t,\sigma)$, and
denote it $\widehat{\Phi}(e,A)$.

We next record two lemmas from \cite{lls1} about the spaces
$E(\{B_x\},s)$ that we need later. For a map $s\from A \to X$ define
$E_\sym(\{B_x\},s)$ to be the subspace of $E(\{ B_x\},s)$ in which we
require the box $B_a$ to lie symmetrically in $B_{s(a)}$ with respect
to reflection in the first coordinate $\refl$, for all $a\in A$, and $E_{\dsym}(\{B_x\},s)$ by requiring symmetry in the first two coordinates.

\begin{lem}\label{lem:connctd}
  Consider $s\from A\to X$. If the boxes are $k$-dimensional then
  $E(\{ B_x\},s)$ is $(k-2)$-connected and $E_\sym(\{B_x\},s)$ is
  $(k-3)$-connected and $E_{\dsym}(\{B_x\},s)$ is $(k-4)$-connected.
\end{lem}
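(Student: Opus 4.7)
The plan is to reduce each statement to standard facts about ordered configuration spaces of points in Euclidean space. Since $E(\{B_x\},s)$ (respectively $E_\sym$, $E_{\dsym}$) splits as a product over $x\in X$ of spaces of the corresponding type supported in a single box $B_x$, and a finite product of $n$-connected spaces is $n$-connected, I only need to treat a single $x$. So fix $x\in X$, let $\ell=|s^{-1}(x)|$, and set $B=B_x$.

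For the unsigned case, I would exhibit a weak equivalence $E(B,\ell)\simeq F(\RR^k,\ell)$, where $F(\RR^k,\ell)$ is the ordered configuration space of $\ell$ distinct points in $\RR^k$. The map is ``shrink each sub-box to its center''; the homotopy inverse is constructed by placing a small enough axis-parallel cube around each point and uses a straight-line interpolation inside the contractible space of widths admissible at a given point configuration. It is then a standard fact (via the Fadell--Neuwirth fibration $F(\RR^k,\ell)\to F(\RR^k,\ell-1)$ with fiber $\RR^k$ minus $\ell-1$ points, which is $(k-2)$-connected, and induction on $\ell$) that $F(\RR^k,\ell)$ is $(k-2)$-connected. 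This is precisely the argument recorded in \cite{lls1}.

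For the symmetric case, write $B=[-c,c]\times B'$ where $B'$ is the $(k-1)$-dimensional cross-section. An $\refl$-symmetric sub-box has the form $[-r,r]\times B''$ with $B''\subset B'$ an axis-parallel sub-box, and any two such sub-boxes $[-r,r]\times B''$ and $[-r',r']\times B'''$ have intersecting first-coordinate projections (both contain $0$), so they have disjoint interiors if and only if $B''$ and $B'''$ do. The projection $(r,B'')\mapsto B''$ and the obvious section give a deformation retraction of $E_\sym(\{B\},s)$ onto the space of labeled configurations of sub-boxes in the $(k-1)$-dimensional box $B'$, which by the first paragraph is $((k-1)-2)=(k-3)$-connected. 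An entirely analogous argument, projecting out the two symmetric directions, shows $E_{\dsym}(\{B\},s)$ deformation retracts onto configurations in a $(k-2)$-dimensional box, giving $(k-4)$-connectivity.

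The main technical point, and the only place requiring care, is the deformation retraction from box configurations to point configurations (and the fact that in the symmetric case the constraint $0\in[-r,r]$ kills the first coordinate in the disjointness condition). Once this homotopy equivalence is in hand, the connectivity bounds are immediate from the classical Fadell--Neuwirth computation. I expect the bookkeeping to be slightly fussier in the $E_{\dsym}$ case only because one must verify that two symmetric-in-the-first-two-coordinates sub-boxes really do have overlapping projections to those first two coordinates; but again symmetry about the origin forces these projections to contain the origin, so overlap is automatic and the reduction to $\dim k-2$ goes through verbatim.
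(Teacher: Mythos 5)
Your proposal is correct and follows essentially the same approach as the paper: the paper cites \cite[Lemma~2.29]{lls1} for the $(k-2)$-connectivity of $E(\{B_x\},s)$ and then observes, just as you do, that $E^k_\sym(\{B_x\},s)$ is homotopy equivalent to $E^{k-1}(\{B_x\},s)$ (and likewise one more dimension down for $E_\dsym$), because the symmetry constraint forces the first-coordinate interval(s) to contain the center so only the remaining coordinates govern disjointness. The only difference is that you re-derive the base case via configuration spaces and the Fadell--Neuwirth fibration, whereas the paper simply cites it.
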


\begin{proof}
  The first statement is \cite[Lemma 2.29]{lls1}, and the second
  statement follows from the first since the space of symmetric
  $k$-dimensional boxes, $E^k_\sym(\{B_x\},s)$, is homotopy equivalent
  to the space of $(k-1)$-dimensional boxes, $E^{k-1}(\{B_x\},s)$, from which the third statement also follows.
\end{proof}

\begin{lem}\label{lem:box-composition}
  If $e\in E(\{B_x\},s_A)$ is compatible with a signed correspondence
  $A$ from $X$ to $Y$, and $f\in E(\{B_y\},s_B)$ is compatible with a
  signed correspondence $B$ from $Y$ to $Z$, then there is a unique
  $f\circ_{\sigma_A} e\in E(\{B_x\},s_{B\circ A})$ compatible with
  $B\circ A$, depending only on $e$, $f$, and the sign map $\sigma_A$,
  so that $\Phi(f\circ e,B\circ A)=\Phi(f,B)\circ\Phi(e,A)$; we will
  sometimes drop the subscript $\sigma_A$ (as we did just
  now). Similarly, there is a unique $f\,\widehat{\circ}_{\sigma_A}\,
  e\in E(\{B_x\},s_{B\circ A})$ compatible with $B\circ A$, so that
  $\widehat{\Phi}(f\,\widehat{\circ}\,e,B\circ
  A)=\widehat{\Phi}(f,B)\circ\widehat{\Phi}(e,A)$. Both of these
  assignments $\circ,\widehat{\circ}\from E(\{B_y\},s_B)\times
  E(\{B_x\},s_A)\to E(\{B_x\},s_{B\circ A})$ are continuous and send
  $E_\sym(\{B_y\},s_B)\times E_\sym(\{B_x\},s_A)$ to
  $E_\sym(\{B_x\},s_{B\circ A})$ (and similarly for
  $E_{\dsym}$). Moreover, $\circ$ agrees with $\widehat{\circ}$ when
  restricted to $E_\dsym(\{B_y\},s_B)\times E_\dsym(\{B_x\},s_A)$.
\end{lem}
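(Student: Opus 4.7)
The plan is to construct $f \circ_{\sigma_A} e$ and $f \,\widehat{\circ}_{\sigma_A}\, e$ by an explicit nesting of sub-boxes, and then verify each claim by direct computation. For each $a \in A$, let $\iota_a \from B_a \to B_{t_A(a)}$ be the unique orientation-preserving affine bijection (the one implicit in the identifications $B / \partial B \cong S^k$). For $(b,a) \in B \circ A = B \times_Y A$, set
\[
B_{(b,a)} \;=\; \begin{cases} \iota_a^{-1}(B_b) & \text{if } \sigma_A(a) = +1, \\ \iota_a^{-1}(\refl B_b) & \text{if } \sigma_A(a) = -1, \end{cases}
\]
where $\refl$ is the reflection of $B_{t_A(a)}$ in its first coordinate. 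This defines $f \circ_{\sigma_A} e \in E(\{B_x\}, s_{B \circ A})$; the doubly signed composition $f \,\widehat{\circ}_{\sigma_A}\, e$ is defined by the same formula with $\dig$ (reflection in the first two coordinates) replacing $\refl$. Disjointness of interiors of the $B_{(b,a)}$'s inside $B_x$ follows from the disjointness of the $B_a$'s and the $B_b$'s.

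To verify $\Phi(f \circ e, B \circ A) = \Phi(f,B) \circ \Phi(e,A)$, restrict to the summand $B_x/\partial B_x$. The composite collapses outside $\bigsqcup_a B_a$, identifies each $B_a/\partial B_a$ with $B_{t_A(a)}/\partial B_{t_A(a)}$ via $\iota_a$ (post-composed with $\refl$ iff $\sigma_A(a) = -1$), collapses outside $\bigsqcup_b B_b$, and identifies each $B_b/\partial B_b$ with $B_{t_B(b)}/\partial B_{t_B(b)}$ via $\iota_b$ (post-composed with $\refl$ iff $\sigma_B(b) = -1$). By construction, $B_{(b,a)}$ is exactly the subset of $B_a$ sent bijectively onto $B_b$ by the first map, so the composite restricted to $B_{(b,a)}/\partial B_{(b,a)}$ is an affine map to $B_{t_B(b)}/\partial B_{t_B(b)}$ that is orientation-reversing in the first coordinate iff $\sigma_A(a)\sigma_B(b) = -1$. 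Since this equals $\sigma_{B \circ A}(b,a)$, the composite agrees with the box map $\Phi(f \circ e, B \circ A)$. The same argument with $\dig$ in place of $\refl$ handles $\widehat{\Phi}$. Uniqueness is clear since each $B_{(b,a)}$ is pinned down as the preimage of $B_b$ in $B_a$ under the (possibly reflected) identification; continuity in $e$ and $f$ is immediate from the formulas, which involve only affine rescalings and fixed reflections.

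For the symmetry claims, suppose $e \in E_\sym(\{B_x\}, s_A)$ and $f \in E_\sym(\{B_y\}, s_B)$. Then the first-coordinate center of $B_a$ agrees with that of $B_{s_A(a)}$, so $\iota_a$ conjugates $\refl$ on $B_a$ with $\refl$ on $B_{t_A(a)}$; moreover, $\refl B_b = B_b$, so $B_{(b,a)} = \iota_a^{-1}(B_b)$ in both sign cases and is $\refl$-symmetric in $B_{s_A(a)}$. For $\widehat{\circ}$, the set $\dig B_b$ is still $\refl$-symmetric because $\refl$ commutes with reflection in the second coordinate, so the same conclusion holds. The $E_\dsym$ case is identical, using symmetry in both coordinates. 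Finally, on $E_\dsym \times E_\dsym$, the box $B_b$ is set-wise fixed by both $\refl$ and $\dig$, so the two formulas for $f \circ_{\sigma_A} e$ and $f \,\widehat{\circ}_{\sigma_A}\, e$ reduce to the same $\iota_a^{-1}(B_b)$ and therefore coincide. The only delicate part of the proof is sign bookkeeping: once the explicit formula for $B_{(b,a)}$ is written down, all the remaining assertions reduce to direct verification.
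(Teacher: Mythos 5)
Your proof is correct and takes essentially the same approach as the paper's: define $B_{(b,a)}$ as the pullback along $\iota_a$ of $B_b$ (pre-reflected if $\sigma_A(a)=-1$), then check the composition identity and the symmetry claims by tracking how the standard affine identifications and coordinate reflections interact. The paper's proof is more terse—it asserts "it is clear" where you spell out the orientation bookkeeping and the preservation of $E_\sym$ and $E_\dsym$—but the underlying construction and the key observation (that on $E_\dsym\times E_\dsym$ neither $\circ$ nor $\widehat{\circ}$ ever invokes a reflection, so they agree) are identical.
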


\begin{proof}
  This `composition map' $E(\{B_y\},s_B)\times E(\{B_x\},s_A)\to
  E(\{B_x\},s_{B\circ A})$, as discussed in \cite[\S2.10]{lls1}, is
  constructed in the unsigned as follows: Given sub-boxes $B_b\subset
  B_{s_B(b)}$ for all $b\in B$ and sub-boxes $B_a\subset B_{s_A(a)}$
  for all $a\in A$, define, for all $(b,a)\in B\times_Y A$, the
  sub-box $B_{(b,a)}\subset B_{s_A(a)}$ to be the `sub-box of the
  sub-box', $B_b\subset B_{s_B(b)=t_A(a)}\cong B_a\subset B_{s_A(a)}$.

  In the signed case, one needs to precompose with a reflection if
  $\sigma_A(a)=-1$. That is, define $B_{(b,a)}$ to be sub-box
  $\refl_a(B_b)\subset B_{s_B(b)}\cong B_a$ of the sub-box $B_a\subset
  B_{s_A(a)}$, where as before, $\refl_a=\refl$ if $\sigma_A(a)=-1$
  and $\refl_a=\Id$ if $\sigma_A(a)=+1$.

  It is clear that $\Phi(f\circ e,B\circ
  A)=\Phi(f,B)\circ\Phi(e,A)$. The slight subtlety lies in the case
  when $(b,a)\in B\circ A$ has sign $+1$, but each of $a$ and $b$ has
  sign $-1$; then each of the box maps $\Phi(e,A)$ and $\Phi(f,B)$
  requires reflecting along the first coordinate, and so their
  composition does not. The case for $\widehat{\Phi}$ is similar.

  It is clear that both $\circ$ and $\widehat{\circ}$ send
  $E_\sym(\{B_y\},s_B)\times E_\sym(\{B_x\},s_A)$ to
  $E_\sym(\{B_x\},s_{B\circ A})$ and $E_\dsym(\{B_y\},s_B)\times
  E_\dsym(\{B_x\},s_A)$ to $E_\dsym(\{B_x\},s_{B\circ A})$. In both
  cases, the definition of $\circ$ does not require any reflections,
  while in the latter case, the definition of $\widehat{\circ}$ does
  not. So in the latter case, $\circ$ and $\widehat{\circ}$ agree.
\end{proof}

\subsection{Homotopy coherence}\label{subsec:homtpy}
In this section, we briefly review homotopy colimits and homotopy
coherent diagrams following \cite[\S2.9]{lls1}. Let $\topp$ be the
category of well-based topological spaces; we will usually work with
finite CW complexes. A \emph{weak equivalence} $X\to Y$ is a map that
induces isomorphisms on all homotopy groups; typically our spaces are
all simply connected, when the definition reduces to being
isomorphisms on all homology groups. A homotopy equivalence is a
special case of weak equivalence, and for CW complexes (the case at
hand), the two notions are equivalent.

We will sometimes also work with spaces equipped with an action by a
fixed finite group $G$ (which is $\ZZ_2$ or $\ZZ_2\times\ZZ_2$ in our
case), and all maps are $G$-equivariant, forming a category
$G\text{-}\topp$. We also require that the inclusions of fixed points
$X^{H}\to X^{H'}$, for all subgroups $H'<H$ of $G$, are cofibrations;
in our case, all the spaces will carry CW structures so that the
actions are CW actions---that is, each group element simply permutes
the cells and respects the attaching maps. A map $X\to Y$ is called a
\emph{weak equivalence} if the induced map $X^H\to Y^H$ is a weak
equivalence for all subgroups $H$ of $G$. A homotopy equivalence in
$G\text{-}\topp$ induces a weak equivalence. For $G$-CW complexes (the
case at hand), the two notions are equivalent by the $G$-Whitehead
theorem, see \cite[Theorem 2.4]{Greenlees-May}. For $G$-CW complexes,
a weak equivalence $X\to Y$ induces a weak equivalence between
quotients of fixed points, $X^{H'}/X^H\to Y^{H'}/Y^H$, for all
subgroups $H'<H$ of $G$, and between orbit spaces, $X/H\to Y/H$, for
all subgroups $H$ of $G$.

Now we recall the notion of a homotopy coherent diagram, which is the
data from which a homotopy colimit is constructed. Fix a finite group
$G$ (typically $0$ or $\ZZ_2$ or $\ZZ_2\times\ZZ_2$). A homotopy
coherent diagram is intuitively a diagram $F\from \Cat\to
G\text{-}\topp$ which is not commutative, but commutative up to
homotopy, and the homotopies themselves commute up to higher homotopy,
and so on, and for which all the homotopies and higher homotopies are
viewed as part of the data of the diagram. Precisely, we have the
following.

\begin{defn}[{\cite[Definition 2.3]{vogt}}]\label{def:homco}
  A homotopy coherent diagram $F\from\Cat\to G\text{-}\topp$ assigns
  to each $x \in \Cat$ a space $F(x) \in G\text{-}\topp$, and for each $n \geq
  1$ and each sequence
    \[
    x_0 \xrightarrow{f_1} x_1 \xrightarrow{f_2} \cdots \xrightarrow{f_n} x_n
    \]
    of composable morphisms in $\Cat$ a continuous $G$-map
    \[
    F(f_n,\dots,f_1)\from [0,1]^{n-1} \times F(x_0) \to F(x_n)\phantom{\xrightarrow{f}}
    \]
    with $F(f_n,\dots,f_1)([0,1]^{n-1}\times \{ *\})=*$.  These maps
    are required to satisfy the following compatibility conditions:
\begin{align}
\nonumber F(f_n,\dots,f_1)(t_1,& \dots,t_{n-1})=  \\ 
& \begin{cases}
F(f_n,\dots,f_2)(t_2,\dots,t_{n-1}), &f_1 = \Id\\
F(f_n ,\dots,\hat{f}_i,\dots,f_1)(t_1,\dots,t_{i-1}\cdot t_i,\dots,t_{n-1}),&f_i=\Id, 1 < i < n\\
F(f_{n-1},\dots,f_1)(t_1,\dots,t_{n-2}),&f_n = \Id\\
F(f_n,\dots,f_{i+1})(t_{i+1},\dots, t_{n-1}) \circ F(f_i,\dots,f_1)(t_1,\dots,t_{i-1}), & t_i=0 \\
F(f_n,\dots,f_{i+1}\circ f_i,\dots,f_1)(t_1,\dots,\hat{t}_{i},\dots,t_{n-1}), & t_i=1.
\end{cases}\label{eq:compat}
\end{align}
When $\Cat$ does not contain any non-identity isomorphisms, homotopy
coherent diagrams may be defined only in terms of non-identity
morphisms and the last two compatibility conditions.
\end{defn}

Given a homotopy coherent diagram, we can define its \emph{homotopy
  colimit} in $G\text{-}\topp$, quite concretely, as follows:
\begin{defn}[{\cite[\S5.10]{vogt}}]\label{def:hoco}
  Given a homotopy coherent diagram $F\from \Cat \to G\text{-}\topp$ the
  \emph{homotopy colimit} of $F$ is defined by
\begin{equation}\label{eq:hoco}
\hoco \; F = \{ * \} \amalg \coprod_{n\geq 0} \coprod_{x_0\xrightarrow{f_1} \cdots\xrightarrow{f_n}x_n} [0,1]^n \times F(x_0) /\sim,
\end{equation}
 where the equivalence relation $\sim$ is given as follows:
\[
(f_n,\dots,f_1;t_1,\dots,t_n;p)\sim\begin{cases}
(f_n,\dots,f_2 ; t_2,\dots,t_n;p),&f_1=\Id\\
(f_n,\dots,\hat{f}_i,\dots,f_1;t_1,\dots,t_{i-1}\cdot t_i,\dots,t_n;p),&f_i=\Id,i>1\\
(f_{n},\dots,f_{i+1};t_{i+1},\dots,t_n;F(f_i,\dots,f_1)(t_1,\dots,t_{i-1},p)), &t_i=0\\
(f_n,\dots,f_{i+1}\circ f_i,\dots,f_1;t_1,\dots,\hat{t}_{i},\dots,t_n;p), &t_i=1,i<n\\
(f_{n-1},\dots,f_1; t_1,\dots,t_{n-1};p), & t_n=1\\
*, & p=*.
\end{cases}
\]
When $\Cat$ does not contain any non-identity isomorphisms, homotopy
colimits may be defined only in terms of non-identity morphisms and the
last four equivalence relations.
\end{defn}

We will need the following properties, most of which are immediate
consequences of the above formulas:
\begin{enumerate}[leftmargin=*,label=(ho-\arabic*)]
\item \label{itm:ho1} Suppose that $F_0,F_1\from \Cat \to
  G\text{-}\topp$ are homotopy coherent diagrams and $\eta\from F_1
  \to F_0$ is a natural transformation, that is, a homotopy coherent
  diagram
  \[\eta\from\two\times\Cat\to\topp
  \]
  with $\eta|_{\{i\}\times\Cat}=F_i$, $i=0,1$.  Then $\eta$ induces a
  map $\hoco\,\eta \from \hoco\,F_1 \to \hoco\,F_0$.  If $\eta(x)$
  is a weak equivalence for each $x\in \Cat$---we will call such an
  $\eta$ a weak equivalence $F_1\to F_0$---then $\hoco\,\eta$ is a
  weak equivalence as well.

  When the spaces involved are $G$-CW complexes (the case at hand), a
  weak equivalence $\eta\from F_1\to F_0$ is also a homotopy
  equivalence \cite[Proposition 4.6]{vogt}, that is, there exists
  $\zeta,\zeta'\from F_0\to F_1$ and
  \[
  \mathfrak{h},\mathfrak{h}'\from\{2\to1\to0\}\times\Cat\to
  G\text{-}\topp,
  \]
  with $\mathfrak{h}|_{\{2\to1\}\times\Cat}=\eta$,
  $\mathfrak{h}|_{\{1\to0\}\times\Cat}=\zeta$,
  $\mathfrak{h}|_{\{2\to0\}\times\Cat}=\Id_{F_0}$, and
  $\mathfrak{h}'|_{\{2\to1\}\times\Cat}=\zeta'$,
  $\mathfrak{h}'|_{\{1\to0\}\times\Cat}=\eta$,
  $\mathfrak{h}'|_{\{2\to0\}\times\Cat}=\Id_{F_1}$.
\item \label{itm:ho2} Suppose that $F_0,F_1\from \Cat \to
  G\text{-}\topp$ are diagrams and that $F_0 \vee F_1 \from \Cat \to
  G\text{-}\topp$ is the diagram obtained by wedge sum: $(F_0 \vee
  F_1)(x)=F_0(x) \vee F_1(x)$ for all $x\in\Cat$, and
  \[(F_0\vee
  F_1)(f_n,\dots,f_1)(t_1,\dots,t_{n-1},p)=F_i(f_n,\dots,f_1)(t_1,\dots,t_{n-1},p)\]
  for all $i=0,1$, $x_0 \xrightarrow{f_1} x_1 \xrightarrow{f_2} \cdots
  \xrightarrow{f_n} x_n$, and $p\in F_i(x_0)$.  Then $\hoco (F_0\vee
  F_1)$ and $(\hoco F_0) \vee (\hoco F_1)$ are naturally homeomorphic.

\item \label{itm:ho4} For any normal subgroup $H$ of $G$, define the
  fixed point diagram $F^H\from\Cat\to G/H\text{-}\topp$ by setting
  $F^H(x)$ to be the fixed points ${F(x)}^H$. Define the quotient
  diagram $F/F^H\from\Cat\to G\text{-}\topp$ by setting $(F/F^H)(x)$
  to be the quotient $F(x)/\{p\sim *\text{ for all }p\in
  F^H(x)\}$. Then there are natural homeomorphisms
  \[
  \begin{tikzpicture}[xscale=4,yscale=1.5]
  \node (hofix) at (0,0) {$\hocolim(F^H)$};
  \node (ho1) at (1,0)  {$\hocolim F$};
  \node (hoquot) at (2,0) {$\hocolim(F/F^H)$};

  \node (fixho) at (0,1) {$\hocolim(F)^H$};
  \node (ho2) at (1,1) {$\hocolim F$};
  \node (quotho) at (2,1) {$\hocolim(F)/\hocolim(F)^H$};

  \draw[->] (hofix) -- (ho1);
  \draw[->] (ho1) -- (hoquot);
  \draw[->] (fixho) -- (ho2);
  \draw[->] (ho2) -- (quotho);

  \draw[<-] (hofix) -- (fixho) node[midway,anchor=west] {$\cong$};
  \draw[<-] (hoquot) -- (quotho) node[midway,anchor=west] {$\cong$};
  \draw[<-] (ho1) -- (ho2) node[midway,anchor=west] {$=$};
  \end{tikzpicture}
  \]
  with the arrows on the bottom row being induced from the natural
  transformations $F^H\to F \to F/F^H$.

  For diagrams of $G$-CW complexes, the arrows on the top row form a
  cofibration sequence since $(\hocolim F,\hocolim(F)^H)$ form a
  CW-pair; moreover, a weak equivalence $F_1\to F_0$ induces weak
  equivalences $F^H_1\to F^H_0$ and $F_1/F^H_1\to F_0/F^H_0$ by the
  $G$-Whitehead theorem.
\item \label{itm:ho5} For any normal subgroup $H$ of $G$, let
  $F/H\from\Cat\to G/H\text{-}\topp$ be the orbit diagram, obtained by setting
  $(F/H)(x)$ to be the orbit $F(x)/\{p\sim h(p)\text{ for all }p\in
  F(x),h\in H\}$. Then $\hocolim(F)/{H}$ and $\hocolim(F/{H})$ are
  naturally homeomorphic, and the map $\hocolim F\to\hocolim(F/{H})$
  induced from the natural transformation $F\to F/{H}$ is identified
  with the quotient map $\hocolim F\to\hocolim(F)/{H}$.

  For diagrams of $G$-CW complexes, a weak equivalence $F_1\to F_0$
  induces a weak equivalence $F_1/H\to F_0/H$ by the $G$-Whitehead
  theorem.
\end{enumerate}

\subsection{Little boxes refinement}
With this background, we are ready to review the little box
realization construction of \cite[\S5]{lls1} and generalize for the
other kinds of Burnside functors introduced here, i.e., functors to
$\burn_\bullet$ for $\bullet\in\{\emptyset,\sigma,\xi\}$.

\begin{defn}\label{def:spacref}
  Fix a small category $\Cat$ and a strictly unitary $2$-functor $F
  \from \Cat \to \burn_\bullet$.  A $k$-dimensional \emph{spatial
    refinement} of $F$ is a homotopy coherent diagram $\widetilde{F}_k
  \from \Cat \to \topp$ such that
\begin{enumerate}[leftmargin=*]
\item For any $u\in \Cat$, $\widetilde{F}_k(u)=\vee_{x\in F(u)} S^k$,
  where $S^k=[0,1]^k/\bdy$.  When $\bullet=\xi$,
  this space has an additional $\ZZ_2$-action---denoted
  $\xi$---induced from the $\ZZ_2$-action on the set $F(u)$.
\item For any sequence of morphisms $u_0 \xrightarrow{f_1} \cdots
  \xrightarrow{f_n} u_n$ in $\Cat$ and any $(t_1,\dots,t_{n-1})\in
  [0,1]^{n-1}$ the map
  \[
  \widetilde{F}_k(f_n,\dots,f_1) (t_1,\dots,t_{n-1})\from \bigvee_{x\in F(u_0)} S^k \to \bigvee_{x\in F(u_n)}S^k
  \]
  is a box map refining the (possibly signed) correspondence $F(f_n
  \circ \dots \circ f_1)$, which is naturally isomorphic to $F(f_n)
  \times_{F(u_{n-1})} \dots \times_{F(u_1)} F(f_1)$; when
  $\bullet=\xi$, we require each $\widetilde{F}_k$ to be
  $\xi$-equivariant. 
\end{enumerate}
Note that for $\bullet=\xi$, the $\xi$-action is a CW action, with the
CW complex structure given by the boxes. 
\end{defn}

This definition reduces to \cite[Definition 5.1]{lls1} when
$\bullet=\varnothing$. The additions here are the signed box map
refinements introduced in \S\ref{sec:signed-box-maps}, which are needed
for functors to $\oddb$ with non-trivial signs in the correspondences,
and the equivariant conditions.

The following is the main technical result in this section. As usual,
$\bullet\in \{\varnothing, \sigma,\xi\}$ in the statement. For
$\bullet=\varnothing$, the ordinary Burnside category, this reduces to
\cite[Proposition 5.2]{lls1}.

\begin{prop}\label{prop:cube}
  Let $\Cat$ be a small category in which every sequence of composable
  non-identity morphisms has length at most $n$, and let $F\from
  \Cat\to\burn_\bullet$ be a strictly unitary 2-functor.
\begin{enumerate}[leftmargin=*]
\item If $k \geq n$, there is a $k$-dimensional spatial refinement of
  $F$ (which is $\xi$-equivariant if $\bullet=\xi$).   \label{itm:real1}
\item If $k \geq n+1$, then any two $k$-dimensional spatial
  refinements of $F$ are weakly equivalent ($\xi$-equivariantly if
  $\bullet=\xi$).  \label{itm:real2}
\item If $\widetilde{F}_k$ is a $k$-dimensional spatial refinement of $F$,
  then the result of suspending each $\widetilde{F}_k(u)$ and
  $\widetilde{F}_k(f_n, \dots, f_1)$ gives a $(k+1)$-dimensional spatial
  refinement of $F$.  \label{itm:real3}
\end{enumerate}
\end{prop}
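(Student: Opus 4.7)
The proof proceeds by induction on the length $\ell$ of composable sequences of non-identity morphisms in $\Cat$, closely following \cite[Proposition~5.2]{lls1} and extending it to the signed and $\ZZ_2$-equivariant settings using the signed box maps of \S\ref{sec:signed-box-maps}. I outline the three parts in turn, flagging the point where equivariance enters.

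For part~\ref{itm:real1}, set $\widetilde{F}_k(u)=\bigvee_{x\in F(u)}S^k$ on objects, with the induced $\ZZ_2$-action when $\bullet=\xi$. For $\ell=1$, pick any point in $E(\{B_x\},s_{F(f)})$ and let $\widetilde{F}_k(f)$ be the associated (signed, when $\bullet=\sigma$) box map refining $F(f)$. Inductively, suppose $\widetilde{F}_k(f_j,\dots,f_1)$ has been constructed for all $j<\ell$ compatibly with~\eqref{eq:compat}. Then the compatibility relations prescribe $\widetilde{F}_k(f_\ell,\dots,f_1)$ on $\partial([0,1]^{\ell-1})\times\bigvee_{x}S^k$ in terms of lower-order data; by Lemma~\ref{lem:box-composition}, this boundary assembles into a continuous map
\[
S^{\ell-2}\cong\partial([0,1]^{\ell-1})\to E(\{B_x\},s_{F(f_\ell\circ\cdots\circ f_1)}).
\]
By Lemma~\ref{lem:connctd}, the target is $(k-2)$-connected, so the map extends to all of $[0,1]^{\ell-1}$ whenever $\ell-2\le k-2$, i.e.\ $\ell\le k$. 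Since composable sequences in $\Cat$ have length at most $n\le k$, the induction closes. The signed case $\bullet=\sigma$ uses exactly the same box spaces (signs only affect how $\Phi(e,A)$ is built from $e$), and for the equivariant case $\bullet=\xi$, the freeness of the $\ZZ_2$-action on each $F(u)$ and each correspondence $F(f)$ means that an equivariant choice of $e$ is specified by an ordinary choice over a fundamental domain, so the space of equivariant refinements retains $(k-2)$-connectivity.

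For part~\ref{itm:real2}, given two $k$-dimensional refinements $\widetilde{F}_k^{\,0},\widetilde{F}_k^{\,1}$, I will assemble an interpolating homotopy coherent diagram $H\from\two\times\Cat\to\topp$ with $H|_{\{i\}\times\Cat}=\widetilde{F}_k^{\,i}$ by running the same obstruction-theoretic induction on $\two\times\Cat$; composable sequences of non-identity morphisms there have length at most $n+1$, accounting for the hypothesis $k\ge n+1$. The resulting natural transformations $\widetilde{F}_k^{\,0}\to H\leftarrow\widetilde{F}_k^{\,1}$ are the identity on every object, hence weak equivalences. Part~\ref{itm:real3} is formal: replacing each sub-box $B\subset[0,1]^k$ by $B\times[0,1]\subset[0,1]^{k+1}$ converts a $k$-dimensional spatial refinement into a $(k+1)$-dimensional one whose associated box maps are precisely the suspensions of the originals, automatically preserving all compatibility relations.

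The main obstacle is verifying the $\ZZ_2$-equivariant step in~\ref{itm:real1}: one must fix an involution on the ambient cube $[0,1]^k$ and arrange the box choices so that $B_{\xi x}=\xi\cdot B_x$ and $B_{\xi a}=\xi\cdot B_a$ throughout the induction, giving the needed $\ZZ_2$-CW structure. Once such an involution is chosen, the freeness of the action on indexing sets reduces the equivariant extension problem to the non-equivariant one on a fundamental domain, so no symmetry constraints (such as $E_\sym$ or $E_\dsym$) are imposed on the individual boxes and the connectivity bound is unchanged.
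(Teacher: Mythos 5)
Your overall strategy matches the paper's: part~(3) is the formal suspension observation; part~(1) is the obstruction-theoretic induction, choosing boxes one composable sequence at a time and extending from $\partial([0,1]^{\ell-1})$ to $[0,1]^{\ell-1}$ using the $(k-2)$-connectivity from Lemma~\ref{lem:connctd}, with compositions governed by Lemma~\ref{lem:box-composition}; and part~(2) is the same induction run on $\two\times\Cat$. For $\bullet\in\{\varnothing,\sigma\}$ this is essentially identical to the paper's proof.

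Where you diverge from the paper is the case $\bullet=\xi$. The paper does not argue directly with equivariant box configurations. Instead it first chooses a section of the $\ZZ_2$-quotient $\amalg_u F(u)\to\amalg_u F(u)/\ZZ_2$, which produces a \emph{signed} Burnside functor $G\from\Cat\to\oddb$ with $\oddeq\circ G\cong F$ (the sign of an element of $G(f)$ records whether its target lands in the chosen fundamental domain or its $\xi$-translate). It then reuses the already-constructed signed refinement machinery on $G$, and ``doubles'' the resulting box configurations $e_{f_m,\dots,f_1}$ to get $\xi$-equivariant configurations $d_{f_m,\dots,f_1}$ refining $F$. Your proposal instead runs the obstruction-theoretic induction directly in the $\ZZ_2$-equivariant configuration space, using the fact that the equivariant configuration space is a product over a fundamental domain of $F(u)$ and so retains $(k-2)$-connectivity. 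This works, and is arguably more direct, but you should add the observation that the composition map of Lemma~\ref{lem:box-composition} preserves $\ZZ_2$-equivariance of configurations (easy, since it is defined elementwise and the action is free), so that the boundary prescription in the induction step stays inside the equivariant subspace; the paper gets this for free by inheriting it from the signed case and the structure of $\oddeq$.

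One phrasing to correct: you say one must ``fix an involution on the ambient cube $[0,1]^k$.'' That is not what happens here. In the $\bullet=\xi$ case the $\ZZ_2$-action on $\widetilde{F}_k(u)=\bigvee_{x\in F(u)}S^k$ permutes wedge summands according to the free action on $F(u)$; it does \emph{not} act inside any single $[0,1]^k$. (An involution of the cube itself is what appears in the $\refl$- and doubly-equivariant refinements of Proposition~\ref{prop:cube-equivariant}, a different statement, and the conditions $E_\sym$, $E_\dsym$ that you mention are also specific to that setting.) Once you replace ``involution on the cube'' by ``the permutation of wedge summands induced by the free $\ZZ_2$-action on $F(u)$,'' the fundamental-domain reduction you describe is exactly right.
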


\begin{proof}
  The proof is parallel to that of \cite[Proposition 5.2]{lls1}.  For
  all values of $\bullet$, the third point is clear.

  The case $\bullet=\sigma$ is an immediate generalization of the case
  $\bullet=\emptyset$ as worked out in \cite{lls1}. We sketch the main
  details. For point (\ref{itm:real1}), given $u\in \Cat$, set
  $\widetilde{F}_k(u) =\vee_{x\in F(u)} S^k$, where the $x$-summand
  is $B_x/\partial$.  We choose for each $f\from u \to v$ in $\Cat$ a
  signed box map $\widetilde{F}_k(f)$ refining the signed
  correspondence $F(f)$; for the identity morphism $\Id_u$, choose
  $\widetilde{F}_k(\Id_u)$ to be the identity map
  $\Id_{\widetilde{F}_k(u)}$, which indeed is a box map refining the
  identity correspondence $\Id_{F(u)}=F(\Id_u)$. Let $e_f \in E(\{B_x
  \mid x \in F(u)\},s_{F(f)})$ be the collection of little boxes
  corresponding to $F(f)$.

  This gives a definition of $\widetilde{F}_k$ on vertices and arrows.
  We now need to define the appropriate coherences among these maps,
  which will be done inductively. Assume for all sequences $v_0
  \xrightarrow{f_1} \cdots \xrightarrow{f_m} v_m$ with $m\leq\ell$, we
  have defined continuous maps
  \[
  e_{f_m ,\dots, f_1}\from  [0,1]^{m-1}\to E(\{B_x \mid x \in F(v_0)), s_{F(f_m\circ \dots \circ f_1)}),
  \]
  and that these maps satisfy Equation~\eqref{eq:compat} (with the
  composition map from Lemma \ref{lem:box-composition} playing the
  role $\circ$). Then for the induction step, given $v_0
  \xrightarrow{f_1} \cdots \xrightarrow{f_{\ell+1}} v_{\ell+1}$, we
  have a continuous map
  \[
  S^{\ell-1}=\partial ([0,1]^\ell) \to E(\{B_x \mid x \in
  F(v_0)\},s_{F(f_{\ell+1}\circ\dots\circ f_1)})
  \]
  defined by
  \begin{equation}\label{eq:homconds3}
    \begin{split}
      (t_1,\dots,t_{i-1},0,t_{i+1},\dots,t_\ell) &\mapsto  e_{f_{\ell+1},\dots,f_{i+1}}(t_{i+1},\dots,t_{\ell})\circ e_{f_i,\dots,f_1}(t_1,\dots,t_{i-1}) \\
      (t_1,\dots,t_{i-1},1,t_{i+1},\dots,t_\ell) &\mapsto
      e_{f_{\ell+1},\dots,f_{i+1}\circ f_i,\dots,
        f_1}(t_1,\dots,t_{i-1},t_{i+1},\dots, t_{\ell}).
    \end{split}
  \end{equation}
  By Lemma \ref{lem:connctd}, this map extends to a map, call it
  $e_{f_{\ell+1},\dots,f_1}$, from $[0,1]^\ell$. By definition, the
  maps
  \[
  \Phi(e_{f_m,\dots,f_1})\from [0,1]^{m-1} \times \bigvee_{x\in F(v_0)} S^k \to \bigvee_{x\in F(v_{m})}S^k
  \]
  assemble to form a homotopy coherent diagram. 

  Next, we address point (\ref{itm:real2}).  Say that we have spatial
  refinements $\widetilde{F}^0_k$ and $\widetilde{F}^1_k$ of $F$.
  Consider the functor $G\from\two\times\Cat\to\burn_\bullet$ defined
  as the composition
  $\two\times\Cat\xrightarrow{\pi_2}\Cat\xrightarrow{F}\burn_\bullet$. We
  need only define a spatial refinement $\widetilde{G}_k$ of $G$ that
  restricts to $\widetilde{F}^i_k$ at $\{i\} \times \Cat$ for
  $i=0,1$. The construction of $\widetilde{G}_k$ proceeds uneventfully
  by induction as before. Then for each $u\in\Cat$,
  $G(\phi_{1,0}\times \Id_u)$ refines the identity correspondence
  $\Id_{F(u)}$ (and indeed, may be chosen to be the identity map), and
  hence is a weak equivalence; therefore, by \ref{itm:ho1},
  $\widetilde{G}_k\from \widetilde{F}^1_k\to\widetilde{F}^0_k$ is a
  weak equivalence as well.

  Next, we consider the case $\bullet=\xi$. Construct a functor
  $G\from\Cat\to\oddb$ so that $\oddeq\circ G$ is naturally isomorphic
  to $F$ by choosing a section of the $\ZZ/2$-quotient map
  $\amalg_{u\in\Cat}F(u)\to \amalg_{u\in\Cat}F(u)/\ZZ_2$; we leave the
  details to the reader. Then construct a box
  map refinement of $G$; let $B_x$ be the box assigned to $x$, for
  $x\in\amalg_{u\in\Cat}G(u)$, and let $e_{f_m ,\dots, f_1}\from
  [0,1]^{m-1}\to E(\{B_x \mid x \in G(v_0), s_{G(f_m\circ \dots \circ
    f_1)})$ denote the $[0,1]^{m-1}$-parameter family chosen for the
  sequence $v_0 \xrightarrow{f_1} \cdots \xrightarrow{f_m} v_m$ during
  the construction.

  We may then define $B_{\{1\}\times x}=\{1\}\times B_x$ and
  $B_{\{\xi\}\times x}=\{\xi\}\times B_x$, for
  $x\in\amalg_{u\in\Cat}G(u)$.  For any $v_0 \xrightarrow{f_1} \cdots
  \xrightarrow{f_m} v_m$ in $\Cat$, and any
  $(t_1,\dots,t_{m-1})\in[0,1]^{m-1}$, the configuration of disjoint
  boxes $e_{f_m ,\dots, f_1}(t_1,\dots,t_{m-1})\in E(\{B_x \mid x \in
  G(v_0), s_{G(f_m\circ \dots \circ f_1)})$ refining the signed
  correspondence $G(f_m\circ \dots \circ f_1)$ can be doubled to get a
  configuration of disjoint boxes
  \[
    d_{f_m,\dots,f_1}(t_1,\dots,t_{m-1})\in E(\{B_y \mid y \in
    F(v_0)=\{1,\xi\}\times G(v_0), s_{F(f_m\circ \dots \circ f_1)}=\Id_{\{1,\xi\}}\times s_{G(f_m,\circ\dots\circ f_1)})
  \]
  refining the unsigned correspondence $F(f_m\circ \dots
  \circ f_1)=\{1,\xi\}\times G(f_m\circ \dots \circ f_1)$. Use these
  $d_{f_m,\dots,f_1}$'s to construct the refinement, which is
  automatically $\xi$-equivariant. 
\end{proof}

\subsection{Realization of cube-shaped diagrams}\label{sec:cube-shaped-realize}
Finally in this section we will discuss how to construct a CW complex
$\CRealize{F}_{k}$, and then a CW spectrum
$\Realize{F}$, from a given diagram $F\from
\two^n\to\burn_\bullet$. Let $\two_+$ be the category with objects
$\{0,1,*\}$ and unique non-identity morphisms $1\to0$ and $1\to *$, and
let $\two^n_+=(\two_+)^n$.

Let $\widetilde{F}_k\from \two^n\to \topp$ be a spatial refinement of $F$ using $k$-dimensional
boxes, and let $\widetilde{F}^+_k\from \two^n_+ \to
\topp$ be the diagram obtained from $\widetilde{F}_k$ by setting
$\widetilde{F}_k^+(x)$ to be a point for all
$x\in\two^n_+\setminus\two^n$.  Let $\CRealize{F}_{k}$ be the
homotopy colimit of $\widetilde{F}_k^+$.  We call
$\CRealize{F}_{k}$ a \emph{realization} of $F\from \two^n
\to\burn_\bullet$ for $\bullet=\{\varnothing,\sigma,\xi\}$.

\begin{cor}\label{cor:burnlimwelldef}
  If $k\geq n+1$, then $\CRealize{F}_{k}$ is well-defined up
  to weak equivalence in $\topp$ (or $\ZZ_2\text{-}\topp$ if
  $\bullet=\xi$). In each case,
  $\CRealize{F}_{k+1}=\Sigma\CRealize{F}_{k}$.
\end{cor}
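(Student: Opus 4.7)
The plan is to deduce both assertions essentially for free from Proposition~\ref{prop:cube} together with property~\ref{itm:ho1} of homotopy colimits, once one unwinds the definition of $\CRealize{F}_{k}$ as $\hocolim\widetilde{F}_k^+$.

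For the well-definedness claim, I would start from two $k$-dimensional spatial refinements $\widetilde{F}_k^0$ and $\widetilde{F}_k^1$ of $F$ with $k\geq n+1$. Proposition~\ref{prop:cube}(\ref{itm:real2}) supplies a weak equivalence $\widetilde{F}_k^1\to\widetilde{F}_k^0$ of homotopy coherent diagrams $\two^n\to\topp$ (respectively, $\ZZ_2\text{-}\topp$ when $\bullet=\xi$). The diagrams $\widetilde{F}_k^{+,i}\from \two_+^n\to\topp$ by definition assign a one-point space to every $v\in\two_+^n\setminus\two^n$, so the weak equivalence extends by the identity on these basepoint objects to a weak equivalence $\widetilde{F}_k^{+,1}\to\widetilde{F}_k^{+,0}$ on $\two_+^n$. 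Applying~\ref{itm:ho1} then yields a weak equivalence $\CRealize{F}_k^1\to\CRealize{F}_k^0$, which is exactly the claim. In the $\bullet=\xi$ case the entire argument runs in $\ZZ_2\text{-}\topp$, using the equivariant version of~\ref{itm:real2} and the equivariant version of~\ref{itm:ho1}.

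For the suspension identity, I would use Proposition~\ref{prop:cube}(\ref{itm:real3}): the termwise reduced suspension $\Sigma\widetilde{F}_k$ is a $(k+1)$-dimensional spatial refinement of $F$, and since reduced suspension fixes the basepoint, the augmented diagram satisfies $\widetilde{F}_{k+1}^+ = \Sigma\widetilde{F}_k^+$. The key remaining observation is that reduced suspension $\Sigma=S^1\smash(-)$ commutes with homotopy colimits of based spaces. This is transparent from the explicit quotient formula in Definition~\ref{def:hoco}, since $S^1\smash(-)$ distributes over wedges and over the product-with-$[0,1]^n$ factors, and it respects each of the gluing relations. Hence
\[
\CRealize{F}_{k+1}=\hocolim\Sigma\widetilde{F}_k^+\;\cong\;\Sigma\hocolim\widetilde{F}_k^+=\Sigma\CRealize{F}_k,
\]
and the same identity holds $\ZZ_2$-equivariantly, since the $\ZZ_2$-action is independent of the $S^1$-factor.

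There is no substantive obstacle past this bookkeeping. The hard work was carried out inside Proposition~\ref{prop:cube}, where Lemma~\ref{lem:connctd} guarantees that the spaces of little-box configurations have enough connectivity (precisely in the range $k\geq n+1$) for any two refinements to be connected by a spatial refinement on $\two\times\two^n$. The only point that deserves a brief check is the equivariant case $\bullet=\xi$: both the existence of a $\xi$-equivariant refinement connecting $\widetilde{F}_k^0$ and $\widetilde{F}_k^1$, and the commutation of $\Sigma$ with $\hocolim$, must be verified in $\ZZ_2\text{-}\topp$, but the first is part of Proposition~\ref{prop:cube} and the second is formal because the $\ZZ_2$-action commutes with smashing with $S^1$.
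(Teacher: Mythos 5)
Your proof is correct and follows essentially the same approach as the paper's (which simply cites Proposition~\ref{prop:cube} and property~\ref{itm:ho1}, deferring to \cite[Corollary 5.6]{lls1}); you have simply spelled out the details—extending the weak equivalence over $\two^n_+$ by the identity on the basepoint objects, and noting that reduced suspension commutes with the explicit hocolim formula.
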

\begin{proof}
  As in \cite[Corollary 5.6]{lls1}, this follows from Proposition
  \ref{prop:cube} and properties of homotopy colimits
  \ref{itm:ho1}.
\end{proof}

The homotopy colimit $\CRealize{F}_{k}$ may be given several CW
structures.  First, from Definition \ref{def:hoco}, there is the
\emph{standard} CW structure, with cells $[0,1]^m \times B_x$,
parameterized by tuples $(f_m,\dots,f_1)$ subject to some relations.

We have a second CW structure on $\CRealize{F}_{k}$, the
\emph{fine} structure, which is obtained from the standard structure
by subdividing each cell $[0,1]^m\times B_x$ along the central
$(m+k-1)$-dimensional box $[0,1]^m\times B^\refl_{x}$, where
$B^\refl_x\subset B_x$ is the fixed-point set of the reflection
$\refl\from B_x\to B_x$ along the first coordinate. The fine CW
complex structure will be of relevance in \S\ref{subsec:equiv-constructions}.

There is also the \emph{coarse} cell structure of \cite[Section
6]{lls1}.  There they construct a CW structure on $\CRealize{F}_{k}$
for $F$ an unsigned Burnside functor, with cells formed from unions of
standard cells, so that there is exactly one (non-basepoint) cell
$\cell(x)$ for each $x\in\amalg_u F(u)$. In more detail, if $F_x$
denotes the Burnside sub-functor of $F$ generated by $x$, then the
subcomplex $\CRealize{F_x}_k$ of $\CRealize{F}_k$ is the image of
the cell $\cell(x)$.  The construction generalizes without changes to
give a CW structure on $\CRealize{F}_{k}$ for
$F\from\two^n\to\burn_\bullet$, with the same set of cells; when
$\bullet=\xi$, this produces a $\ZZ_2$-CW complex. Unless otherwise
specified, this is the default CW complex structure that we consider
on $\CRealize{F}_k$.

\begin{lem}\label{lem:cofib}
  A cofibration sequence $G\to F\to H$ of functors $\two^n\to
  \burn_\bullet$ (cf.~Definition~\ref{def:burn-cofib-sequence}), upon
  realization, induces a cofibration sequence of spaces. In general,
  any natural transformation $\eta\from F_1\to F_0$ of Burnside
  functors $\two^n\to\burn_\bullet$ induces a map on the realizations.
\end{lem}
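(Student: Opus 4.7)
The plan is to establish the second, more general claim first (that natural transformations induce maps on realizations) and then specialize to obtain the cofibration sequence. For a natural transformation $\eta\from F_1\to F_0$, recall from Definition~\ref{def:nattrans} that $\eta$ is itself a strictly unitary 2-functor $\two\times\two^n\to\burn_\bullet$. Applying Proposition~\ref{prop:cube} to $\eta$ for $k\geq n+2$ yields a $k$-dimensional spatial refinement $\widetilde{\eta}_k\from\two\times\two^n\to\topp$, whose restrictions to $\{0\}\times\two^n$ and $\{1\}\times\two^n$ are spatial refinements of $F_0$ and $F_1$, respectively. Extending to the augmented cube $\two\times\two^n_+$ by sending the extra objects to the basepoint gives a homotopy coherent diagram $\widetilde{\eta}^+_k$, which by property~\ref{itm:ho1} of homotopy colimits induces a map $\CRealize{F_1}_k=\hocolim\widetilde{\eta}^+_k|_{\{1\}\times\two^n_+}\to\hocolim\widetilde{\eta}^+_k|_{\{0\}\times\two^n_+}=\CRealize{F_0}_k$, well-defined up to homotopy by Corollary~\ref{cor:burnlimwelldef}; stabilizing gives the induced map of spectra.

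Next, I turn to the cofibration sequence $G\to F\to H$ from Definition~\ref{def:burn-cofib-sequence}. The key observation is that the sub-functor condition $s^{-1}(x)\subset G(\phi_{u,v})$ for $x\in G(u)$ guarantees that any box map refining $F(\phi_{u,v})$, restricted to the sub-wedge $\bigvee_{x\in G(u)}S^k\subset \bigvee_{x\in F(u)}S^k$, automatically lands in $\bigvee_{y\in G(v)}S^k$ and refines $G(\phi_{u,v})$. The dual condition $t^{-1}(y)\subset G(\phi_{u,v})$ for $y\in G(v)$ ensures that the pointwise quotient map, collapsing this sub-wedge to a point, refines $H(\phi_{u,v})$. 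I will construct $\widetilde{F}_k$ by the inductive procedure of Proposition~\ref{prop:cube}(\ref{itm:real1}), but at each step choosing the configuration of little boxes first over the elements of $G$, and then extending over the complement. This produces a spatial refinement $\widetilde{F}_k$ that contains a sub-refinement $\widetilde{G}_k$ for $G$ and whose pointwise quotient is a spatial refinement $\widetilde{H}_k$ for $H$, all simultaneously.

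Passing to realizations in the coarse CW structure of \S\ref{sec:cube-shaped-realize}, the subcomplex $\CRealize{G}_k\subset\CRealize{F}_k$ consists precisely of the cells $\cell(x)$ for $x\in\amalg_u G(u)$, and the quotient is naturally identified with $\CRealize{H}_k$: each standard cell $[0,1]^m\times B_x$ from Definition~\ref{def:hoco} with $x\in G(v_0)$ is collapsed to the basepoint, leaving exactly those indexed by $x\in H(v_0)$, and the attaching maps of the remaining cells are compatible because the box maps refining $H$ were obtained as quotients of the box maps refining $F$. This yields a cofibration sequence of based CW complexes $\CRealize{G}_k\to\CRealize{F}_k\to\CRealize{H}_k$, which passes to a cofibration sequence of spectra $\Realize{G}\to\Realize{F}\to\Realize{H}$; moreover, the two maps in it are each induced, via the previous paragraph, by the natural transformations $G\to F$ and $F\to H$.

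The main obstacle is the relative version of the inductive box-placement argument: given a spatial refinement of $G$ already in place, I must show that the inductive extension step of Proposition~\ref{prop:cube}(\ref{itm:real1}) still works when the boxes indexed by elements of $G$ are fixed in advance. Concretely, at the inductive step parameterized by a sequence $v_0\xrightarrow{f_1}\cdots\xrightarrow{f_{\ell+1}} v_{\ell+1}$, one needs to extend a prescribed map $S^{\ell-1}\to E(\{B_x\mid x\in F(v_0)\},s_{F(f_{\ell+1}\circ\cdots\circ f_1)})$ over the disk $[0,1]^\ell$, and this extension must restrict correctly over the already-built $G$ refinement. The relevant restricted configuration space is a fibration over the configuration space for $G$ with fibers homotopy equivalent to the configuration space for $H$, so the connectivity bound of Lemma~\ref{lem:connctd} is inherited and the extension exists for $k\geq n$. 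For $\bullet=\xi$, the same argument works $\ZZ_2$-equivariantly since the $\ZZ_2$-action merely permutes boxes indexed by distinct orbits.
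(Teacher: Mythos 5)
Your proposal is correct, but it is over-engineered in the cofibration part, and takes a genuinely different (and arguably cleaner) route in the natural-transformation part; let me address each in turn.

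For the second claim (natural transformations induce maps), the paper actually \emph{derives} it from the first claim: it views $\eta$ as a Burnside functor $\two^{n+1}\to\burn_\bullet$, observes that $(F_0)_{\iota_0}\to\eta\to(F_1)_{\iota_1}$ is a cofibration sequence of Burnside functors (where $\iota_i$ is the face inclusion to $\{i\}\times\two^n$), realizes that cofibration sequence, and then extracts the desired map as the Puppe connecting map $\Sigma\CRealize{F_1}_k\to\Sigma\CRealize{F_0}_k$ after identifying $\CRealize{(F_1)_{\iota_1}}_k\cong\Sigma\CRealize{F_1}_k$. Your route---take a spatial refinement of $\eta$ over $\two\times\two^n$ and apply property \ref{itm:ho1} of homotopy colimits directly---is different and more direct. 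Both yield the correct map, and yours avoids the suspension bookkeeping, although it requires care that the map coming out of \ref{itm:ho1} agrees (on cellular chains) with $\Tot(\eta)$ when this is needed later in Proposition~\ref{prop:totalization}; that check is implicit either way.

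For the first claim, your ``key observation'' that any box map refining $F(\phi_{u,v})$ automatically restricts to a box map refining $G(\phi_{u,v})$ (by $s^{-1}(x)\subset G(\phi_{u,v})$) and descends to a box map refining $H(\phi_{u,v})$ (by $t^{-1}(y)\subset G(\phi_{u,v})$) is exactly the content of the paper's one-line argument: an arbitrary spatial refinement $\widetilde F_k$ of $F$ automatically produces spatial refinements $\widetilde G_k$ of $G$ (by restriction) and $\widetilde H_k$ of $H$ (by quotienting). Your subsequent relative inductive construction of $\widetilde F_k$---building the $G$-boxes first and extending, and the whole ``main obstacle'' paragraph about fibrations of configuration spaces---is therefore unnecessary: once you have made the key observation, you can take \emph{any} spatial refinement of $F$ and it will contain the needed sub-refinement. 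In fact the ``fibration'' in question is simply a product projection, since a box $B_x$ with $x\in G(u)$ has \emph{all} of its sub-boxes prescribed by $G$, while a box $B_x$ with $x\in H(u)$ has \emph{none} of them prescribed by $G$, so $E(\{B_x \mid x\in F(v_0)\},s)$ factors as a product of the $G$-part and the $H$-part. The conclusion---$\CRealize{G}_k$ is a CW subcomplex of $\CRealize{F}_k$ with quotient $\CRealize{H}_k$---is what the paper asserts, and your verification of this in the coarse CW structure is fine (the paper works with the standard structure, but the subcomplex/quotient statement holds in either).
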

\begin{proof}
  Consider the standard CW complex structures.  For the first
  statement, a spatial refinement $\widetilde{F}_k$ of $F$ produces
  spatial refinements $\widetilde{G}_k$ of $G$ and $\widetilde{H}_k$
  of $H$; working with those refinements, it is an immediate
  consequence of the definitions that $\CRealize{G}_k$ is a CW
  subcomplex of $\CRealize{F}_k$ with quotient complex
  $\CRealize{H}_k$.  

  For the second part, if $\eta\from\two^{n+1}\to\burn_\bullet$ is the
  natural transformation, then $(F_0)_{\iota_0}$ is a subfunctor and
  $(F_1)_{\iota_1}$ is the corresponding quotient functor, where
  $\iota_i\from\two^n\to\two^{n+1}$ is the face inclusion to
  $\{i\}\times\two^n$. Therefore, we get a cofibration sequence
  \[
    \CRealize{(F_0)_{\iota_0}}_k\to\CRealize{\eta}_k\to\CRealize{(F_1)_{\iota_1}}_k.
  \]
  However, $\CRealize{(F_0)_{\iota_0}}_k=\CRealize{F_0}_k$, while
  $\CRealize{(F_1)_{\iota_1}}_k=\Sigma\CRealize{F_1}_k$ since
  $\CRealize{F_1}$ is constructed as a hocolim over $\two_+^n$, while
  $\CRealize{(F_1)_{\iota_1}}$ is constructed as a hocolim over
  $\two_+^{n+1}$. Therefore, the Puppe map
  \[
    \CRealize{(F_1)_{\iota_1}}_k=\Sigma\CRealize{F_1}_k=\CRealize{F_1}_{k+1}\to\CRealize{(F_0)_{\iota_0}}_k=\Sigma\CRealize{F_0}_k=\CRealize{F_0}_{k+1}
  \]
  is the required map.
\end{proof}

\begin{prop}\label{prop:totalization}
  If $F\from\two^n\to\burn_\bullet$, then its shifted reduced cellular
  complex $\redcellC(\CRealize{F}_{k})[-k]$ is isomorphic to the
  totalization $\Tot(F)$ with the cells mapping to the corresponding
  generators. If $\eta\from F_1\to F_0$ is a natural transformation of
  Burnside functors, then the map
  $\CRealize{F_1}_{k}\to\CRealize{F_0}_{k}$ is cellular, and the
  induced cellular chain map agrees with $\Tot(\eta)$.
\end{prop}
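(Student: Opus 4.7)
The plan is to work throughout with the coarse CW structure on $\CRealize{F}_{k}$ described just before the statement, which provides one non-basepoint cell $\cell(x)$ for each $v\in\two^n$ and each $x\in F(v)$, with $\dim\cell(x)=|v|+k$. This immediately gives a bijection between non-basepoint cells and a $\ZZ$-basis (or $\ZZ_u$-basis, when $\bullet=\xi$, since the $\ZZ_2$-action on $F(v)$ permutes the corresponding cells and makes $\redcellC(\CRealize{F}_k)$ into a complex of free $\ZZ_u$-modules) for $\Tot(F)$, respecting the homological grading after the shift by $-k$.

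The main content is to check that the cellular boundary maps match. For a cell $\cell(x)$ with $x\in F(u)$, I would first observe that because $\CRealize{F}_k$ is a homotopy colimit over $\two^n_+$, the attaching map of $\cell(x)$ lands in the $(|u|-1+k)$-skeleton and its non-zero components onto $\cell(y)$ occur only for $y\in F(v)$ with $u\geqslant_1 v$. For such an edge, the degree of the attaching map onto $\cell(y)$ is a product of two contributions: the sign $(-1)^{s_{u,v}}$ coming from the standard orientation of the cube face (this is exactly the sign built into $\partial$ in the definition of $\Tot(F)$), and for each $a\in F(\phi_{u,v})$ with $s(a)=x,\,t(a)=y$, a contribution determined by the box map refinement $\Phi(e_{\phi_{u,v}},F(\phi_{u,v}))$. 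In the unsigned case this contribution is $+1$, and summing over such $a$ recovers the coefficient of $y$ in $\Abelianize(F(\phi_{u,v}))(x)$. In the signed case, each $a$ contributes $\sigma(a)$, since by definition a signed box map precomposes with the orientation-reversing reflection $\refl$ of $S^k$ (of degree $-1$) precisely on those sub-boxes where $\sigma(a)=-1$. This exactly matches the formula in \S\ref{subsec:2funcs}.

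The principal obstacle is the careful bookkeeping of signs. The cleanest way to dispatch it is to reduce the signed case to the already-established unsigned computation of \cite[Proposition 6.1]{lls1}: an unsigned spatial refinement of $\forgot\circ F$ and a signed spatial refinement of $F$ use the same collections of sub-boxes, so the cellular chain complexes differ only by the factors $\sigma(a)$ arising from the reflections. The $\ZZ_2$-equivariant case ($\bullet=\xi$) reduces to the unsigned case as well by \ref{itm:ho5}, checking additionally that the $\ZZ_u$-linearity is preserved, which is automatic since the $\ZZ_2$-action permutes cells of the same source/target data.

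For the second assertion, a natural transformation $\eta\from F_1\to F_0$ is itself a functor $\two^{n+1}\to\burn_\bullet$, and by (the proof of) Lemma \ref{lem:cofib} the induced map $\CRealize{F_1}_{k+1}\to\CRealize{F_0}_{k+1}$ is the Puppe map of the cofibration sequence obtained by realizing the subfunctor--quotient pair associated to the two faces of $\two^{n+1}$. Puppe maps of cofibrations of coarse CW complexes are cellular, so the induced map is cellular. To identify the induced chain map with $\Tot(\eta)$, I would apply the first part of the proposition to $\eta$ viewed as a functor on $\two^{n+1}$: the connecting homomorphism of the short exact sequence $0\to\Tot((F_0)_{\iota_0})\to\Tot(\eta)\to\Tot((F_1)_{\iota_1})\to 0$ computed from the coarse CW structure is exactly the edge-component $\Tot(\eta)\from\Tot(F_1)\to\Tot(F_0)$ of the totalization, yielding the claimed agreement.
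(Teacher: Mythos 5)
Your argument follows essentially the same route as the paper, which simply appeals to \cite[Theorem 6]{lls1} and to Lemma~\ref{lem:cofib}: the coarse CW structure matches cells to generators, the reflection $\refl$ (degree $-1$ on $S^k$) contributes the factor $\sigma(a)$ to each piece of the attaching map so that the cellular boundary reproduces $\Abelianize(F(\phi_{u,v}))$, and the second part is read off from the $\two^{n+1}$-cofibration whose edge component is $\Tot(\eta)$. You have just made explicit the ``immediate generalization'' the paper invokes, and the reasoning is sound.
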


\begin{proof}
  The first statement is an immediate generalization of the
  corresponding statement for unsigned Burnside functors from
  \cite[Theorem 6]{lls1}. The second statement is also clear from the
  form of the map constructed in Lemma~\ref{lem:cofib}, using similar
  arguments.
\end{proof}

We can then package all these spaces together to construct a
\emph{finite CW spectrum}, by which we mean a pair $(X,r)$ (sometimes
written $\Sigma^r X$), where $X$ is a finite CW complex and $r\in\ZZ$;
one may view it as an object in the Spanier-Whitehead category, or as
$\Sigma^r (\Sigma^\infty X)$, the $r\th$ suspension of the suspension
spectrum of the finite CW complex $X$. One can take the reduced
cellular chain complex of a finite CW spectrum, whose chain homotopy
type is an invariant of the (stable) homotopy type of the CW
spectrum. Then, for a stable Burnside functor
$(F\from\two^n\to\burn_\bullet,r)$, after fixing a $k$-dimensional
spatial refinement $\widetilde{F}_k$, we may define its
\emph{realization} as the finite CW spectrum $\Realize{\Sigma^r
  F}=(\CRealize{F}_k,r-k)$.  

\begin{lem}\label{lem:functor-map-gives-space-map}
  Let $\Sigma^{r_1}F_1\to \Sigma^{r_2}F_2$ be a map of Burnside
  functors $F_1,F_2$.  Then there is an induced map of realizations
  \[
  \Realize{\Sigma^{r_1}F_1}\to \Realize{\Sigma^{r_2}F_2}
  \]
  well-defined up to homotopy equivalence.  If the map of Burnside
  functors is a stable equivalence, then the induced map is a homotopy
  equivalence. If $\bullet=\xi$, the induced map and the homotopy
  equivalence may be taken $\ZZ_2$-equivariant.
\end{lem}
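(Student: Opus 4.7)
The strategy is to construct the induced map step-by-step along the sequence $\{(F_i,r_i)\}$ in Definition~\ref{def:map-of-burnside-functors} and then compose. The odd-indexed steps are natural transformations $\eta\from F_i\to F_{i+1}$ with $r_i=r_{i+1}$: Lemma~\ref{lem:cofib} already produces an induced map $\CRealize{\eta}_k\from \CRealize{F_i}_k\to \CRealize{F_{i+1}}_k$ of realizations (cellular with respect to the standard CW structures by Proposition~\ref{prop:totalization}), which stabilizes to a map of spectra. So the bulk of the work is in the even-indexed steps, where $(F_i,r_i)$ and $(F_{i+1},r_{i+1})$ are related by a stable equivalence, and one must show that the constituent moves of a stable equivalence realize to homotopy equivalences.

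A stable equivalence breaks into two elementary types of moves. For a face inclusion $\iota\from\two^n\hookrightarrow\two^N$ with $F_{i+1}=(F_i)_\iota$, I would verify directly from Definition~\ref{def:hoco} that the extra vertices $v\in\two^N\setminus\iota(\two^n)$, on which $F_{i+1}(v)=\varnothing$, contribute a homotopy colimit that is, up to the basepoint, an $|\iota|$-fold suspension, giving a natural identification $\CRealize{F_{i+1}}_k\simeq \Sigma^{|\iota|}\CRealize{F_i}_k$; the shift condition $r_{i+1}=r_i-|\iota|$ then makes the corresponding realizations agree as spectra. For a natural transformation $\eta\from F_i\to F_{i+1}$ with $\Tot(\eta)$ a chain homotopy equivalence, I would use Proposition~\ref{prop:totalization} to identify the induced cellular chain map with $\Tot(\eta)$ up to shift, so $\CRealize{\eta}_k$ induces isomorphisms on reduced cellular homology; since for $k\geq n+1$ each $\CRealize{F_i}_k$ is a simply connected finite CW complex (it is a $k$-fold suspension, cf.~Corollary~\ref{cor:burnlimwelldef}), the cellular Whitehead theorem promotes this to a homotopy equivalence. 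This is the same argument used in \cite{lls1,lls2} for the unsigned case, and the passage to the signed Burnside category $\oddb$ is transparent because the signed box map refinements of \S\ref{sec:signed-box-maps} only affect the attaching maps, not the cell structure.

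The hard part, and the one that requires the most care, is the equivariant case when $\bullet=\xi$. Here I would invoke the $G$-Whitehead theorem from \S\ref{subsec:homtpy}: one needs the map on each fixed-point subcomplex to be a weak equivalence. Because the $\ZZ_2$-action on $\CRealize{F}_k$ acts freely on the non-basepoint cells (by \ref{itm:ho5} and the fact that each $F(v)$ is a free $\ZZ_2$-set), the fixed-point set is just the basepoint, so only the underlying non-equivariant map and the quotient map need be controlled. The underlying statement is handled by the preceding paragraph over $\ZZ$. For the quotient, property \ref{itm:ho5} identifies $\CRealize{F}_k/\ZZ_2$ with the realization of the quotient functor $\quotient\circ F\from \two^n\to \burn$, and a chain homotopy equivalence over $\ZZ_u$ descends to a chain homotopy equivalence between the underlying $\ZZ$-complexes that compute these quotients, so the non-equivariant Whitehead argument applies again. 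In the signed case, where equivariant equivalence is defined via $\oddeq$, one applies the same analysis to $\oddeq F_i$ in $\eqb$ and descends.

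Finally, well-definedness up to homotopy of the composite follows because each elementary step is well-defined up to homotopy: for natural transformations this is immediate from Corollary~\ref{cor:burnlimwelldef} (different spatial refinements give weakly equivalent realizations, hence homotopy equivalent CW spectra), and for face inclusions the identification $\CRealize{F_\iota}_k\simeq \Sigma^{|\iota|}\CRealize{F}_k$ is canonical up to the sign choices already accounted for in the shift. Composing the homotopy equivalences and maps obtained from each step gives the desired induced map $\Realize{\Sigma^{r_1}F_1}\to\Realize{\Sigma^{r_2}F_2}$, which is a homotopy equivalence whenever the map of Burnside functors is a stable equivalence, and is $\ZZ_2$-equivariant when $\bullet=\xi$.
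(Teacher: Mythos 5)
Your proof takes the same overall strategy as the paper: decompose the map of Burnside functors into natural transformations and stable-equivalence moves, handle natural transformations via Lemma~\ref{lem:cofib}, handle the chain-homotopy-equivalence moves via Proposition~\ref{prop:totalization} and Whitehead's theorem, and upgrade to the equivariant setting via the $G$-Whitehead theorem. You spell out the face-inclusion move explicitly (the paper leaves it implicit, since $\Tot(F_\iota)\cong\Sigma^{|\iota|}\Tot(F)$ and the shift in $r$ is built into Definition~\ref{def:stableq}), which is fine.

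One small wrinkle in the equivariant paragraph: you assert that, because the $\ZZ_2$-action is free away from the basepoint, one needs to control ``the underlying non-equivariant map and the quotient map.'' The $G$-Whitehead theorem requires weak equivalences on the $H$-fixed-point sets $X^H\to Y^H$ for all subgroups $H\leq G$; for a free $\ZZ_2$-action away from the basepoint the only nontrivial check is the underlying map ($H=\{1\}$), since $X^{\ZZ_2}$ is the basepoint. The quotient $X/\ZZ_2$ is not among the $X^H$, so your extra argument about it, while true, is not needed here and slightly misstates the hypotheses of the $G$-Whitehead theorem. The paper's proof checks only the underlying map and then cites freeness. Relatedly, your closing sentence about ``the signed case, where equivariant equivalence is defined via $\oddeq$'' conflates this lemma with the separate Lemma~\ref{lem:functor-map-gives-space-map-equivariant}; the present lemma asserts equivariance only for $\bullet=\xi$. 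Also a minor inaccuracy: $\CRealize{F}_k$ is not a $k$-fold suspension; it is a suspension for $k$ larger than the minimum dimension (Corollary~\ref{cor:burnlimwelldef} gives $\CRealize{F}_{k+1}=\Sigma\CRealize{F}_k$), and that suffices for simple connectivity. None of these affect the validity of your argument.
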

\begin{proof}
  First, associated to a natural transformation of Burnside functors,
  there is a well-defined space map by Lemma \ref{lem:cofib}.  Since
  all maps of Burnside functors are obtained as compositions of
  natural transformations and stable equivalences, we need only show
  that there is a well-defined (up to homotopy) homotopy equivalence
  of the realizations associated to a stable equivalence.

  For this, we must check that maps as in the items of Definition
  \ref{def:stableq} preserve the stable homotopy type of
  $\CRealize{F}_{k}$.  For a natural transformation $\eta\from F_i \to
  F_{i+1}$, Proposition~\ref{prop:totalization} produces a cellular
  map $\CRealize{F_i}_k \to \CRealize{F_{i+1}}_k$, which induces the
  map $\Tot(\eta)$ on the cellular chain complex.  The condition that
  $\Tot(\eta)$ is a chain homotopy equivalence implies that the map of
  spaces is a homotopy equivalence by Whitehead's theorem (we assume
  $k$ is sufficiently large so that all relevant spaces are simply
  connected), so has a homotopy inverse well-defined up to
  homotopy. When $\bullet=\xi$, the $\ZZ_2$-action is free (away from
  the basepoint), so by the $G$-Whitehead theorem, the homotopy
  equivalences may be taken equivariant.
\end{proof}

\subsection{Equivariant constructions}\label{subsec:equiv-constructions}
We now explain how to make the constructions of the previous
sections equivariant.  Recall that each $S^k=[0,1]^k/\partial$ carries a
natural $\ZZ_2$-action by $\refl$---reflection in the first
coordinate, as well as a $\ZZ_2$-action by $\dig$---composition of the
reflections in the first two coordinates (that is, a $180^\circ$
rotation in the first coordinate plane).
\begin{defn}\label{def:spacref-equivariant}
  Fix a small category $\Cat$ and a strictly unitary $2$-functor $F
  \from \Cat \to \burn_\bullet$.  A \emph{reflection-equivariant or
  $\refl$-equivariant $k$-dimensional spatial refinement} of $F$
  is a $\ZZ_2$-equivariant $k$-dimensional spatial refinement
  $\widetilde{F}_k \from \Cat \to \topp$ such that for any sequence of
  morphisms $u_0 \xrightarrow{f_1} \cdots \xrightarrow{f_n} u_n$ in
  $\Cat$ and any $(t_1,\dots,t_{n-1})\in [0,1]^{n-1}$ the map
  \[
  \widetilde{F}_k(f_n,\dots,f_1) (t_1,\dots,t_{n-1})\from \bigvee_{x\in F(u_0)} S^k \to \bigvee_{x\in F(u_n)}S^k
  \]
  is $\refl$-equivariant. When $\bullet=\xi$, we require each
  $\widetilde{F}_k$ to be $\xi$-equivariant as well; in this case,
  $\widetilde{F}_k$ is a $\ZZ_2\times\ZZ_2$-equivariant diagram, with
  the first factor (denoted $\ZZ^+_2$) acting by $\xi$ and the second
  factor (denoted $\ZZ^-_2$) acting by $\refl\circ\xi$.
\end{defn}

\begin{defn}\label{def:spacref-del-equivariant}
  Fix a small category $\Cat$ and a strictly unitary $2$-functor $F
  \from \Cat \to \oddb$.  A \emph{doubly-equivariant $k$-dimensional
    spatial refinement} of $F$ is a $\ZZ_2$-equivariant
  $k$-dimensional \emph{doubly signed} spatial refinement
  $\widehat{F}_k \from \Cat \to \topp$ such that for any sequence of
  morphisms $u_0 \xrightarrow{f_1} \cdots \xrightarrow{f_n} u_n$ in
  $\Cat$ and any $(t_1,\dots,t_{n-1})\in [0,1]^{n-1}$ the map
  \[
  \widehat{F}_k(f_n,\dots,f_1) (t_1,\dots,t_{n-1})\from \bigvee_{x\in F(u_0)} S^k \to \bigvee_{x\in F(u_n)}S^k
  \]
  is equivariant with respect to reflections in the first two
  coordinates; the $\ZZ_2$-action is given by $\refl$ (there is also a
  second $\ZZ_2$-action by reflection in the second coordinate, which
  we will ignore). Note that since we require the box map to be a
  \emph{doubly signed} refinement of $F(f_n\circ\dots\circ
  f_1)$---defined using $\widehat{\Phi}$ instead of $\Phi$ from
  \S\ref{sec:signed-box-maps}---a doubly-equivariant spatial
  refinement is \emph{not} a spatial refinement of $F$ in the usual
  sense.  We will always denote doubly equivariant spatial refinements
  by $\widehat{F}_k$ to distinguish them from $\refl$-equivariant
  spatial refinements $\widetilde{F}_k$, to which they are not
  homotopy equivalent (even nonequivariantly).
\end{defn}

We next record the equivariant version of Proposition \ref{prop:cube}:
\begin{prop}\label{prop:cube-equivariant}
  Let $\Cat$ be a small category in which every sequence of composable
  non-identity morphisms has length at most $n$, and let $F\from
  \Cat\to\burn_\bullet$ be a strictly unitary 2-functor.
\begin{enumerate}[leftmargin=*]
\item If $k\geq n+1$,
  the refinement of Proposition \ref{prop:cube} may also be constructed
  $\refl$-equivariantly, while if $k\geq n+2$, it may be constructed doubly equivariantly.  \label{itm:real1-equivariant}
\item If $k\geq n+2$, the weak equivalence from Proposition \ref{prop:cube}(\ref{itm:real2}) may be
  constructed $\refl$-equivariantly, while if $k\geq n+3$, it may be constructed doubly equivariantly. \label{itm:real2-equivariant}
\end{enumerate}
Moreover, if $\bullet=\xi$, there is a $\xi$- and $\refl$-equivariant
refinement of $F$ for $k\geq n+1$, and the weak equivalence from
Proposition \ref{prop:cube}(\ref{itm:real2}) may be constructed $\xi$-
and $\refl$-equivariantly for $k\geq n+2$.
\end{prop}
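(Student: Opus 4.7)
The plan is to repeat the inductive refinement construction from the proof of Proposition~\ref{prop:cube}, but with the parameter space of little boxes replaced by its symmetric variant $E_\sym$ (in the $\refl$-equivariant case) or its doubly-symmetric variant $E_\dsym$ (in the doubly equivariant case). Everything will follow from Lemmas~\ref{lem:connctd} and \ref{lem:box-composition}; the price paid is a loss in connectivity, which accounts for the larger values of $k$.

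For part~\ref{itm:real1-equivariant}, I would build a spatial refinement exactly as in the proof of Proposition~\ref{prop:cube}, starting by picking for each $x$ a box $B_x \subset \RR^k$ and for each morphism $f\from u\to v$ a configuration $e_f$, except that I now require $e_f\in E_\sym(\{B_x\},s_{F(f)})$ (respectively $E_\dsym$). Inductively, given configurations $e_{f_m,\dots,f_1}\in E_\sym$ (respectively $E_\dsym$) for sequences of length $\leq \ell$ satisfying the compatibility of Equation~\eqref{eq:homconds3}, the composition map of Lemma~\ref{lem:box-composition} restricts to $E_\sym\times E_\sym \to E_\sym$ and to $E_\dsym\times E_\dsym \to E_\dsym$, so the boundary assignment $\bdy([0,1]^\ell)\to E_\sym(\{B_x\},s_{F(f_{\ell+1}\circ\dots\circ f_1)})$ (respectively to $E_\dsym$) is defined. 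By Lemma~\ref{lem:connctd}, the space $E_\sym$ is $(k-3)$-connected and $E_\dsym$ is $(k-4)$-connected, so this extends over $[0,1]^\ell$ as long as $\ell-1\leq k-3$ (resp. $\leq k-4$), which for $\ell\leq n$ becomes $k\geq n+1$ (resp. $k\geq n+2$). The resulting box maps refine the required (signed, respectively doubly signed) correspondences, and symmetry of the sub-boxes automatically makes each $\widetilde{F}_k(f_n,\dots,f_1)(t_1,\dots,t_{n-1})$ equivariant with respect to $\refl$ in the first coordinate (in both the singly and doubly symmetric settings).

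For part~\ref{itm:real2-equivariant}, I would follow the same scheme as in Proposition~\ref{prop:cube}(\ref{itm:real2}): given two equivariant refinements $\widetilde{F}_k^0, \widetilde{F}_k^1$, form $G\from\two\times\Cat\to\burn_\bullet$ as the pullback of $F$ along the projection, and construct an equivariant spatial refinement $\widetilde{G}_k$ with prescribed restrictions to $\{0\}\times\Cat$ and $\{1\}\times\Cat$. The added $\two$-factor increases the maximal length of a composable chain by one, so the inductive extension now requires $\ell\leq n+1$, which by the same connectivity estimates demands $k\geq n+2$ in the $\refl$-equivariant case and $k\geq n+3$ in the doubly equivariant case. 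Since $\widetilde{G}_k$ refines the identity correspondence on each edge $\{1\to 0\}\times\{u\}$, it yields a $\refl$-equivariant (respectively doubly-equivariant) weak equivalence by property~\ref{itm:ho1} combined with the $\ZZ_2$-Whitehead theorem.

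For the final assertion, when $\bullet=\xi$ I would apply the doubling argument from the end of the proof of Proposition~\ref{prop:cube}. Choose a set-theoretic section of the quotient map $\coprod_u F(u)\to \coprod_u F(u)/\ZZ_2$ to obtain $G\from\Cat\to \oddb$ with $\oddeq\circ G\cong F$. Apply the $\refl$-equivariant construction from part~\ref{itm:real1-equivariant} to $G$, requiring $k\geq n+1$, and then double each box via $B_{(1,x)}=\{1\}\times B_x$, $B_{(\xi,x)}=\{\xi\}\times B_x$. The resulting configurations refine the unsigned correspondences of $F$, are automatically $\xi$-equivariant by construction, and remain $\refl$-equivariant because symmetry of the $B_x$'s is preserved by doubling. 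The same discussion applied to $\two\times\Cat$ gives the corresponding uniqueness statement for $k\geq n+2$. The main technical point throughout is simply bookkeeping: once one sees that Lemmas~\ref{lem:connctd} and \ref{lem:box-composition} are already formulated with $E_\sym$ and $E_\dsym$ in mind, no genuinely new obstruction appears beyond the connectivity loss that shifts the bounds on $k$.
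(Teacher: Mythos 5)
Your proposal is correct and follows the paper's proof essentially verbatim: restrict the little-box configurations to $E_\sym$ (resp.\ $E_\dsym$) throughout the inductive construction of Proposition~\ref{prop:cube}, invoking Lemma~\ref{lem:box-composition} for closure under composition and Lemma~\ref{lem:connctd} for the one- or two-unit loss in connectivity that accounts for the larger bounds on $k$, with the $\two\times\Cat$ device and the doubling-via-section argument handling part~(\ref{itm:real2-equivariant}) and the $\bullet=\xi$ case exactly as in the paper. One harmless off-by-one slip: since $e_{f_{\ell+1},\dots,f_1}$ is defined for composable chains of length $\ell+1\leq n$, the relevant range is $\ell\leq n-1$ rather than $\ell\leq n$; the bounds $k\geq n+1$ and $k\geq n+2$ you state at the end are nevertheless the correct ones.
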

\begin{proof}
  The proof is essentially identical to the proof of Proposition
  \ref{prop:cube}. To make each $\widetilde{F}_k(f_n,\dots,f_1)$
  $\refl$- or doubly-equivariant, simply stipulate that each map
  $e_{f_m,\dots,f_1}$ has image contained in $E_\sym(\{B_x\mid x\in
  F(v_0)\},s_{F(f_m\circ\dots\circ f_1)})$ or $E_{\dsym}(\{B_x\mid
  x\in F(v_0)\},s_{F(f_m\circ\dots\circ f_1)})$.
  \end{proof}

\begin{prop}\label{prop:equivdiag-equivariant}
  If $\widetilde{F}$ is an $\refl$-equivariant $k$-dimensional spatial
  refinement of $F\from\Cat\to\oddb$, then the following hold:
  \begin{enumerate}[leftmargin=*]
  \item The fixed point diagram $\widetilde{F}^{\ZZ_2}$ is a
    $(k-1)$-dimensional refinement of $\forgot\circ F\from\Cat\to\burn$.
  \item The orbit diagram $(\widetilde{F}/\widetilde{F}^{\ZZ_2})/{\ZZ_2}$ is a $k$-dimensional
    refinement of $\forgot\circ F\from\Cat\to\burn$.
  \item The quotient diagram $\widetilde{F}/\widetilde{F}^{\ZZ_2}$ is
    a $k$-dimensional refinement of $\oddeq\circ F\from\Cat\to\eqb$,
    with the $\refl$-action on $\widetilde{F}$ inducing the
    $\ZZ_2^+$-action on $\widetilde{F}/\widetilde{F}^{\ZZ_2}$.
  \end{enumerate}
  If $\widehat{F}$ is a doubly-equivariant $k$-dimensional spatial
  refinement of $F\from\Cat\to\oddb$, then the following hold:
  \begin{enumerate}[leftmargin=*]
  \item The fixed point diagram $\widehat{F}^{\ZZ_2}$ is a
    $(k-1)$-dimensional refinement of $F\from\Cat\to\oddb$.
  \item The orbit diagram $(\widehat{F}/\widehat{F}^{\ZZ_2})/{\ZZ_2}$ is a $k$-dimensional
    refinement of $F\from\Cat\to\oddb$.
  \item The quotient diagram $\widehat{F}/\widehat{F}^{\ZZ_2}$ is
    a $k$-dimensional refinement of $\oddeq\circ F\from\Cat\to\eqb$,
    with the $\refl$-action on $\widetilde{F}$ inducing the
    $\ZZ_2^-$-action on $\widetilde{F}/\widetilde{F}^{\ZZ_2}$.
  \end{enumerate}
  If $\widetilde{F}$ is a $k$-dimensional $\refl$-equivariant spatial
  refinement of $F\from\Cat\to\eqb$, the following hold:
  \begin{enumerate}[resume*]
  \item The orbit diagram $\widetilde{F}/{\ZZ_2^+}$ is a
    $k$-dimensional spatial refinement of $\quotient\circ
    F\from\Cat\to\burn$.  \label{itm:real4}
  \item Say $F=\oddeq\circ G$, and $\widetilde{F}$ is
    $\xi$-invariant. Then the orbit diagram
    $\widetilde{F}/{\ZZ_2^-}$ is a $\refl$-equivariant
    $k$-dimensional spatial refinement of $G\from\Cat\to\oddb$
    with the $\ZZ_2^+$-action on $\widetilde{F}$ inducing the
    $\refl$-action on $\widetilde{F}/{\ZZ_2^-}$. \label{itm:real5}
  \end{enumerate}
\end{prop}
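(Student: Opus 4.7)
The proof is a vertex-wise verification using the pointed-space identifications
\[
(S^k)^{\refl} \;=\; S^{k-1}, \qquad S^k/S^{k-1} \;\simeq\; S^k \vee S^k, \qquad (S^k \vee S^k)/\text{swap} \;=\; S^k,
\]
where the equatorial subsphere $\{x_1=1/2\}/\partial$ is the $\refl$-fixed set, and the middle quotient produces the two hemispheres wedged at the collapsed equator. For each of the eight claims I need to check two things at the level of an individual vertex $u\in\Cat$ and an individual $[0,1]^{n-1}$-parameter box map: that the assigned space is a wedge of spheres indexed by the correct set, and that the induced box map refines the correct (possibly signed, possibly $\xi$-equivariant) correspondence. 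Naturality in the parameters $(t_1,\dots,t_{n-1})$ and in the coherence maps of the homotopy coherent diagram is automatic, since fixed points, orbits, and quotients are continuous functors.

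First I would treat the $\refl$-equivariant case with $F\from\Cat\to\oddb$. At each vertex,
\[
\widetilde{F}^{\ZZ_2}(u) = \bigvee_{x\in F(u)} S^{k-1}, \qquad \widetilde{F}(u)/\widetilde{F}^{\ZZ_2}(u) = \bigvee_{(\epsilon,x)\in\{1,\xi\}\times F(u)} S^k,
\]
and the residual $\refl$-orbit of the latter is $\bigvee_{x\in F(u)} S^k$; these are precisely the indexing sets for $\forgot\circ F$, $\oddeq\circ F$, and $\forgot\circ F$. For a signed correspondence $(A,s,t,\sigma)$ the $\refl$-equivariant box map $\widetilde{F}(f)$ precomposes on sub-box $B_a$ by $\refl$ exactly when $\sigma(a)=-1$. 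Restricting to the fixed set trivializes those reflections, producing an unsigned refinement of $\forgot(F(f))$. On the hemisphere quotient, the $\refl$ on a negatively signed sub-box interchanges the two hemispheres of the target, so the $(1,x)$-hemisphere lands in the $(\sigma(a),t(a))$-hemisphere, which is exactly the target formula $t(1,a) = (\sigma(a),t(a))$ from \S\ref{subsec:equivburn}. The residual $\refl$-action swaps hemispheres pair-by-pair, which is the $\xi$-swap on $\{1,\xi\}\times F(u)$, i.e., the $\ZZ_2^+$-action. Taking its orbit collapses each hemisphere pair to one sphere, recovering the unsigned refinement of $\forgot(F(f))$.

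For the doubly-equivariant case the fixed set is still $\bigvee_x S^{k-1}$, but the crucial observation is that $\widehat{\Phi}$ precomposes negatively signed sub-boxes with $\dig$ rather than $\refl$, and $\dig$ restricted to $\{x_1=1/2\}$ is reflection in $x_2$ (hence in the first coordinate of $S^{k-1}$). So $\widehat{F}^{\ZZ_2}$ is a genuine $(k-1)$-dimensional \emph{signed} refinement of $F$, proving (4). On the hemisphere quotient, the target-set bookkeeping is identical to the $\refl$-case and again matches $\oddeq\circ F$; however, the residual $\refl$-action now, in addition to swapping hemispheres at negatively signed sub-boxes, retains a reflection inside each hemisphere (the $x_2$-component of $\dig$ survives the $x_1$-quotient). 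The net effect is hemisphere-swap composed with a reflection, i.e., $\refl\circ\xi = \ZZ_2^-$, establishing (5) and (6). Finally, for $F\from\Cat\to\eqb$, part (7) is immediate: the free $\ZZ_2^+$-action yields $\widetilde{F}(u)/\ZZ_2^+ = \bigvee_{[x]\in\quotient(F)(u)} S^k$, and the $\xi$-equivariant box maps descend. For (8), when $F=\oddeq\circ G$ and $\widetilde{F}$ is $\xi$-invariant, the $\ZZ_2^-$-orbit identifies the $(1,x)$-sphere with the $\refl$-image of the $(\xi,x)$-sphere, producing $\bigvee_{x\in G(u)} S^k$ with a residual $\refl$-action; running the analysis of (1)--(3) in reverse exhibits a $\refl$-equivariant signed refinement of $G$. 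The main obstacle is the bookkeeping in (4)--(6): tracking precisely why replacing $\refl$ by $\dig$ on signed sub-boxes promotes the hemisphere-swap to a swap-plus-reflection and so produces $\ZZ_2^-$ rather than $\ZZ_2^+$, while simultaneously preserving enough symmetry on the $\refl$-fixed equator to see the signs of $F$ on $\widehat{F}^{\ZZ_2}$.
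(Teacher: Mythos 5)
Your approach is the same as the paper's, which simply declares the proposition an immediate consequence of the definitions; you supply the vertex-wise unpacking, and your identifications $(S^k)^{\refl}=S^{k-1}$, $S^k/S^{k-1}\simeq S^k\vee S^k$ are exactly the geometry being invoked, so the argument is correct, including the key observation in (4) that $\dig|_{\{x_1=1/2\}}$ is reflection in $x_2$, hence in the first coordinate of $S^{k-1}$. One place where the wording should be tightened is (5)--(6): the residual $\refl$-action on $\widehat{F}/\widehat{F}^{\ZZ_2}$ is simply the hemisphere swap at every summand --- it is a global group action and does not see sub-boxes or signs --- so saying that it ``in addition to swapping hemispheres at negatively signed sub-boxes, retains a reflection inside each hemisphere'' conflates the group action with the box maps. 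What actually produces the $\ZZ_2^-$-action is the \emph{identification} of each lower hemisphere with $[0,1]^k/\partial$: to exhibit the quotient box maps as standard unsigned box maps refining $\oddeq\circ F$, one must absorb $\dig$ (rather than $\refl$, as in the $\refl$-equivariant case) into that identification, and relative to the resulting identification the hemisphere swap $\refl$ acquires a residual reflection, giving $\refl\circ\xi=\ZZ_2^-$ rather than $\xi=\ZZ_2^+$. Your conclusion is correct; this is just the more careful account of where the extra reflection enters.
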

\begin{proof}
  This is an immediate consequence of the definitions.
\end{proof}

As in \S\ref{sec:cube-shaped-realize}, given any such equivariant
spatial refinement $\widetilde{F}_k$ for
$F\from\two^n\to\burn_\bullet$, we construct a $G$-equivariant CW
complex $\CRealize{F}_k$ as $\hocolim(\widetilde{F}_k^+)$, where
$G=\ZZ_2$ if $\bullet=\sigma$ and $G=\ZZ_2^+\times\ZZ_2^-$ if
$\bullet=\xi$. By a \emph{$G$-equivariant CW complex}, we mean a
$G$-space carrying two different CW structures, so that it is a $G$-CW
complex with respect to the first structure, and we use the second
structure to define its cellular chain complex; equivalently, it is a
pair of a $G$-CW complex and a CW complex, along with a homeomorphism
connecting the two, and its cellular chain complex is defined to be
cellular chain complex of the second CW complex (and so, does not
automatically carry a $G$-action). In our construction, the fine
structure from \S\ref{sec:cube-shaped-realize} is the first CW
structure, while the standard structure is the second one. A caveat:
our definition of $G$-equivariant CW complex is quite non-standard.

As before, for any stable functor $(F,r)$, after fixing an equivariant
spatial refinement $\widetilde{F}_k$, we let $\Realize{\Sigma^r
  F}=(\CRealize{F}_k,r-k)$ denote the corresponding $G$-equivariant
finite CW spectrum.

Similarly, if $\widehat{F}_k$ is a $k$-dimensional doubly-equivariant
spatial refinement of $F\from\two^n\to\oddb$, we let
$\widehat{\CRealize{F}}_{k}$ denote the $\ZZ_2$-equivariant CW complex
$\hocolim(\widehat{F}_k^+)$. For any stable functor $(F,r)$, we let
$\widehat{\Realize{\Sigma^rF}}$ denote the corresponding finite
$\ZZ_2$-equivariant CW spectrum. We call
$\widehat{\Realize{\Sigma^rF}}$ the \emph{doubly-equivariant
  realization} of $(F,r)$, to avoid confusion with the ordinary
realization of $(F,r)$.

\begin{cor}\label{cor:burnlimwelldef-equivariant}
  Let $F\from\two^n\to\burn_\bullet$.
  \begin{enumerate}[leftmargin=*]
  \item If $\bullet=\sigma$, then
    $\Realize{F}^{\ZZ_2}=\Sigma^{-1}\Realize{\forgot\circ F}$,
    $\Realize{F}/\Realize{F}^{\ZZ_2}=\Realize{\oddeq \circ F}$ (with
    the $\ZZ_2$-action on the left equaling the $\ZZ_2^+$-action on
    the right), and
    $(\Realize{F}/\Realize{F}^{\ZZ_2})/{\ZZ_2}=\Realize{\forgot\circ
      F}$.
  \item If $\bullet=\xi$, then
    $\Realize{F}/\ZZ_2^+=\Realize{\quotient\circ F}$; if
    $F=\oddeq\circ G$, then $\Realize{F}/\ZZ_2^-=\Realize{G}$ (with
    the $\ZZ_2^+$-action on the left equaling the $\ZZ_2$-action on
    the right).
  \item If $\bullet=\sigma$, then
    $\widehat{\Realize{F}}^{\ZZ_2}=\Sigma^{-1}\Realize{F}$,
    $\widehat{\Realize{F}}/\widehat{\Realize{F}}^{\ZZ_2}=\Realize{\oddeq
      \circ F}$ (with the $\ZZ_2$-action on the left equaling the
    $\ZZ_2^-$-action on the right), and
    $(\widehat{\Realize{F}}/\widehat{\Realize{F}}^{\ZZ_2})/{\ZZ_2}=\Realize{F}$.
  \end{enumerate}
\end{cor}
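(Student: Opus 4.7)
The plan is to combine Proposition \ref{prop:equivdiag-equivariant} with the compatibility of homotopy colimits with fixed points and orbits recorded in properties \ref{itm:ho4} and \ref{itm:ho5}. First, I would fix an equivariant $k$-dimensional spatial refinement $\widetilde{F}_k$ of $F$, and in the doubly-equivariant case a refinement $\widehat{F}_k$, with $k$ large enough that Proposition \ref{prop:cube-equivariant} applies, and extend each to $\two_+^n$ in the usual way to obtain diagrams $\widetilde{F}_k^+,\widehat{F}_k^+$ whose homotopy colimits are the relevant CW complexes $\CRealize{F}_k$ and $\widehat{\CRealize{F}}_k$.

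For part (1), Proposition \ref{prop:equivdiag-equivariant} identifies $(\widetilde{F}_k^+)^{\ZZ_2}$ as a $(k-1)$-dimensional refinement of $\forgot\circ F$, the quotient $\widetilde{F}_k^+/(\widetilde{F}_k^+)^{\ZZ_2}$ as a $k$-dimensional refinement of $\oddeq\circ F$ (with the residual action being $\ZZ_2^+$), and the $\ZZ_2$-orbit of the latter as a $k$-dimensional refinement of $\forgot\circ F$. Property \ref{itm:ho4} then produces homeomorphisms
\[
\CRealize{F}_k^{\ZZ_2} \cong \hocolim\bigl((\widetilde{F}_k^+)^{\ZZ_2}\bigr), \qquad \CRealize{F}_k/\CRealize{F}_k^{\ZZ_2} \cong \hocolim\bigl(\widetilde{F}_k^+/(\widetilde{F}_k^+)^{\ZZ_2}\bigr);
\]
since the fixed-point diagram is only $(k-1)$-dimensional, passing to finite CW spectra $(\cdot,-k)$ yields the $\Sigma^{-1}$ on the $\forgot\circ F$ side, and the orbit identification follows from \ref{itm:ho5}. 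Part (2) is analogous, using the items of Proposition \ref{prop:equivdiag-equivariant} for $\bullet=\xi$ combined with \ref{itm:ho5}: the orbit $\widetilde{F}_k^+/\ZZ_2^+$ is a refinement of $\quotient\circ F$, and when $F=\oddeq\circ G$ the orbit $\widetilde{F}_k^+/\ZZ_2^-$ is a $\refl$-equivariant refinement of $G$, with the $\ZZ_2^+$-action descending to $\refl$. Part (3) follows the same recipe as part (1), but using the doubly-equivariant items of Proposition \ref{prop:equivdiag-equivariant}: the fixed points of $\widehat{F}_k^+$ refine $F$ itself rather than $\forgot\circ F$ because doubly signed box maps retain the sign data, and the residual action on the quotient is $\ZZ_2^-$ rather than $\ZZ_2^+$ because the doubled reflection swaps the two free factors of $\oddeq\circ F$.

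The main obstacle I anticipate is bookkeeping the CW structures: the realization $\Realize{H}$ of a Burnside functor is defined with a specific coarse cell structure, and the identifications above must hold as equalities (not merely weak equivalences) of CW spectra. The fine CW structure of \S\ref{sec:cube-shaped-realize}, in which each standard cell is subdivided along the $\refl$-fixed hyperplane, is tailor-made so that the $\refl$-action on $\CRealize{F}_k$ is cellular and the fixed subcomplex, the quotient, and the orbit spaces inherit obvious coarse cell structures agreeing with those of the realizations of the target functors on the nose; a careful check of this compatibility, together with keeping track of suspension shifts, constitutes the technical content of the proof.
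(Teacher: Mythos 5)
Your proposal is correct and takes essentially the same route as the paper, which simply cites Proposition~\ref{prop:cube-equivariant} together with the homotopy colimit properties \ref{itm:ho4}--\ref{itm:ho5}; your elaboration spells out how each item of Proposition~\ref{prop:equivdiag-equivariant} feeds into \ref{itm:ho4}/\ref{itm:ho5} and where the $\Sigma^{-1}$ comes from (the drop from $k$- to $(k-1)$-dimensional boxes on the fixed set), which is exactly the intended argument. Your closing remark about CW structures is also on target: the paper's notion of a ``$G$-equivariant CW complex'' (carrying the fine structure for the $G$-CW data and the standard structure for the cellular complex) is precisely what makes the identifications honest, so there is no gap to worry about.
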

\begin{proof}
This follows from Proposition \ref{prop:cube-equivariant} and the properties of homotopy colimits \ref{itm:ho4}---\ref{itm:ho5}.
\end{proof}

\begin{prop}\label{prop:totalization-equivariant}
  If $F\from\two^n\to\burn_\xi$, then the reduced cellular complex of
  its realization, $\redcellC(\Realize{F})$, is isomorphic, as a
  $\ZZ_u$-module, to the totalization $\Tot(F)$ with the cells mapping
  to the corresponding generators. If $F\from\two^n\to\burn_\sigma$,
  then the reduced cellular complex of its doubly-equivariant
  realization, $\redcellC(\widehat{\Realize{F}})$ is isomorphic to the
  totalization $\Tot(\forgot\circ F)$ with the cells mapping to the
  corresponding generators.
\end{prop}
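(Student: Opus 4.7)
The plan is to adapt the proof of Proposition~\ref{prop:totalization}, which is \cite[Theorem~6]{lls1}. There one identifies $\redcellC(\CRealize{F}_k)[-k]$ with $\Tot(F)$ by checking that, in the coarse CW structure, for each $u\geqslant_1 v$, $y\in F(v)$, and $x\in F(u)$, each element $a\in F(\phi_{u,v})$ with $s(a)=x$ and $t(a)=y$ contributes $(-1)^{s_{u,v}}\sigma(a)$ to the attaching degree of $\cell(y)$ over $\cell(x)$. The sign $\sigma(a)$ arises precisely because $\Phi(e,F(\phi_{u,v}))$ precomposes with the reflection $\refl_a$, which has degree $\sigma(a)$ on $\widetilde{H}_k(S^k)$.

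For part (i), the $\xi$-equivariance of the spatial refinement $\widetilde{F}_k$ guarantees that $\ZZ_2$ acts on $\CRealize{F}_k$ permuting the coarse cells $\cell(x)$ exactly as it acts on $\coprod_v F(v)$; since this action is free, picking a representative per orbit yields a $\ZZ_u$-basis of $\redcellC(\Realize{F})$ matching the $\ZZ_u$-basis of $\Tot(F)$, and equivariance of the box maps extends the above boundary formula $\ZZ_u$-linearly to recover the differential of $\Tot(F)$. For part (ii), I would repeat the same attaching-degree calculation with $\widehat{\Phi}$ in place of $\Phi$. The only change is that a sub-box $B_a$ with $\sigma(a)=-1$ is now attached via $\dig_a$ rather than $\refl_a$; since $\dig$ is a $180^\circ$ rotation in the first coordinate plane, it is orientation-preserving and acts by $+1$ on $\widetilde{H}_k(S^k)$. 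Hence the contribution of $a$ becomes $(-1)^{s_{u,v}}$ independent of $\sigma(a)$, yielding precisely the differential of $\Tot(\forgot\circ F)$.

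The main technical point requiring care is that the coarse CW structure of \cite[\S6]{lls1} extends to both equivariant settings. This should be essentially formal: the coarse cells $\cell(x)$ are built from the sub-box configurations alone, and in our refinements these configurations lie in the symmetric subspaces $E_\sym$ (for the $\refl$-equivariant case) or $E_{\dsym}$ (for the doubly-equivariant case). Consequently the coarse structure is preserved by the relevant group actions, and the degree computations of \cite[Theorem~6]{lls1} transfer verbatim once the role of $\refl$ is replaced by $\dig$ in part (ii).
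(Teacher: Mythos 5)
Your argument is correct and takes the same approach as the paper's proof, which simply defers to an inspection of \cite[Theorem~6]{lls1}: the non-equivariant content is Proposition~\ref{prop:totalization}, the $\ZZ_u$-module structure in part~(i) comes from the free $\xi$-action permuting the coarse cells (which the $\xi$-equivariance of the spatial refinement guarantees), and part~(ii) is the identical attaching-degree computation with the orientation-preserving $\dig$ in place of the orientation-reversing $\refl$, so that the signs $\sigma(a)$ drop out and one lands on $\Tot(\forgot\circ F)$. Your write-up supplies exactly the details the paper's one-line proof leaves implicit.
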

\begin{proof}
  The statement is, nonequivariantly, just Proposition
  \ref{prop:totalization}. The isomorphism as $\ZZ_u$-modules follows
  from inspection of the proof of \cite[Theorem 6]{lls1}. The second
  statement is proved as in Proposition \ref{prop:totalization}.
\end{proof}

\begin{lem}\label{lem:functor-map-gives-space-map-equivariant}
  Let $\Sigma^{r_1}F_1\to \Sigma^{r_2}F_2$ be an equivariant map
  between stable odd Burnside functors $(F_1\from\two^{n_1}\to\oddb,r_1)$
  and $(F_2\from\two^{n_2}\to\oddb,r_2)$. Then there is an induced
  $\ZZ_2$-equivariant map of equivariant realizations
  \[
  \Realize{\Sigma^{r_1}F_1}\to \Realize{\Sigma^{r_2}F_2}, 
  \]
  an induced $\ZZ_2$-equivariant map of doubly-equivariant
  realizations
  \[
  \widehat{\Realize{\Sigma^{r_1}F_1}}\to \widehat{\Realize{\Sigma^{r_2}F_2}}, 
  \]
  and an induced $\ZZ_2^+\times\ZZ_2^-$-equivariant map of equivariant
  realizations 
  \[
  \Realize{\Sigma^{r_1}\oddeq F_1}\to \Realize{\Sigma^{r_2}\oddeq F_2},
  \]
  all well-defined up to homotopy equivalence.  If the map of Burnside
  functors is an equivariant equivalence, then these induced maps are
  equivariant homotopy equivalences.
\end{lem}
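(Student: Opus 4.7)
The plan is to mirror the proof of Lemma \ref{lem:functor-map-gives-space-map}, upgrading every step to the equivariant setting. By Definition \ref{def:map-of-burnside-functors}, an equivariant map of Burnside functors is assembled from a finite alternating sequence of equivariant stable equivalences (in the sense of Definition \ref{def:stableq}) and natural transformations, so it suffices to treat these two kinds of building blocks and then compose. For each building block I will produce three equivariant maps of CW spectra, one for each of $\Realize{\cdot}$, $\widehat{\Realize{\cdot}}$, and $\Realize{\oddeq(\cdot)}$, and show that these maps are equivariant homotopy equivalences whenever the building block is an equivariant stable equivalence.

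For a natural transformation $\eta\from F_1\to F_0$, viewed as a functor $\two^{n+1}\to\oddb$, I will invoke Proposition \ref{prop:cube-equivariant} to choose a spatial refinement $\widetilde{\eta}_k$ that is $\refl$-equivariant and simultaneously gives a $\ZZ_2^+\times\ZZ_2^-$-equivariant refinement of $\oddeq\circ\eta$, together with a doubly-equivariant refinement $\widehat{\eta}_k$. The Puppe construction from the proof of Lemma \ref{lem:cofib} then produces, for each version of realization, a cellular map that respects all the ambient $\ZZ_2$- or $\ZZ_2^+\times\ZZ_2^-$-actions. For the face inclusions appearing in a stable equivalence, the relevant maps are literally identifications of subspectra after suspension, and equivariance is automatic.

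The heart of the argument is to show that an equivariant stable equivalence induces an equivariant homotopy equivalence on each realization. By Definition \ref{def:stableq}, an equivariant equivalence is one in which each natural transformation $\eta$ satisfies the condition that $\Tot(\oddeq \eta)$ is a chain homotopy equivalence over $\ZZ_u$. Since the pullback diagram \eqref{eq:pullback} gives canonical isomorphisms $\Tot(\oddeq F)\otimes_{\ZZ_u}\ZZ_e\cong\Tot(\forgot\circ F)$ and $\Tot(\oddeq F)\otimes_{\ZZ_u}\ZZ_o\cong\Tot(F)$, this implies that both $\Tot(\forgot\circ\eta)$ and $\Tot(\eta)$ are also chain homotopy equivalences. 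Proposition \ref{prop:totalization-equivariant} identifies these totalizations with the reduced cellular chain complexes of $\Realize{\forgot\circ F}$, $\Realize{F}$, $\widehat{\Realize{F}}$, and $\Realize{\oddeq\circ F}$, so by Whitehead's theorem the induced nonequivariant maps are homotopy equivalences. Combined with the identifications from Corollary \ref{cor:burnlimwelldef-equivariant} — which compute fixed points, quotients by fixed points, and orbit spaces of each equivariant realization in terms of the nonequivariant realizations of $F$, $\forgot\circ F$, and $\oddeq\circ F$ — the $G$-Whitehead theorem recorded in \S\ref{subsec:homtpy} upgrades the nonequivariant homotopy equivalences to equivariant ones, on each of the three flavors of realization.

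The main obstacle I anticipate is the bookkeeping needed to make the three equivariant structures simultaneously well-defined. The dimensional constraints of Proposition \ref{prop:cube-equivariant}, in particular items \ref{itm:real1-equivariant} and \ref{itm:real2-equivariant}, force stabilizing $k$ sufficiently far beyond each $n_i$ in the chain from Definition \ref{def:map-of-burnside-functors} in order to carry all three equivariant refinements and all relevant equivariant uniqueness statements; this is routine, but must be performed uniformly along the whole chain so that compositions of the building blocks yield well-defined maps of spectra up to equivariant homotopy, exactly as in the nonequivariant Lemma \ref{lem:functor-map-gives-space-map}.
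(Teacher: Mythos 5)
Your proof takes essentially the same approach as the paper's: you reduce to the moves of Definition \ref{def:stableq}, use Proposition \ref{prop:totalization-equivariant} and Corollary \ref{cor:burnlimwelldef-equivariant} to identify the maps on cellular chains and on fixed-point/orbit spaces, and then invoke the $G$-Whitehead theorem to upgrade nonequivariant equivalences to equivariant ones. The paper sketches only the first of the three cases and leaves the implication ``$\Tot(\oddeq\eta)$ a $\ZZ_u$-chain homotopy equivalence $\Rightarrow$ $\Tot(\forgot\eta)$ and $\Tot(\eta)$ are chain homotopy equivalences'' implicit, whereas you make that step explicit via base change $-\otimes_{\ZZ_u}\ZZ_e$ and $-\otimes_{\ZZ_u}\ZZ_o$; this is a correct and useful clarification but not a different route.
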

\begin{proof}
  Let us sketch the arguments in the first case. The other two cases
  are similar.

  First, note that Lemma \ref{lem:cofib} can be made to hold
  equivariantly, so that associated to a natural transformation of
  signed Burnside functors there is an equivariant map.  So we only
  need to show that associated to an equivariant equivalence of
  Burnside functors, there is a well-defined equivariant stable
  homotopy equivalence of their realizations.

  It will suffice to show that each of the moves in
  Definition~\ref{def:stableq} induces an equivariant stable homotopy
  equivalence.  For the stabilization move this is clear.  For the
  first move, assume that we have a natural transformation $\eta$,
  with $\Tot(\oddeq \eta)$ a homotopy equivalence over $\ZZ_u$.  By
  Proposition \ref{prop:totalization} and Lemma
  \ref{lem:functor-map-gives-space-map}, the induced map between
  realizations is cellular and induces a non-equivariant homotopy
  equivalence.  The induced map on fixed-point sets is induced by the
  underlying natural transformation $\forgot \eta$ using the
  identification of Corollary \ref{cor:burnlimwelldef-equivariant},
  which is a homotopy equivalence of chain complexes since
  $\Tot(\oddeq \eta)$ is.  By the proof of Lemma
  \ref{lem:functor-map-gives-space-map}, and using Corollary
  \ref{cor:burnlimwelldef-equivariant} again, this induced map on
  fixed-point sets is a homotopy equivalence.  By the $G$-Whitehead
  theorem, the induced map is a $\ZZ_2$-homotopy equivalence.
\end{proof}

We record the behavior under coproducts.
\begin{lem}\label{lem:product-realization-fixed}
  Let $F_1,F_2 \from \two^{n} \to \burn_\bullet$.  Then $\Realize{F_1
    \amalg F_2}$ is equivariantly homeomorphic to
  $\Realize{F_1}\vee\Realize{F_2}$. If $\bullet=\sigma$, then
  $\widehat{\Realize{F_1 \amalg F_2}}$ is equivariantly homeomorphic
  to $\widehat{\Realize{F_1}}\vee\widehat{\Realize{F_2}}$
\end{lem}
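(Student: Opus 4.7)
The plan is to construct a spatial refinement of $F_1\amalg F_2$ by wedging together spatial refinements of $F_1$ and $F_2$ componentwise, and then apply property \ref{itm:ho2} of homotopy colimits.

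First I would choose $k$-dimensional spatial refinements $\widetilde{(F_1)}_k$ and $\widetilde{(F_2)}_k$ using Proposition~\ref{prop:cube} (or the equivariant Proposition~\ref{prop:cube-equivariant} in the cases $\bullet=\sigma$ or $\bullet=\xi$). Since $(F_1\amalg F_2)(v)=F_1(v)\amalg F_2(v)$, we have a canonical identification
\[
\bigvee_{x\in(F_1\amalg F_2)(v)}S^k\;=\;\Bigl(\bigvee_{x\in F_1(v)}S^k\Bigr)\vee\Bigl(\bigvee_{x\in F_2(v)}S^k\Bigr).
\]
Because the coproduct correspondence $F_1(\phi_{u,v})\amalg F_2(\phi_{u,v})$ has its source, target, and (if $\bullet=\sigma$) sign maps defined componentwise, and its $2$-morphisms are also defined componentwise (cf.~\S\ref{subsec:prod}), the wedge $\widetilde{(F_1)}_k\vee\widetilde{(F_2)}_k$ of homotopy coherent diagrams defined by
\[
(\widetilde{(F_1)}_k\vee\widetilde{(F_2)}_k)(f_n,\dots,f_1)(t_1,\dots,t_{n-1})\;=\;\widetilde{(F_i)}_k(f_n,\dots,f_1)(t_1,\dots,t_{n-1}) \text{ on the $i$-th summand}
\]
is itself a $k$-dimensional spatial refinement of $F_1\amalg F_2$ in the sense of Definition~\ref{def:spacref}. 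If the $\widetilde{(F_i)}_k$ are chosen $\refl$-equivariant (respectively, $\xi$-equivariant), then the wedge is automatically equivariant, since the relevant actions were defined componentwise on $F_1\amalg F_2$.

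Extending by basepoint on $\two^n_+\setminus\two^n$ respects the wedge (as the wedge of two basepoints is a basepoint), so $\widetilde{(F_1\amalg F_2)}_k^+=\widetilde{(F_1)}_k^+\vee\widetilde{(F_2)}_k^+$ as homotopy coherent diagrams $\two^n_+\to\topp$. Applying the homotopy-colimit-of-wedges identification \ref{itm:ho2}, we get an equivariant homeomorphism
\[
\CRealize{F_1\amalg F_2}_k\;\cong\;\CRealize{F_1}_k\vee\CRealize{F_2}_k,
\]
and since $\Realize{F_1\amalg F_2}=(\CRealize{F_1\amalg F_2}_k,-k)$, passing to spectra gives the desired equivariant homeomorphism $\Realize{F_1\amalg F_2}\cong\Realize{F_1}\vee\Realize{F_2}$. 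The doubly-equivariant case for $\bullet=\sigma$ is identical, replacing $\widetilde{(F_i)}_k$ by doubly-equivariant refinements $\widehat{(F_i)}_k$ from Proposition~\ref{prop:cube-equivariant}, and using the doubly-equivariant form of property \ref{itm:ho2}.

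There is no serious obstacle; the mild technical point to verify is just that the wedge of refinements really is a refinement, which amounts to checking that the box maps refining $F_1(\phi_{u,v})\amalg F_2(\phi_{u,v})$ coming from separately chosen sub-box configurations $e_{f_m,\dots,f_1}^{(1)}$ and $e_{f_m,\dots,f_1}^{(2)}$ automatically sit inside $E(\{B_x\mid x\in F_1(v_0)\amalg F_2(v_0)\},s)$ with disjoint interiors (which is immediate, since the two families of sub-boxes live in disjoint subsets of ambient boxes indexed by $F_1(v_0)$ and $F_2(v_0)$ respectively) and satisfy the coherence conditions componentwise.
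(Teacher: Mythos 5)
Your proposal is correct and follows the same route as the paper's proof, which simply cites property \ref{itm:ho2} together with the coproduct construction of \S\ref{subsec:prod} as an ``immediate consequence.'' You have just unpacked the one-line argument: the wedge of spatial refinements is a spatial refinement of the coproduct, and then \ref{itm:ho2} identifies the homotopy colimits.
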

\begin{proof}
  The statement is an immediate consequence of \ref{itm:ho2} and
  \S\ref{subsec:prod}.
\end{proof}

\begin{rmk}\label{rmk:other-homotopy-types}
  In fact, for $F\from\two^n\to\oddb$ and any $\ell\geq 0$, the
  constructions of the present section may be carried out using
  reflection in the first $\ell$ coordinates to produce a
  $\ZZ_2^\ell$-equivariant CW-spectrum $\Realize{F}^\ell$; we have
  encountered the first few cases: $\Realize{\forgot
    F}=\Realize{F}^0$, $\Realize{F}=\Realize{F}^1$,
  $\widehat{\Realize{F}}=\Realize{F}^2$.  Its cellular complex equals the
  totalization $\Tot(\forgot F)$ if $\ell$ is even and $\Tot(F)$ if
  $\ell$ is odd.  Proposition \ref{prop:equivdiag-equivariant}, as
  well as its corollaries above, readily generalizes to these
  realizations, and the entire family is related by iterated quotients
  (or fixed point sets) under the various actions.
\end{rmk}

\section{Khovanov homotopy types}\label{sec:oddkh}
In this section, we construct the odd Khovanov Burnside functor, and
the odd Khovanov homotopy type as its realization. We also construct a
reduced odd Khovanov homotopy type and the unified Khovanov homotopy
type. We establish various properties such as fixed
point constructions and cofibration sequences. We also construct
several concordance invariants following standard procedure.

\subsection{The odd Khovanov Burnside functor}\label{subsec:def}
In this section, we define a functor to the signed Burnside category
associated to an oriented link diagram $L$ with oriented crossings.
After ordering the $n$ crossings of $L$, we will identify the vertices
of the hypercube of resolutions of $\diagram$ with the objects of
$\two^n$, and the edges with the length one arrows of $\two^n$.

To define the odd Khovanov Burnside functor $F_o\from\two^n\to\oddb$,
following Lemma \ref{lem:212}, we need only define it on objects,
length one morphisms, and across two-dimensional faces of the cube
$\two^n$.  On objects we set
\[
F_{o}(u)=\KhGen(u).
\]
For each edge $u\geqslant_1 v$ in $\two^n$, and each element
$y \in F_{o}(v)$, write
\[
\AbFunc_{o}(\phi^{\op}_{v,u})(y)=\sum_{x\in F_{o}(u) } \epsilon_{x,y} x,
\]
where $\AbFunc_o$ is the odd Khovanov functor from
\S\ref{sec:threekhfunc}. Note each $\epsilon_{x,y}\in \{ -1,0,1\}$.
Define
\[
F_{o}(\phi_{u,v})=\{ (y,x) \in F_{o}(v) \times F_{o}(u) \mid
\epsilon_{x,y}=\pm 1 \},
\]
where the sign on elements of $F_{Kh'}(\phi_{u,v})$ is given by
$\epsilon_{x,y}$ of the pair, and the source and target maps are the
natural ones.

We need only define the $2$-morphisms across $2$-dimensional faces.
In fact, in contrast to the case of even Khovanov homology, where a
global choice is necessary in order to define the $2$-morphisms
\cite{lshomotopytype}, in odd Khovanov homology there is a unique
choice of $2$-morphisms compatible with the preceding data.  To be
more specific, for any $2$-dimensional face
$u\geqslant_1v,v'\geqslant_1w$, and any pair
$(x,y) \in F_{o}(u) \times F_{o}(w)$, there is a unique bijection
between
\[
A_{x,y}:=s^{-1}(x)\cap t^{-1}(y)\subset F_{o}(\phi_{v,w})\times_{F_{o}(v)} F_{o}(\phi_{u,v})
\] 
and
\[
A_{x,y}':=s^{-1}(x)\cap t^{-1}(y)\subset F_{o}(\phi_{v',w})\times_{F_{o}(v')} F_{o}(\phi_{u,v'})
\]
that preserves the signs.  (That is, the signed sets
$A_{x,y},A_{x,y}'$ both have at most one element of any given sign).

The last assertion may be checked on a case-by-case basis, using
\cite[Figure 2]{ors}.  Away from ladybug configurations (i.e.,
configurations of type X and Y) the involved sets both have at most
one element (and $\AbFunc_o$ commutes across 2d faces), so the result
is automatic.  For ladybug configurations, there are sets
$A_{x,y}, A_{x,y}'$ with two elements, but the elements have opposite
sign, and so there is still a unique matching.

Next, we observe that the compatibility relation demanded by Lemma
\ref{lem:212} is satisfied by $F_{o}$ (using the unique bijections
across $2$-faces).  For this, we must consider $3$-dimensional faces
$\iota \from \two^3 \to \two^n$, and a choice of elements
$x \in F_{o}(\iota(1,1,1)),y\in F_{o}(\iota(0,0,0))$ and the
correspondence $A_{x,y}$.  There are six distinct decompositions of
the arrow $(1,1,1) \to (0,0,0) $ in $\two^3$ into a composition of
nonidentity arrows, corresponding to permutations of $\{1,2,3\}$.
Specifically, if $e_i$ denotes the arrow $1\to 0$ in the $i\th$-factor
of $\two^3$, the permutation $\sigma$ corresponds to the composition
$e_{\sigma(3)}\circ e_{\sigma(2)}\circ e_{\sigma(1)}$.  These
compositions are in turn related by $2$-morphisms
\[
F_{i,j}\from F(e_{i})\circ F(e_{j}) \to F(e_{j})\circ F(e_{i}).
\]
The compatibility relation of Lemma \ref{lem:212} boils down to the
condition that the following diagram commutes:
\begin{center}
\begin{tikzpicture}[xscale=0.8,yscale=0.7]
    \def\radius{3.7cm} 
    \node (h0A) at (60:\radius)   {$F_{e_3}\!\!\circ\! F_{e_2}\!\!\circ\! F_{e_1}$};
    \node (h0C) at (0:\radius)    {$F_{e_2}\!\!\circ\! F_{e_3} \!\!\circ\! F_{e_1}$};
    \node (h1B) at (-60:\radius)  {$F_{e_2}\!\!\circ\! F_{e_1} \!\!\circ\! F_{e_3}$};
    \node (h1A) at (-120:\radius) {$F_{e_1} \!\!\circ\! F_{e_2} \!\!\circ\! F_{e_3}$};
    \node (h1C) at (180:\radius)  {$F_{e_1} \!\!\circ\! F_{e_3} \!\!\circ\! F_{e_2}$};
    \node (h0B) at (120:\radius)  {$F_{e_3} \!\!\circ\! F_{e_1}\!\!\circ\! F_{e_2}$};

    \draw[->]
        (h0A) edge node[auto] {\tiny $F_{32} \times \Id$} (h0C)
        (h0C) edge node[auto] {\tiny $\Id \times F_{31}$} (h1B)
        (h1B) edge node[auto] {\tiny  $F_{21} \times \Id$} (h1A)
        (h1A) edge node[auto] {\tiny $\Id \times F_{23}$} (h1C)
        (h1C) edge node[auto] {\tiny $F_{13} \times \Id$} (h0B)
        (h0B) edge node[auto] {\tiny $\Id \times F_{12}$} (h0A);
\end{tikzpicture}
\end{center}

However, it turns out that for any choice of $x,y$ as above, there is
at most one element of a given sign in
$A^\sigma_{x,y}:=s^{-1}(x)\cap t^{-1}(y)\subset F(e_{\sigma(3)})\circ
F(e_{\sigma(2)})\circ F(e_{\sigma(1)})$, and therefore, the coherence
check is automatic.  To see this is a simple enumeration of all
possible options. In more detail, following \cite{lshomotopytype}, for
3d configurations that do not contain ladybug configurations on any of
their 2d faces, each of the six sets $A^\sigma_{x,y}$ contain at most
one element.  For the remaining configurations, it is shown in
\cite{lshomotopytype} that each of the six sets $A^\sigma_{x,y}$
contain at most two elements. However, since these remaining
configurations contain ladybugs, these two elements must have opposite
signs. (Recall that if $u\geqslant_1 v,v'\geqslant_1w$ is a ladybug
configuration and
$s^{-1}(x)\cap t^{-1}(y)\subset F_o(\phi_{v,w})\circ F_o(\phi_{u,v})$ is
non-empty, then it consists of two oppositely signed points.)
Therefore, each of the six sets $A^\sigma_{x,y}$ contains at most one
element of each sign for the remaining configurations as well.

\begin{defn}\label{def:kh-odd-burnside-functor}
  Define the stable signed Burnside functor associated to an oriented
  link diagram $L$ with $n$ oriented crossings and a choice of edge
  assignment to be $(F_o,-n_-)$, where $F_o\from\two^n\to\oddb$ is the
  functor defined above, and $n_-$ is the number of negative crossings
  in $L$. Since the differential on the Khovanov chain complex
  respects the quantum grading, cf.~\S\ref{sec:khovanovhomology}, the
  odd Khovanov Burnside functor splits as a coproduct of functors, one
  in each quantum grading:
  \[
  F_{o}=\coprod_j F^j_{o}.
  \]
  The total complex of the odd
  Khovanov Burnside functor $\Sigma^{-n_-}F^j_{o}$ agrees with the
  dual of the odd Khovanov chain complex:
  \[
  (\Tot(\Sigma^{-n_-}F^j_{o}))^*=\KhCx^{*,j}_o(L).
  \]
\end{defn}
\begin{defn}\label{def:oddkhmain}
  We define the \emph{odd Khovanov spectrum}
  $\khoo(L)=\bigvee_j\khoo^j(L)$ as a $\ZZ_2$-equivariant finite CW
  spectrum, where $\khoo^j(L)$ is a realization of the stable signed
  Burnside functor $\Sigma^{-n_-}F^j_{o}$
\end{defn}
This odd functor recovers the even functor $F_e\from\two^n\to\burn$
from \cite{lls1,lls2} as follows.
\begin{prop}\label{prop:odd-recovers-even}
  The functors $F_o$ and $F_e$ satisfy $\forgot\circ F_{o}=F_{e}$,
  where $\forgot\from\oddb\to\burn$ is the forgetful functor from
  Figure~\ref{fig:burnsidecategories}.
\end{prop}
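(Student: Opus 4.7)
The plan is to verify the equality directly using Lemma \ref{lem:212}, which reduces the check to three pieces of data: objects, 1-morphisms across edges, and 2-morphisms across 2-dimensional faces of $\two^n$.

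On objects, $F_o(u) = \KhGen(u) = F_e(u)$ by construction, giving immediate equality. On an edge $u \geqslant_1 v$, I would observe that the signed correspondence $F_o(\phi_{u,v})$ has underlying set $\{(y,x)\in F_o(v)\times F_o(u) : \epsilon_{x,y}=\pm 1\}$, where $\epsilon_{x,y}$ is the coefficient of $x$ in $\AbFunc_o(\phi^\op_{v,u})(y)$, and that $F_e(\phi_{u,v})$ is defined analogously via $\AbFunc_e$. Since $\AbFunc_e$ and $\AbFunc_o$ are obtained from $\AbFunc_u$ by specializing $\xi=+1$ and $\xi=-1$ respectively, and since inspection of the split formula $(a_1+\xi a_2)\otimes x$, the merge algebra map, the edge assignment values in $\{1,\xi\}$, the canonicalization of tensor factors in $\Lambda_u$, and the standard sign assignment together show that every nonzero coefficient of $\AbFunc_u$ on standard generators lies in $\{\pm 1,\pm \xi\}$, both specializations have the same support. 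Hence $\forgot F_o(\phi_{u,v})=F_e(\phi_{u,v})$ as correspondences, with identical source and target maps.

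For 2-morphisms across a 2-dimensional face $u\geqslant_1 v,v'\geqslant_1 w$, I would separate into the four types from \ref{diagram:type}. For types A and C, $\AbFunc_o$ commutes across the face, so each of the fibers $A_{x,y}$ and $A'_{x,y}$ contains at most one element and the 2-morphism is forced in both $F_o$ and $F_e$. For type-X and type-Y (ladybug) configurations, each fiber has two elements; the 2-morphism in $F_o$ is determined by the unique sign-preserving bijection (as already exploited in \S\ref{subsec:def} to show $F_o$ is well-defined), while the 2-morphism in $F_e$ is the ladybug matching of \cite{lshomotopytype, lls1}. The remaining task reduces to checking that these two bijections agree on the underlying sets, a finite verification performed on a single standard ladybug configuration by directly comparing the two combinatorial conventions.

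The principal obstacle will be the ladybug step. The other cases are essentially formal once the supports of the even and odd differentials are seen to coincide as specializations of the unified differential, but the ladybug agreement rests on matching the sign conventions at ladybug faces from \cite{ors} with the ladybug matching convention of \cite{lshomotopytype} -- two a priori independent choices, which happen to select the same bijection on the nose, so that no sign reassignment in the sense of Definition \ref{def:signchange} is required.
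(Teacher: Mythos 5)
Your proposal takes essentially the same route as the paper: verify equality on objects, on 1-morphisms over edges, and on 2-morphisms over 2d faces via Lemma~\ref{lem:212}, with the ladybug configurations isolated as the only non-trivial 2-morphism check. The paper also carries out this final ladybug verification by a direct combinatorial computation (its Figure~\ref{fig:odd-to-even}), concluding that a type-X odd edge assignment yields precisely the right ladybug matching of~\cite{lshomotopytype}, while type-Y yields the left one; your phrase that the two conventions ``happen to select the same bijection'' slightly understates that the equality $\forgot F_o=F_e$ depends on pairing compatible conventions (type-X with right ladybug), a point the paper makes explicit.
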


\begin{proof}
  The generators of even and odd Khovanov homology are canonically
  identified, cf.~\S\ref{subsec:kg}, so on objects we have a canonical
  identification $\forgot F_{o}(u)=F_{e}(u)=\KhGen(u)$.  Similarly, since the
  differentials agree up to sign (for this identification), we have
  that $\forgot F_{o}(\phi_{u,v})$ is canonically identified with
  $F_{e}(\phi_{u,v})$ for $u\geqslant_1 v$.  So we just need to show
  that the $2$-morphism $\forgot F_{o}(\phi_{v,w}) \circ \forgot
  F_{o}(\phi_{u,v})\to\forgot F_{o}(\phi_{v',w})\circ \forgot
  F_{o}(\phi_{u,v'})$ agrees with the the $2$-morphism
  $F_e(\phi_{v,w}) \circ F_e(\phi_{u,v})\to F_e(\phi_{v',w})\circ
  F_e(\phi_{u,v'})$ for all 2d faces $u\geqslant_1 v,v'\geqslant_1 w$.

  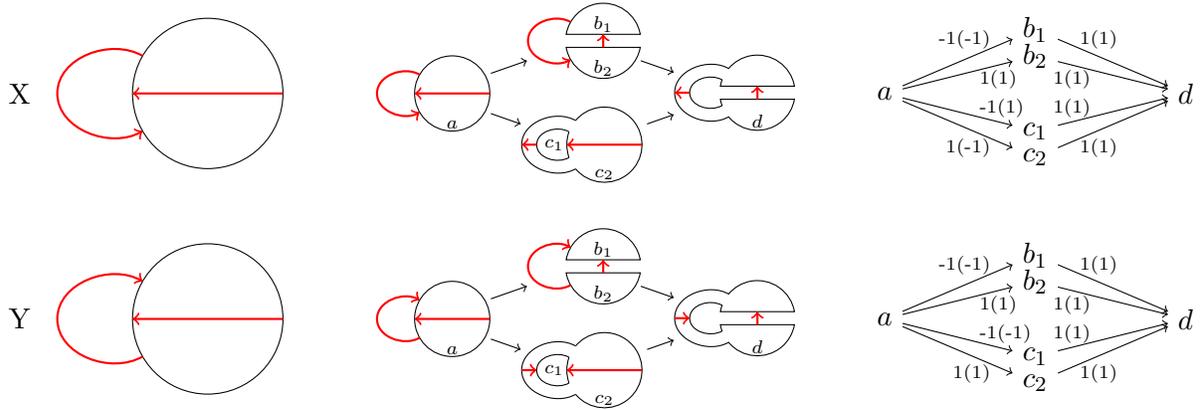
\begin{figure}
    \centering
    \begin{tikzpicture}
      \foreach \typ/\arr/\epone/\epmone [count=\c from 0] in {X/->/-1/1,Y/<-/1/-1}{
        \begin{scope}[yshift=-3*\c cm]
          \node at (-2.5,0) {\typ};
          
          \begin{scope}
            \draw (0,0) circle (1cm);
            \draw[thick,red,<-] (-1,0) --(1,0);
            \draw[thick,red,\arr] (150:1cm) to[out=150,in=90] (-2,0) to[out=-90,in=210] (210:1cm);
          \end{scope}
         
          \node[inner sep=0pt,outer sep=0pt] (a) at (3,0) {\begin{tikzpicture}[scale=0.5]
            \draw (0,0) circle (1cm);
            \draw[thick,red,<-] (-1,0) --(1,0);
            \draw[thick,red,\arr] (150:1cm) to[out=150,in=90] (-2,0) to[out=-90,in=210] (210:1cm);
            \node[anchor=south,inner sep=1pt] at (-90:1cm) {\tiny $a$};
          \end{tikzpicture}};
          
        \node[inner sep=0pt,outer sep=0pt] (b) at (5,0.7) {\begin{tikzpicture}[scale=0.5]
            \draw (10:1cm) arc (10:170:1cm) -- cycle;
            \draw (190:1cm) arc (190:350:1cm) -- cycle;
            \draw[thick,red,->] ($(190:1cm)!0.5!(350:1cm)$) -- ($(10:1cm)!0.5!(170:1cm)$);
            \draw[thick,red,\arr] (150:1cm) to[out=150,in=90] (-2,0) to[out=-90,in=210] (210:1cm);

            \node[anchor=south,inner sep=1pt] at ($(10:1cm)!0.5!(170:1cm)$) {\tiny $b_1$};
            \node[anchor=south,inner sep=1pt] at (-90:1cm) {\tiny $b_2$};
          \end{tikzpicture}};
          
          \node[inner sep=0pt,outer sep=0pt] (c) at (5,-0.7) {\begin{tikzpicture}[scale=0.5]
            \draw (160:1cm) arc (160:200:1cm);
            \draw (220:1cm) arc (220:360+140:1cm);
            \draw (160:1cm) to[out=150,in=90] (-2+0.2,0) to[out=-90,in=210] (200:1cm);
            \draw (140:1cm) to[out=150,in=90] (-2-0.2,0) to[out=-90,in=210] (220:1cm);
            \draw[thick,red,<-] (-1,0) --(1,0);
            \draw[thick,red,\arr] (-2+0.2,0) -- (-2-0.2,0);
            \node[anchor=east,inner sep=1pt] at (180:1cm) {\tiny $c_1$};
            \node[anchor=south,inner sep=1pt] at (-90:1cm) {\tiny $c_2$};
          \end{tikzpicture}};
          
        \node[inner sep=0pt,outer sep=0pt] (d) at (7,0) {\begin{tikzpicture}[scale=0.5]
            \draw (10:1cm) arc (10:140:1cm) to[out=150,in=90] (-2-0.2,0) to[out=-90,in=210] (220:1cm) arc (220:350:1cm) -- (190:1cm) arc (190:200:1cm) to[out=210,in=-90] (-2+0.2,0) to[out=90,in=150] (160:1cm) arc (160:170:1cm) -- cycle;
            \draw[thick,red,->] ($(190:1cm)!0.5!(350:1cm)$) -- ($(10:1cm)!0.5!(170:1cm)$);
            \draw[thick,red,\arr] (-2+0.2,0) -- (-2-0.2,0);
            \node[anchor=south,inner sep=1pt] at (-90:1cm) {\tiny $d$};
          \end{tikzpicture}};

          \draw[->] (a) edge (b) edge (c);
          \draw[<-] (d) edge (b) edge (c);
          
          \begin{scope}[xshift=6cm]
            
            \node (a) at (3,0) {$a$};
            \node (b) at (5,0.85) {$b_1$};
            \node (bb) at (5,0.5) {$b_2$};
            \node (c) at (5,-0.5) {$c_1$};
            \node (cc) at (5,-0.85) {$c_2$};
            \node (d) at (7,0) {$d$};

            \draw[->] (a) edge node[pos=0.8,inner sep=0,outer sep=0,anchor=south east] {\tiny -1(-1)} (b)  edge node[pos=0.7,inner sep=0,outer sep=0,anchor=north west] {\tiny 1(1)} (bb) edge node[pos=0.7,inner sep=0,outer sep=0,anchor=south west] {\tiny -1(\epmone)} (c) edge node[pos=0.8,inner sep=0,outer sep=0,anchor=north east] {\tiny 1(\epone)} (cc);

            \draw[<-] (d) edge node[pos=0.8,inner sep=0,outer sep=0,anchor=south west] {\tiny 1(1)} (b) edge node[pos=0.7,inner sep=0,outer sep=0,anchor=north east] {\tiny 1(1)} (bb) edge node[pos=0.7,inner sep=0,outer sep=0,anchor=south east] {\tiny 1(1)} (c) edge node[pos=0.8,inner sep=0,outer sep=0,anchor=north west] {\tiny 1(1)} (cc);

          \end{scope}

        \end{scope}
      }
    \end{tikzpicture}
    \caption{\textbf{The odd functor for the type-X assignment
        recovers the even functor for the right ladybug matching.}
      Consider the two types of ladybugs, X and Y, and name the
      circles appearing in the various resolutions
      $a,b_1,b_2,c_1,c_2,d$ as shown (their ordering does not
      matter). The coefficients of the relevant portion of the functor
      $\AbFunc_o$ (as well as those of the assignment $\AbFunc'_o$ in
      parentheses) are shown. Since we are considering a type-X
      assignment, $\AbFunc_o$ is chosen to differ from $\AbFunc'_o$ in
      one edge for the X-ladybug, and is chosen to agree with
      $\AbFunc'_o$ for the Y-ladybug. In either case, the unique
      sign-respecting $2$-isomorphism is the matching
      $(a,b_1,d)\leftrightarrow (a,c_1,d), (a,b_2,d)\leftrightarrow
      (a,c_2,d)$, which is the right ladybug matching
      from~\cite{lshomotopytype}.}\label{fig:odd-to-even}
  \end{figure}

  For 2d configurations apart from ladybugs, for any
  $x\in\KhGen(u),y\in\KhGen(w)$, the subset $s^{-1}(x)\cap t^{-1}(y)$
  in each of the correspondences contain at most one element, and so
  the two $2$-morphisms agree.  For ladybugs, one may check directly
  that the $2$-morphism for $\forgot F_{o}$ agrees with that for
  $F_{e}$.  To be more specific, the $2$-morphism for $\forgot F_{o}$
  specified by a type-X sign assignment agrees (using the above
  identifications of objects and $1$-morphisms) with the right ladybug
  matching for $F_{e}$ (a type-Y assignment corresponds to left
  ladybug matching), see Figure~\ref{fig:odd-to-even} for details.  By
  Lemma \ref{lem:212}, $\forgot F_{o}$ is isomorphic to $F_{e}$.
\end{proof}

Therefore, the even Khovanov spectrum from \cite{lshomotopytype} is
$\khoh(L)=\bigvee_j\khoh^{j}(L)$ with $\khoh^{
  j}(L)=\Realize{\Sigma^{-n_-}\forgot F^j_{o}}$. Using the
doubly-equivariant realizations, we have a related spectrum:
\begin{defn}\label{def:evenaction}
  We define a second even Khovanov spectrum, denoted 
  $\khoh'(L)=\bigvee_j\khoh^{\prime\,j}(L)$ as a $\ZZ_2$-equivariant finite CW
  spectrum, where $\khoh^{\prime\,j}(L)=\widehat{\Realize{\Sigma^{-n_-}F^j_{o}}}$, a
  doubly-equivariant realization of the stable signed Burnside functor
  $\Sigma^{-n_-}F^j_{o}$. 
\end{defn}

\begin{defn}\label{def:unifiedintro}
  We define the \emph{unified Khovanov spectrum}
  $\unis(L)=\bigvee_j\unis^j(L)$ as a $\ZZ^+_2\times\ZZ^-_2$-equivariant
  finite CW spectrum, where $\unis^j(L)$ is a realization of the
  stable $\ZZ_2$-equivariant Burnside functor
  $\Sigma^{-n_-}\oddeq F^j_{o}$.
\end{defn}

\begin{rmk}\label{rmk:khovanov-spaces-with-extra-action}
  Following Remark \ref{rmk:other-homotopy-types}, there is in fact a
  family of Khovanov spaces $\X_\ell(L)$, for $\ell\geq 0$, whose
  cellular chain complexes agree with the even Khovanov chain complex
  $\KhCx_e(L)$ if $\ell$ is even and the odd Khovanov chain complex
  $\oddKhCx(L)$ if $\ell$ even is odd. (We have already encountered
  $\X_0(L)=\khoh(L)$, $\X_1(L)=\khoo(L)$, and $\X_2(L)=\khoh'(L)$.)
  There is a natural generalization of Theorem \ref{thm:equivariance}
  to this family of spaces.  We conjecture that $\X_\ell(L)$, up to
  homotopy equivalence, only depends on the parity of $\ell$.
\end{rmk}
\subsection{Relations among the three theories}\label{sec:uni}
In this section, we find relations among the three Khovanov homotopy
types, in terms of geometric fixed points, geometric quotients, and
cofibration sequences.

\begin{proof}[Proof of Theorem~\ref{thm:equivariance}]
  The first statement and the first parts of statements
  (\ref{itm:thm-equivariance-2}) and (\ref{itm:thm-equivariance-4})
  follow from Corollary \ref{cor:burnlimwelldef-equivariant}.  The
  exact sequences are a consequence of the $\ZZ/2$-actions on
  $\khoh'(L)$ and $\khoo(L)$, for which $\Sigma^{-1}\khoo(L)$ and
  $\Sigma^{-1}\khoh(L)$ are respectively the fixed point sets, and
  $\unis(L)$ the quotients.  The inclusion of the fixed-point sets are
  cofibrations in both cases, giving the desired exact sequences.  The
  agreement with the exact sequences of \cite{putyrashumakovitch} at
  the level of cohomology is a consequence of
  (\ref{itm:thm-equivariance-3}) and
  (\ref{itm:thm-equivariance-5}). So it remains to prove
  (\ref{itm:thm-equivariance-3}) and
  (\ref{itm:thm-equivariance-5}). The proofs are similar, so let us
  only consider (\ref{itm:thm-equivariance-3}).

  Consider the Puppe sequence associated to the inclusion
  $\Sigma^{-1}\khoh(L) \into \khoo(L)$. For concreteness, assume
  $\khoo(L)$ has been constructed equivariantly using $k$-dimensional
  boxes, and all the sub-boxes of $[0,1]^k$ involved in the
  construction are of the form $[0,1]\times B$; that is, they extend
  the full length in the first coordinate. Let $X$ denote $\khoo(L)$,
  $Y$ denote the fixed set $\Sigma^{-1}\khoh(L)$, and $Z$ denote the
  quotient $X/Y=\unis(L)$. The Puppe sequence takes the form
  \[
  Y \into X \to X\cup C(Y)\xrightarrow{P}\Sigma Y,
  \]
  with $C$ denoting the cone, where the last map $P$ is quotienting by
  $X$.

  The term $X\cup C(Y)$ is homotopy-equivalent to
  $Z$ by quotienting by $C(Y)$:
  \[
  Q\from X\cup C(Y)\to X/Y =Z.
  \]
  So the Puppe map $Z\to\Sigma Y$ is the homotopy inverse of $Q$,
  composed with $P$.

  Meanwhile, we have the map $R \from \unis(L)=Z \to \khoh(L)=\Sigma
  Y$ given by quotienting by $\ZZ_2^+$. Recall that the
  $\ZZ_2^+$-action on $Z=X/Y$ is induced from the $\ZZ_2$-action on
  $X$. We wish to show that these two maps from $Z$ to $\Sigma Y$ are
  homotopic. Since $Q$ is a homotopy equivalence, it is enough to show
  that the two maps $P,R\circ Q\from X\cup C(Y)\to \Sigma Y$ are
  homotopic.

  Consider the quotient of $X$ by the $\ZZ_2$-action. Since $X$ has
  been constructed using boxes that stretch the full length along the
  first coordinate, it is not hard to see that the quotient is
  $C(Y)$. This produces a quotient map $S\from X\cup C(Y)\to C(Y)\cup
  C(Y)=\Sigma Y$.

  Both the maps $P$ and $R\circ Q$ factor through the above map
  $S$. The first map quotients the first $C(Y)$ factor, while the
  second map quotients the second factor. Either is homotopic to the
  identity map $C(Y)\cup C(Y)\to\Sigma Y$, and so the claim follows.
\end{proof}

\subsection{Invariance}\label{subsec:invar}
The main aim of this section is to prove that changes of
the orientation of the crossings, as well as Reidemeister moves,
result in equivariantly equivalent signed Burnside functors.

\begin{proof}[Proof of Theorem~\ref{thm:odd-functor-invariant}] We
  will now prove that the equivariant equivalence class of the odd
  Khovanov Burnside stable functor from
  Definition~\ref{def:kh-odd-burnside-functor} is independent of the
  choices in its construction, namely the choice of diagram
  $\diagram$, the orientation of the crossings, the edge assignment,
  and the ordering of the generators $a_i$ at each resolution.

We first see that for a fixed diagram $L$, changing the other
auxiliary choices results in sign reassignments (which are sometimes
isomorphisms) of functors from $\two^n$ to $\oddb$.

\begin{itemize}[leftmargin=*]
\item \textbf{Edge assignment:} Let $\epsilon,\epsilon'$ be two
  different edge assignments of the same type for the same oriented
  knot diagram $L$.  As noted in \cite[Lemma 2.2]{ors},
  $\epsilon\epsilon'$ is a (multiplicative) cochain in
  $\cellC^1([0,1]^n,\ZZ_2)$, and hence a coboundary of a $0$-cochain
  $\alpha$ on the cube of resolutions.  That is, there is a map
  $\alpha\from \two^n\to \{ \pm 1\}$, so that for any $v\geqslant_1w$
  $\alpha(v)\alpha(w)=\epsilon(\phi^\op_{w,v})\epsilon'(\phi^\op_{w,v})$.
  If $F_o$ and $F'_{o}$ are the corresponding functors
  $\two^n\to\oddb$, we get that $F'_o$ is obtained from $F_o$ by using
  the sign reassignment associated to $\alpha$.

\item \textbf{Orientations at crossings:} Recall that \cite[Lemma
  2.3]{ors} asserts that for oriented diagrams $(L,o)$ and $(L,o')$
  and an edge assignment $\epsilon$ for $(L,o)$, there exists an edge
  assignment of the same type $\epsilon'$ for $(L,o')$ so that
  $\oddKhCx(L,o,\epsilon)\cong \oddKhCx(L,o',\epsilon')$.  The
  isomorphism constructed in the lemma respects the Khovanov
  generators, and so induces an isomorphism of signed Burnside
  functors.  To be more specific, note that the Khovanov generators
  $\KhGen(L)$ of $\oddKhCx(L,o)$ are independent of the orientation
  $o$ (which only changes the differential).  Then the choice of edge
  assignment $\epsilon'$ is such that the identity morphism
  $\oddKhCx(L,o,\epsilon)\to \oddKhCx(L,o',\epsilon')$ commutes with
  the differentials.  Then the corresponding Burnside functors are
  also naturally isomorphic. (Independence of the orientations of
  crossings can also be proved using Reidemeister II moves twice, as
  in \cite[Figure~4.5]{SSS-geometric-perturb}.)

\item \textbf{Type of edge assignment:} \cite[Lemma 2.4]{ors} proves
  that an edge assignment $\epsilon$ of a decorated link diagram
  $(L,o)$ of type $X$ can also be viewed as a type $Y$ edge
  assignment for some orientation $o'$.  That is, the type-$X$
  Burnside functor associated to $(L,o,\epsilon)$ is already the
  type-$Y$ Burnside functor associated to
  $(L,o',\epsilon)$. (Independence of the type of edge assignment
  can also be achieved by Viro's trick of reflecting the knot diagram
  along the vertical line (which switches the $X$ and the $Y$
  ladybug), and then using a sequence of Reidemeister moves to come
  back to the original diagram, cf.~\cite[Proposition
  6.5]{lshomotopytype}.)

\item \textbf{Ordering of circles at each resolution:} Finally, we
  must check that reordering the circles of a resolution results in an
  equivariantly-equivalent signed Burnside functor.  For this, let
  $\KhGen(u)$ and $\KhGen'(u)$ denote the Khovanov generators for two
  differing orderings of the circles for a fixed link diagram.  These
  orderings are related by a bijection from $\KhGen(u)$ to
  $\KhGen'(u)$.  It is simple to check that these bijections relate
  the two functors $F_o,F'_o\from\two^n\to\oddb$ by a sign reassignment.
\end{itemize}

Next we move on to the main issue for proving well-definedness of the
stable equivalence class of the odd Khovanov Burnside functor:
Reidemeister moves.  For proving invariance under the Reidemeister
moves, the argument mostly follows the proof of \cite[Theorem
1]{lls2}, which lifts Khovanov's invariance proof \cite{kho1} to the
level of Burnside functors. For two diagrams differing by a
Reidemeister move, Khovanov's invariance proof---see also
Bar-Natan~\cite{natancat}---is built using chain maps between Khovanov
complexes, which are either subcomplex inclusions or quotient complex
projections that send Khovanov generators to Khovanov generators and
are chain homotopy equivalences, or their chain homotopy
inverses. Since these maps send Khovanov generators to Khovanov
generators, it is easy to see that the argument lifts to the Burnside
category level~\cite{lls2}; we similarly sketch how in the odd case,
the invariance proof from \cite{ors} lifts to the odd Burnside
functor. (The astute reader will observe that for the Reidemeister III
proof by Khovanov, the chain maps do not send Khovanov generators to
Khovanov generators. This issue is faced in \cite{lshomotopytype}
already.  It seems possible to carry through the approach of
\cite{kho1,natancat,ors}, but at the expense of considering functors
from categories other than cube categories.  However, we will instead
follow the proof of Reidemeister III invariance of
\cite{lshomotopytype} by considering the braid-like Reidemeister III.)

The standard way to prove Reidemeister invariance---applicable in the
even, odd, and unified theory---is the following. Start with the
Khovanov chain complex of one diagram, and perform a sequence of
replacements to arrive at the Khovanov chain complex of the other
diagram, where each replacement is either:
\begin{enumerate}[leftmargin=*,label=(c-\arabic*)]
\item\label{item:merge-cancel} \emph{Replacing the complex with a
    quotient complex associated to a merge.}  Namely, for a merge
  taking circles $a_1,a_2$ in $\diagram_0$ to $a$ in $\diagram_1$,
  there is an acyclic subcomplex spanned, at $\diagram_0$, by
  generators that do not contain $a_1$, and all generators at
  $\diagram_1$; replace by the quotient by this subcomplex.
\item\label{item:split-cancel} \emph{Replacing the complex with the
    subcomplex associated to a split.}  Namely, for a split taking one
  circle $a$ in $L_0$ to $a_1,a_2$ in $L_1$, there is a subcomplex
  spanned by all generators from $\diagram_1$ which do not contain an
  $a_1$ factor, and the corresponding quotient is acyclic; replace by
  this subcomplex.
\end{enumerate}
It is easy to check that the relevant maps---the quotient complex
projection in the first case and the subcomplex inclusion in the
second case---are chain homotopy equivalences in the unified theory
over $\ZZ_u$, and hence also in the odd and the even theory. (These
cancellations are parametrized by \emph{cancellation data} from
\cite[Definition 4.4]{SSS-geometric-perturb}.)

To lift this argument to the Burnside functor level, in the first
case, we will replace the functor by a sub-functor, and in the second
case, by a quotient functor from
\S\ref{sec:nat-transform-burn}. (Recall, the Khovanov complex is the
\emph{dual} of the totalization of the Burnside functor, hence
sub-functors correspond to quotient complexes and quotient functors
correspond to subcomplexes.)

\begin{itemize}[leftmargin=*]
\item \textbf{RI:} Consider the Reidemeister I move from a diagram $L=\begin{tikzpicture}[baseline={([yshift=-.8ex]current bounding
      box.center)}] \node[thick,inner sep=0,outer sep=0,draw,dotted] at (0,0)
    {\begin{tikzpicture}[scale=0.04]\draw[solid] (-2,-2)
        to[out=45,in=-45,looseness=2] (-2,12);\end{tikzpicture}};
\end{tikzpicture}$ to the diagram $L'=\begin{tikzpicture}[baseline={([yshift=-.8ex]current
    bounding box.center)}] \node[thick,inner sep=0,outer sep=0,draw,dotted] at (0,0) {\begin{tikzpicture}[scale=0.04]\draw[solid] (-2,12) to (8,2)
  to[out=-45,in=-90] (12,5) to[out=90,in=45] (8,8); \node[draw=none,crossing] at (5,5) {};
  \draw[solid] (-2,-2) to (8,8);\end{tikzpicture}};
\end{tikzpicture}$. We have that $\KhGen(\diagram')=\KhGen(\diagram'_0) \amalg \KhGen(\diagram'_1)$ where $L'_0=\begin{tikzpicture}[baseline={([yshift=-.8ex]current
    bounding box.center)}] \node[thick,inner sep=0,outer sep=0,draw,dotted] at (0,0) {\begin{tikzpicture}[scale=0.04]\draw[solid] (-2,-2)
        to[out=45,in=-45,looseness=2] (-2,12);\draw[solid] (8,8) to[out=-135,in=135,looseness=2] (8,2)
  to[out=-45,in=-90] (12,5) to[out=90,in=45] (8,8);\end{tikzpicture}};
\end{tikzpicture}$ (respectively, $L'_1=\begin{tikzpicture}[baseline={([yshift=-.8ex]current
    bounding box.center)}] \node[thick,inner sep=0,outer sep=0,draw,dotted] at (0,0) {\begin{tikzpicture}[scale=0.04]\draw[solid] (-2,12) to[out=-45,in=-135,looseness=1.5] (8,8)
  to[out=45,in=90] (12,5) to[out=-90,in=-45] (8,2) to[out=135,in=45,looseness=1.5] (-2,-2);\end{tikzpicture}};
\end{tikzpicture}$) is obtained from $L'$ by resolving the new
crossing by the $0$-resolution (respectively, $1$-resolution). Let $a$
be the new circle in $L'_0$, as shown in the picture.

We may perform a replacement of Type~\ref{item:merge-cancel} by
cancelling the subcomplex of $\uniKhCx(L')$ spanned by all the
generators in $\KhGen(L'_1)$ and only the generators in
$\KhGen(L'_0)$ that do not contain $a$, and after that we will be
left with a quotient complex that is naturally isomorphic to
$\uniKhCx(L)$. That is, we have a quotient complex projection
$\uniKhCx(L')\to\uniKhCx(L)$ that is a chain homotopy equivalence over
$\ZZ_u$. This is induced from a subfunctor inclusion
$F_o(L)\to F_o(L')$, that is, the dual map on the totalizations
\[
(\Tot(\oddeq\circ F_o(L')))^*\to(\Tot(\oddeq\circ F_o(L)))^*
\]
agrees with the map on the unified Khovanov complex, where
$\oddeq\from\oddb\to\eqb$ is the doubling functor from
Figure~\ref{fig:burnsidecategories}. Therefore, the functors $F_o(L)$
and $F_o(L')$ are equivariantly equivalent.

\item\textbf{RII:} The proof for Reidemeister II move is similar,
  except now we have to use both types of cancellations. Say we are
  doing a Reidemeister II from
  $L=\begin{tikzpicture}[baseline={([yshift=-.8ex]current bounding
      box.center)}] \node[thick,inner sep=0,outer sep=0,draw,dotted]
    at (0,0) {\begin{tikzpicture}[scale=0.04]\draw[solid] (-2,12)
        to[out=-45,in=-135] (22,12); \draw[solid] (-2,-2) to[out=45,in=135]
        (22,-2);\end{tikzpicture}};
\end{tikzpicture}$ to
  $L'=\begin{tikzpicture}[baseline={([yshift=-.8ex]current bounding
      box.center)}] \node[thick,inner sep=0,outer sep=0,draw,dotted]
    at (0,0) {\begin{tikzpicture}[scale=0.04]\draw[solid] (-2,12) to
        (8,2) to[out=-45,in=-135] (12,2) to (22,12);
        \node[draw=none,crossing] at (5,5)
        {};\node[draw=none,crossing] at (15,5) {}; \draw[solid]
        (-2,-2) to (8,8) to[out=45,in=135] (12,8) to
        (22,-2);\end{tikzpicture}};
  \end{tikzpicture}$. Once again, $\KhGen(L')$ decomposes as
  $\amalg_{(i,j)\in\{0,1\}^2}\KhGen(L'_{ij})$, where $L'_{ij}$ are the
  partial $(i,j)$ resolutions of $L'$ at the new crossings. Let $a$ be the new circle in $L'_{01}=\begin{tikzpicture}[baseline={([yshift=-.8ex]current bounding
      box.center)}] \node[thick,inner sep=0,outer sep=0,draw,dotted]
    at (0,0) {\begin{tikzpicture}[scale=0.04]\draw[solid] (-2,12) to[out=-45,in=45,looseness=2] (-2,-2); \draw[solid] (22,12) to[out=-135,in=135,looseness=2] (22,-2); \draw[solid] (12,8) to[out=135,in=45] (8,8) to[out=-135,in=135,looseness=2] (8,2) to[out=-45,in=-135] (12,2) to[out=45,in=-45,looseness=2] (12,8);\end{tikzpicture}};
  \end{tikzpicture}$.

  For the merge $L'_{01}\to L'_{11}$, we may cancel the subcomplex
  spanned by $\KhGen(L'_{11})$ and the generators in $\KhGen(L'_{01})$
  that do not contain $a$. The remaining quotient complex has an
  acyclic subcomplex corresponding to the split $L'_{00}\to L'_{01}$,
  spanned by $\KhGen(L'_{00})$ and the remaining generators in
  $\KhGen(L'_{01})$. This produces a chain homotopy equivalence
  between $\uniKhCx(L')$ and $\uniKhCx(L'_{10})$ (modulo shifting the
  homological grading by one), and the latter is naturally identified
  with $\uniKhCx(L)$. Since these subquotient complexes come from
  Burnside sub-functors and Burnside quotient functors, it is
  automatic that the two stable Burnside functors $F_o(L)=F_o(L_{10})$
  and $\Sigma^{-1}F_o(L')$ are equivariantly equivalent.

\item \textbf{RIII:} The proof of Reidemeister III invariance is
  exactly the same as the previous proof. As discussed earlier, we deviate
  from the standard proofs from \cite{kho1,natancat,ors}, but instead
  follow the proof from \cite[Proposition 6.4]{lshomotopytype}. Let
  $L'$ be obtained from $L$ by performing a braid-like Reidemeister
  III move, as in \cite[Figure 6.1c]{lshomotopytype}. Then in the
  six-dimensional partial cube of resolutions of $L'$, one can perform
  as sequence of cancellations---see \cite[Figure 6.4]{lshomotopytype}
  and the subsequent table---of Types~\ref{item:merge-cancel} and
  \ref{item:split-cancel} to produce a chain homotopy equivalence
  between $\uniKhCx(L')$ and $\uniKhCx(L'_{000111})$ (modulo shifting
  gradings by three), and the latter is naturally identified to
  $\uniKhCx(L)$. The subquotient complexes come from Burnside
  sub-functors and Burnside quotient functors, and once again it
  follows that the two stable Burnside functors $F_o(L)=F_o(L_{000111})$
  and $\Sigma^{-3}F_o(L')$ are equivariantly equivalent.
\end{itemize}
We leave it to the reader to convince themselves that the above
equivalences automatically respect the decomposition of the Burnside
functors according to quantum gradings.
\end{proof}

\begin{proof}[Proof of Theorem \ref{thm:oddkhmain}]
Recall that $\khoo(L)=| \Sigma^{-n_-}F_o|=(\CRealize{F_o}_k,-n_--k)$.  By Lemma \ref{lem:functor-map-gives-space-map}, $|\Sigma^{n_-}F_o|$ depends, up to (nonequivariant) stable homotopy equivalence, only on the stable equivalence class of $\Sigma^{n_-}F_o$.  Then by Theorem \ref{thm:odd-functor-invariant}, the stable homotopy class of $|\Sigma^{n_-}F_o|$ is an invariant of $L$.  Proposition \ref{prop:totalization} identifies the cellular chain complex of $|\Sigma^{n_-}F_o|$ as the totalization of $\Sigma^{-n_-}F_o$, which is the dual of the Khovanov complex (see discussion after Definition \ref{def:kh-odd-burnside-functor}), so Theorem \ref{thm:oddkhmain} follows (nonequivariantly).  To see that $\khoo(L)$ is well-defined up to equivariant stable homotopy equivalence, we use Lemma \ref{lem:functor-map-gives-space-map-equivariant} in place of Lemma \ref{lem:functor-map-gives-space-map}.  
\end{proof}

\begin{proof}[Proof of Theorem \ref{thm:evenintro}]
As with the proof of Theorem \ref{thm:oddkhmain}, we see that $\khoh'(L)$, up to equivariant stable homotopy equivalence, depends only on the equivariant equivalence class of $\Sigma^{n_-}F_o$, by Lemma \ref{lem:functor-map-gives-space-map-equivariant}.  Theorem \ref{thm:odd-functor-invariant} establishes that the equivariant equivalence class of $\Sigma^{n_-}F_o$ is a link invariant, and the theorem follows.
\end{proof}

\begin{proof}[Proof of Theorem \ref{thm:unifiedintro}]
The well-definedness follows as in Theorems \ref{thm:oddkhmain} and \ref{thm:evenintro} from Lemma \ref{lem:functor-map-gives-space-map-equivariant}.  For the CW description, we use Proposition \ref{prop:totalization-equivariant}, which establishes that the equality in Theorem \ref{thm:unifiedintro} is an isomorphism of $\ZZ_u$-modules.  The statement about the $\ZZ_2$-actions on the reduced cellular chain complex follows from the construction.
\end{proof}

\subsection{Reduced odd Khovanov homotopy type}\label{subsec:reduced}
We briefly address the reduced theory.  

Given a (generic) point $p$ on a link diagram $L$, there is a natural
sub-functor of $F^j_o(L)$ generated by only those Khovanov generators
that do not contain the circle $c_p$ containing $p$. Let
$\widetilde{F}^{j-1}_{o,+}(L,p)$ denote this subfunctor and
$\widetilde{F}^{j+1}_{o,-}(L,p)$ denote the corresponding quotient
functor.

Next we show that the two reduced functors $\widetilde{F}^j_{o,+}$ and
$\widetilde{F}^j_{o,-}$ are canonically identified.  Arranging for
convenience that the ordering of circles at each resolution has that
$c_p$---the circle containing $p$---is always last, we have a
canonical bijection between $\widetilde{F}_{o,-}$ and
$\widetilde{F}_{o,+}$.  This bijection is compatible with the
$1$-morphisms of the even Burnside functor; however, we must also check that these respect the sign map.  To be
specific, for $u\geqslant_1 v$, the bijection
$\widetilde{F}_{o,-}(\phi_{u,v})\to\widetilde{F}_{o,+}(\phi_{u,v})$
preserves all
signs cf.~\S\ref{sec:threekhfunc}, as the reader may check.  We will refer to either functor as
$\widetilde{F}_o$.

We would expect, based on what happens for odd Khovanov chain complex,
that the unreduced functor $F^j_{o}$ should be stably equivalent to
two copies of the reduced functor,
$\widetilde{F}^{j-1}_o\amalg\widetilde{F}^{j+1}_o$. However, the chain
level splitting from \cite{ors} does not generalize. Indeed, any such
stable equivalence cannot be a equivariant equivalence, cf.~Definition
\ref{def:stableq}, since by Proposition~\ref{prop:odd-recovers-even},
$\forgot F_o=F_e$ (and similarly,
$\forgot \widetilde{F}_o = \widetilde{F}_e$, where $\widetilde{F}_e$
is the reduced even Burnside functor), and the even Burnside functor
(and indeed, the even Khovanov chain complex) does not split as two
copies of its reduced version.

\begin{defn}\label{def:redkh}
  We define the \emph{reduced odd Khovanov spectrum}
  $\widetilde\khoo(L,p)=\bigvee_j\widetilde\khoo^j(L,p)$ as a
  $\ZZ_2$-equivariant finite CW spectrum, where
  $\widetilde\khoo^j(L,p)$ is a realization of the stable signed
  Burnside functor $\Sigma^{-n_-}\widetilde{F}^j_{o}$.
\end{defn}
\begin{defn}\label{def:reduced-unified}
 We define the \emph{reduced unified Khovanov spectrum}
  $\reduniKhspace(L,p)=\bigvee_j\reduniKhspace^j(L,p)$ as a
  $\ZZ_2\times \ZZ_2$-equivariant finite CW spectrum, where
  $\reduniKhspace^j(L,p)$ is a realization of the stable signed
  Burnside functor $\Sigma^{-n_-}\oddeq \widetilde{F}^j_{o}$.
\end{defn}

\begin{proof}[Proof of Theorem \ref{thm:redkh}] 
  Well-definedness of $\khor(L)$, up to equivariant stable homotopy equivalence, will follow from showing that $\Sigma^{-n_-}\widetilde{F}_o$ is well-defined up to equivariant equivalence, depending only on the isotopy class of $(L,p)$.  Isotopy invariance is immediate for
  Reidemeister moves away from the basepoint (using the maps induced by Reidemeister moves on $\Sigma^{-n_-}F_o$, and observing that they preserve $\Sigma^{-n_-}\widetilde{F}_o$).  As observed in
  \cite{Kho-kh-patterns}, any two diagrams for isotopic pointed
  links can be related by Reidemeister moves not crossing the
  basepoint and isotopies in $S^2$, from which well-definedness
  follows.  The cofibration sequence is a consequence of Lemma \ref{lem:cofib}, using the cofibration sequence of Burnside functors:
  \[
  \widetilde{F}^{j-1}_{o,+}(L,p)\to F^j_o(L) \to \widetilde{F}^{j+1}_{o,-}(L,p).
  \]
  Finally, the description of the reduced cellular cochain complex is a consequence of Proposition \ref{prop:totalization}, as in the proof of Theorem \ref{thm:oddkhmain}.
\end{proof}

\begin{prop}\label{prop:reduced-unified}
  The (stable) homotopy type of the reduced unified Khovanov spectrum
  $\khor(L,p)=\bigvee_j \khor^j(L,p)$ from
  Definition~\ref{def:reduced-unified} is independent of the choices in its
  construction and is an invariant of the isotopy class of the pointed
  link corresponding to $(L,p)$.  Its reduced cellular cochain complex
  agrees with the reduced unified Khovanov complex
  $\widetilde{\KhCx}_u(L)$,
  \[
  \redcellC^i(\reduniKhspace^j(L,p))= \widetilde{\KhCx}_u^{i,j}(L),
  \]
  with the cells mapping to the distinguished generators of
  $\widetilde{\KhCx}_u(L)$.  There is a cofibration sequence 
  \[\reduniKhspace^{j-1}(L,p)
  \to \unis^j(L)\to\reduniKhspace^{j+1}(L,p).\]
\end{prop}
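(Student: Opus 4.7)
The plan is to follow the template of the proofs of Theorems~\ref{thm:redkh} and~\ref{thm:unifiedintro}, pushing everything through the doubling 2-functor $\oddeq\from\oddb\to\eqb$. In the proof of Theorem~\ref{thm:redkh} we already verified that the reduced stable signed Burnside functor $\Sigma^{-n_-}\widetilde{F}^j_o$ is well-defined up to equivariant equivalence (Definition~\ref{def:stableq}) and is invariant, up to equivariant equivalence, under Reidemeister moves not crossing the basepoint. By the very definition of equivariant equivalence, applying $\oddeq$ converts an equivariant equivalence of functors to $\oddb$ into a stable equivalence of functors $\two^n\to\eqb$ (face inclusions are preserved since $\oddeq$ sends $\emptyset$ to $\emptyset$, and the relevant natural transformations become chain homotopy equivalences after $\Tot$). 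Lemma~\ref{lem:functor-map-gives-space-map-equivariant} then produces a $\ZZ_2^+\times\ZZ_2^-$-equivariant homotopy equivalence of realizations, which, combined with the observation of \cite{Kho-kh-patterns} that pointed isotopies are generated by such Reidemeister moves together with $S^2$-isotopies, establishes the independence-of-choices and pointed link invariance claims.

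For the identification of the reduced cellular cochain complex I would invoke Proposition~\ref{prop:totalization-equivariant}, which identifies $\redcellC^*(\reduniKhspace^j(L,p))$ with the totalization $\Tot(\Sigma^{-n_-}\oddeq\widetilde{F}^j_o)$ as a $\ZZ_u$-module, with cells corresponding to generators. Unwinding the definitions in \S\ref{sec:khovanovhomology} and \S\ref{subsec:kg}, this totalization is precisely the $\ZZ_u$-dual of the reduced unified Khovanov complex $\widetilde{\KhCx}_u^{*,j}(L,p)$, and the $\ZZ_2^+\times\ZZ_2^-$-action on the realization induces multiplication by $\xi$ and $-\xi$ on cellular cochains, exactly as in the proof of Theorem~\ref{thm:unifiedintro}.

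For the cofibration sequence, I would apply $\oddeq$ componentwise to the cofibration sequence of signed Burnside functors
\[
\widetilde{F}^{j-1}_{o,+}(L,p)\to F^j_o(L)\to \widetilde{F}^{j+1}_{o,-}(L,p)
\]
from the proof of Theorem~\ref{thm:redkh}. Since $\oddeq$ acts as $\{1,\xi\}\times(-)$ on objects and correspondences, it sends sub-functor inclusions to sub-functor inclusions and their complementary quotients to the corresponding complementary quotients in the sense of Definition~\ref{def:burn-cofib-sequence}, yielding a cofibration sequence of functors $\two^n\to\eqb$. The equivariant analogue of Lemma~\ref{lem:cofib} then produces the desired cofibration sequence of $\ZZ_2^+\times\ZZ_2^-$-equivariant spectra.

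The main (mild) obstacle is the bookkeeping check that $\oddeq$ is compatible with the sub-functor, quotient-functor, and cofibration-sequence constructions of \S\ref{sec:nat-transform-burn}, together with the verification that the $\ZZ_u$-module structure on the cellular cochain complex induced by the $\ZZ_2^+\times\ZZ_2^-$-action matches the one on $\widetilde{\KhCx}_u^{*,j}$; both checks are immediate from the componentwise definition of $\oddeq$ on objects and 1-morphisms.
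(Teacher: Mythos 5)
Your proposal is correct and follows the same strategy as the paper's proof: reuse the argument of Theorem~\ref{thm:redkh} through the doubling functor $\oddeq$, together with the observation that applying $\oddeq$ to a cofibration sequence of signed Burnside functors yields a cofibration sequence of $\eqb$-functors. The paper states this even more tersely (the only ``new'' check it singles out is exactly your observation about $\oddeq$ and cofibration sequences), and it already set up Lemma~\ref{lem:cofib} to cover $\burn_\bullet$ for $\bullet\in\{\varnothing,\sigma,\xi\}$, so no separate ``equivariant analogue'' is needed there; otherwise your unwinding matches the intended argument.
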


\begin{proof}
The proof of Theorem \ref{thm:redkh} goes through mostly unchanged.  The only new observation necessary is that the double of a cofibration sequence of Burnside functors is again a cofibration sequence.
\end{proof}

\subsection{Cobordism maps}\label{subsec:cobordism-maps}
For every smooth link cobordism $L\to L'$ embedded in
$\RR^3\times[0,1]$, there is an induced map on the even Khovanov
complex $\KhCx(L)\to\KhCx(L')$
\cite{jacobsson,natantangle,khinvartangle}, well-defined up to chain
homotopy and an overall sign. (The dependence on the overall sign can
be removed, see~\cite{cmw-disoriented}.)

\cite{lsrasmussen} lifted these maps to the even Khovanov homotopy
types, $\Khspace(L')\to\Khspace(L)$, so that the induced map on the
cellular cochain complex is the previous map, but did not check
well-definedness. It is fairly easy to check that the map
$\Khspace(L')\to\Khspace(L)$ defined in \cite{lsrasmussen} comes
from a map of the even Burnside functors $F_e(L')\to F_e(L)$, so
that the dual of the map on their totalizations is the map
$\KhCx(L)\to\KhCx(L')$.

In this section, we will further lift these to maps of the odd
Burnside functor $F_o(L')\to F_o(L)$, so that the even Burnside
functor map is obtained by applying the forgetful functor
$\forgot\from\oddb\to\burn$. In particular, we will get maps on the
odd Khovanov homotopy type, $\oddKhspace(L')\to\oddKhspace(L)$ and
the odd Khovanov complex, $\oddKhCx(L)\to\oddKhCx(L')$. We will not
check the well-definedness of any of these maps.

The standard way to define these maps is by decomposing the cobordism
as movie, which is a sequence of knot diagrams so that each one is
obtained from the previous one by a planar isotopy, Reidemeister move,
or a Morse critical point, which can be a birth, death, or a
saddle. In \S\ref{subsec:invar}, we have already constructed maps of
odd Burnside functors corresponding to the Reidemeister moves (which
were also equivariant equivalences). So we only need to construct maps
associated to the three Morse singularities.

First we consider the cup and cap cobordisms.  Let $L$ a link diagram,
and $L'=L\amalg U$, the disjoint union of $L$ and an unknot,
introducing no new crossings.  The elementary cobordism from $L$ to
$L'$ is called a \emph{cup} or a \emph{birth}, while that from $L'$ to
$L$ is a \emph{cap} or a \emph{death}.  We construct natural
transformations
\begin{align*}
\Phi_o^\cup & \from F_o(L') \to F_o(L) \\
\Phi_o^\cap & \from F_o(L) \to F_o(L'),
\end{align*}
decreasing quantum grading by $1$, lifting the natural transformations
\begin{align*}
\Phi_e^\cup & \from F_e(L') \to F_e(L) \\
\Phi_e^\cap & \from F_e(L) \to F_e(L')
\end{align*}
for the even Burnside functors from \cite{lsrasmussen}.

In each resolution of $L'$ there is a component corresponding to $U$.
We can write $\KhGen(L')=\KhGen(L)_+\amalg \KhGen(L)_-$ where
$\KhGen(L)_-$ (respectively, $\KhGen(L)_+$) is the subset of
generators in $\KhGen(L')$ which contain $U$ (respectively, do not
contain $U$); either is canonically identified with $\KhGen(L)$ by
ordering the circles at each resolution so that $U$ is
last. Let $F_o(L)_-$ (respectively,
$F_o(L)_+$) be the subfunctor of $F_o(L')$ generated by $\KhGen(L)_-$
(respectively, $\KhGen(L)_+$); either is isomorphic to $F_o(L)$,
modulo a quantum grading shift of $\pm 1$. Then
$F_o(L')=F_o(L)_-\amalg F_o(L)_+$, and so there is a subfunctor
inclusion $F_o(L)_-\to F_o(L')$ and quotient functor projection
$F_o(L')\to F_o(L)_+$.

We then set the cobordism maps according to:
\begin{align*}
\Phi_o^\cup &\from F_o(L')\to F_o(L)_+ \cong F_o(L)\\
\Phi_o^\cap &\from F_o(L) \cong F_o(L)_- \to F_o(L').
\end{align*}

Next, we handle the saddle case.  Let $L_0,L_1$ be $n$-crossing link
diagrams before and after the saddle, as in \cite[Figure
3.2]{lsrasmussen}, and let $F_e(L_0),F_e(L_1)\from\two^n\to\burn$ be
the two even Burnside functors (we have implicitly identified the
crossings in $L_0$ with the crossings in $L_1$).  Associated to the
saddle cobordism, \cite{lsrasmussen} constructs a natural
transformation $\Phi^s_e\from F_e(L_1) \to F_e(L_0)$ as follows. There
is a $(n+1)$-crossing diagram $L$ so that $L_i$ is the $i$-resolution
of $L$ at the new crossing, for $i=0,1$. Then
$\Phi^s_e\from\two^{n+1}\to\burn$ is simply defined to be
$F_e(L)\from\two^{n+1}\to\burn$---the even Burnside functor associated
to $L$.  One easily checks that the natural transformation increases
the quantum grading by $1$.

The generalization to the signed Burnside functor version is
immediate, and we obtain a natural transformation
$\Phi^s_o\from F^s_o(L_1) \to F^s_o(L_0)$.

\begin{lem}\label{lem:main-cobordism}
  Associated to a (movie presentation of a) link cobordism
  $L\to L'$, the map on the odd Burnside functors
  $\Phi_o\from F^{j+\chi(S)}_o(L')\to F^j_o(L)$ lifts the map on the even
  Burnside functors $\Phi_e\from F^{j+\chi(S)}_e(L')\to F^j_e(L)$ that is
  (implicitly) constructed in \cite{lsrasmussen}:
  \[
    \forgot\Phi_o= \Phi_e.
  \]
  In particular, the induced map on the $\ZZ_2$ chain complex,
  $\KhCx^{*,j}(L;\ZZ_2)\to\KhCx^{*,j+\chi(S)}(L';\ZZ_2)$, agrees with
  the Khovanov map (up to chain homotopy).
\end{lem}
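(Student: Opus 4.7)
The plan is to decompose the cobordism $L \to L'$ into a movie—a sequence of planar isotopies, Reidemeister moves, and elementary Morse cobordisms (births, deaths, saddles)—and verify $\forgot \Phi_o = \Phi_e$ on each elementary piece. Because $\forgot\from\oddb\to\burn$ is a $2$-functor that commutes with sub-functor inclusions, quotient-functor projections, face inclusions, and stable grading shifts, elementwise compatibility propagates to the composition $\Phi_o$ through the sequence of natural transformations and stable equivalences that constitute a map of Burnside functors (Definition~\ref{def:map-of-burnside-functors}). The base identification is provided by Proposition~\ref{prop:odd-recovers-even}, which gives $\forgot \circ F_o = F_e$ at each vertex of the movie.

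For Reidemeister moves, the invariance proof of \S\ref{subsec:invar} constructs the odd equivalence as a zig-zag of sub-functor inclusions and quotient-functor projections coming from cancellations of Types~\ref{item:merge-cancel} and \ref{item:split-cancel}. These cancellations are indexed solely by which Khovanov generators get removed---data intrinsic to the underlying correspondence and independent of the sign map---so applying $\forgot$ yields the analogous zig-zag of sub- and quotient-functor maps for $F_e$, which is precisely the even Reidemeister equivalence used in \cite{lls2, lsrasmussen}. For a birth cobordism $L \to L \amalg U$, the odd map is the quotient projection $F_o(L \amalg U) \to F_o(L)_+ \cong F_o(L)$ from the coproduct splitting $F_o(L \amalg U) = F_o(L)_- \amalg F_o(L)_+$ determined by whether a Khovanov generator contains the circle $U$; this splitting is visibly preserved by $\forgot$, recovering the even cup map, and the death case is dual. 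For a saddle, $\Phi_o^s$ is by definition $F_o$ applied to the $(n{+}1)$-crossing diagram $L$ whose $0$- and $1$-resolutions at the new crossing are $L_0$ and $L_1$; by Proposition~\ref{prop:odd-recovers-even}, $\forgot$ of this is $F_e(L)$, i.e. exactly $\Phi_e^s$.

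For the mod-$2$ statement, observe that $\KhCx^{*,j}_o(L; \ZZ_2) = \KhCx^{*,j}_e(L; \ZZ_2) = \KhCx^{*,j}(L; \ZZ_2)$, since odd and even Khovanov differ only in signs that vanish mod $2$. The chain map on $\KhCx(L; \ZZ_2)$ induced by $\Phi_o$ factors through $\forgot \Phi_o = \Phi_e$ and therefore coincides with the one induced by $\Phi_e$. Since $\Phi_e$ is constructed in \cite{lsrasmussen} so as to realize, on the chain complex, the Khovanov cobordism map of \cite{jacobsson, khinvartangle, natantangle} up to chain homotopy, the same holds for the mod-$2$ reduction of $\Phi_o$.

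The main obstacle I anticipate is purely one of bookkeeping: matching the cancellation data, edge assignments, and per-vertex orderings of circles used in \S\ref{subsec:invar} for $\Phi_o$ against the implicit choices in \cite{lsrasmussen} for $\Phi_e$, most delicately for Reidemeister~III where the braid-like proof runs through a six-dimensional cube of cancellations. This is, however, absorbed by the mod-$2$ chain-homotopy statement, since the chain homotopy class of the $\ZZ_2$-Khovanov cobordism map is unique by the classical movie-move invariance, so only the existence of \emph{some} even lift $\Phi_e$ satisfying $\forgot \Phi_o = \Phi_e$ is actually needed.
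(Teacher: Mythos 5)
Your proposal is correct and takes essentially the same approach as the paper, which simply states that the claim is immediate from the definitions of the Reidemeister, cup, cap, and saddle maps; you are filling in the same per-elementary-piece verification the paper leaves implicit (using Proposition~\ref{prop:odd-recovers-even} for the base case and the compatibility of $\forgot$ with sub/quotient functors and coproducts). Your closing remark that the mod-$2$ statement is robust against bookkeeping mismatches with the implicit choices in \cite{lsrasmussen} is a reasonable observation, though the paper treats the equality $\forgot\Phi_o=\Phi_e$ as holding on the nose by construction.
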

\begin{proof}
  This is immediate from the definitions for each of the maps---the
  Reideister invariance maps, as well as the cups, caps, and the
  saddles.
\end{proof}

Let $\SteenrodAlg$ denote the mod-2 Steenrod algebra. Let $\asig$
denote the free product of two copies of $\SteenrodAlg$. It acts on
$\Kh(L;\ZZ_2)$ as follows: the first (respectively, second) copy acts by
viewing $\Kh(L;\ZZ_2)$ as the mod-2 cohomology of $\khoh(L)$ (respectively,
$\khoo(L)$).

\begin{proof}[Proof of Theorem \ref{thm:cobordism-maps}]
  We follow the proof of \cite[Corollary 5]{lsrasmussen}. Let $S\from
  L\to L'$ be the link cobordism and
  $\Kh_S\from\Kh(L;\ZZ_2)\to\Kh(L';\ZZ_2)$ the induced map.  By
  Lemma~\ref{lem:main-cobordism}, $\Kh_S$ comes from a map of Burnside
  functors $\Phi^S_o\from F_o(L')\to F_o(L)$.

  The even and odd realizations give two actions of $\SteenrodAlg$ on
  the mod-2 Khovanov homology, and, in particular, by naturality of
  Steenrod operations on either the odd or even spatial realization,
  we have a commutative diagram
  \[
  \begin{tikzpicture}[xscale=5,yscale=1.5]
    \node (a0) at (0,0) {$\Kh^{i,j}(L;\f)$};
    \node (a1) at (1,0) {$\Kh^{i+n,j}(L;\f)$};
    \node (b0) at (0,-1) {$\Kh^{i,j+\chi(S)}(L';\f)$};
    \node (b1) at (1,-1) {$\Kh^{i+n,j+\chi(S)}(L';\f)$};
    
    \draw[->] (a0) -- (a1) node[pos=0.5,anchor=south] {\scriptsize $\alpha$};
    \draw[->] (a0) -- (b0) node[pos=0.5,anchor=east] {\scriptsize $\Kh_S$};
    \draw[->] (b0) -- (b1) node[pos=0.5,anchor=south] {\scriptsize $\alpha$};
    \draw[->] (a1) -- (b1) node[pos=0.5,anchor=west] {\scriptsize $\Kh_S$};
    
  \end{tikzpicture}
  \]
  for $\alpha$ a stable cohomology operation of degree $n$ coming from
  either copy of $\SteenrodAlg$.  It is then clear that the diagram
  then also commutes for $\alpha$ a linear combination or composition
  of elements of the two copies of $\SteenrodAlg$.  The theorem
  follows directly from these diagrams.
\end{proof}

\subsection{Concordance invariants}\label{sec:conc}
Theorem~\ref{thm:cobordism-maps} allows one to define knot concordance
invariants, once again borrowing arguments directly from
\cite{lsrasmussen}. In this section, we work only with knots, not
links.

For a knot diagram $K$ and field $\f$, there is a spectral sequence
with $E^2$-page Khovanov homology $\Kh^{i,j}(K;\f)$ converging to
$\f^2$ coming from a descending filtration $\Filt$ of the Khovanov
chain complex $\KhCx(K)$ so that
\[
\KhCx^{*,j}(K;\f)=\Filt_j/\Filt_{j+2}.
\]
This was defined by Lee \cite{lee} for fields not of characteristic
$2$, and for all fields by Bar-Natan \cite{natantangle}, see also
\cite{turner,naot}; from now on, fix our field as $\f=\f_2$, the field
with two elements, and we work their variant.

The Rasmussen $s$ invariant of $K$ from \cite{rasmus-s},
cf.~\cite{lsrasmussen}, is defined by
\begin{align*}
s^\f(K)&= \max \{ q\in 2\ZZ+1 \mid H^*(\Filt_q)\to H^*(\Filt_{-\infty}) \cong \f^2 \text{ surjective } \} +1,\\
&= \max\{ q\in 2\ZZ+1 \mid  H^*(\Filt_q)\to H^*(\Filt_{-\infty}) \cong \f^2 \text{ nonzero } \} -1, 
\end{align*}

\begin{defn}\label{def:full}
  Fix $\alpha\in\asig$ of grading $n>0$. Call $q
  \in 2\ZZ+1$ $\alpha$-\emph{half full} if there exist elements
  $\widetilde{a} \in \Kh^{-n,q}(K;\f)$, $\widehat{a} \in \Kh^{0,q}(K;\f)$,
  $a\in H^0(\Filt_q ; \f)$, $\bar{a}\in H^0(\Filt_{-\infty};\f)$ such that
\begin{enumerate}[leftmargin=*]
\item\label{itm:full1} the map $\alpha \from \Kh^{-n,q}(K;\f) \to
  \Kh^{0,q}(K,\f)$ from Theorem~\ref{thm:cobordism-maps} sends $\widetilde{a}$ to
  $\widehat{a}$.
\item The map $H^0(\Filt_q;\f)\to \Kh^{0,q}(K;\f)=H^0(\Filt_q/\Filt_{q+2};\f)$ sends $a$ to $\widehat{a}$.
\item\label{itm:full3} The map $H^0(\Filt_q;\f)\to H^0(\Filt_{-\infty};\f)$ sends $a$ to $\bar{a}$.  
\item $\bar{a}\in H^0(\Filt_{-\infty};\f)=\f\oplus \f$ is nonzero.
\end{enumerate} 
Call $q$ $\alpha$-\emph{full} if there exists tuples
$(\widetilde{a},\widehat{a},a,\bar{a})$ and
$(\widetilde{b},\widehat{b},b,\bar{b})$ as above, with properties
(\ref{itm:full1})--(\ref{itm:full3}), and so that $\bar{a},\bar{b}$ is
a basis of $H^0(\Filt_{-\infty};\f)$.
\end{defn}

\begin{defn}\label{defn:concinv}
  For a knot $K$, define $r^{\alpha}_+(K)=\max \{ q \in 2\ZZ+1 \mid q
  \; \text{is }\alpha\text{-half-full} \}+1$, and
  $s^{\alpha}_+(K)=\max \{ q \in 2\ZZ+1 \mid q \; \text{is }
  \alpha\text{-full} \}+3$.  If $m(K)$ is the mirror of $K$, let
  $r^{\alpha}_-(K)=-r^{\alpha}_+(m(K))$ and
  $s^{\alpha}_-(K)=-s^{\alpha}_+(m(K))$.
\end{defn}

\begin{thm}\label{thm:conc}
  Let $\alpha \in \asig$ and $S$ a connected, embedded cobordism in
  $\RR^3\times [0,1]$ from $K$ to $K'$ of genus $g$.  Then
  \begin{align*}
    | r^{\alpha}_{\pm}(K)-r^{\alpha}_{\pm}(K') | &\leq 2g\\
    | s^{\alpha}_{\pm}(K)-s^{\alpha}_{\pm}(K') | &\leq 2g.
  \end{align*}
  In particular, $|r^{\alpha}_{\pm}(K)|/2,|s^{\alpha}_{\pm}(K)|/2$ are
  concordance invariants and lower bounds for the slice genus $g_4(K)$.
\end{thm}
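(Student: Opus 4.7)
The plan is to adapt the argument of \cite[\S4]{lsrasmussen}, where analogous inequalities are established using a single copy of the Steenrod algebra, by replacing it with the richer action of $\asig$ provided by Theorem~\ref{thm:cobordism-maps}. Let $S\from K\to K'$ be a connected cobordism of genus $g$, so $\chi(S)=-2g$. By Theorem~\ref{thm:cobordism-maps}, $S$ induces an $\asig$-linear map $\Kh_S\from \Kh^{\ast,\ast}(K;\f)\to \Kh^{\ast,\ast-2g}(K';\f)$. As in \cite{lsrasmussen}, this lifts to a filtered chain map on Bar-Natan's perturbation of the Khovanov complex shifting quantum filtration by $-2g$, so it induces compatible maps on $H^\ast(\Filt_q;\f)$, on the subquotients $\Kh^{\ast,j}(K;\f)=H^\ast(\Filt_j/\Filt_{j+2};\f)$, and on $H^\ast(\Filt_{-\infty};\f)\cong \f\oplus \f$. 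The critical classical input is that, because $S$ is \emph{connected}, $\Kh_S$ is nondegenerate on $H^0(\Filt_{-\infty};\f)$; over $\f_2$ this follows from Bar-Natan's analogue of Lee's theorem, cf.~\cite{rasmus-s,natantangle}, and is the same fact underlying the concordance invariance of $s^\f$.

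Given these ingredients, fix a tuple $(\widetilde{a},\widehat{a},a,\bar{a})$ witnessing that $q$ is $\alpha$-half-full for $K$ in the sense of Definition~\ref{def:full}. The tuple $(\Kh_S(\widetilde{a}),\Kh_S(\widehat{a}),\Kh_S(a),\Kh_S(\bar{a}))$ lives at quantum grading $q-2g$ for $K'$. Conditions~(\ref{itm:full1})--(\ref{itm:full3}) are preserved: (\ref{itm:full1}) by the $\asig$-linearity of $\Kh_S$, and the remaining two by its naturality with respect to the filtration's structure maps. The nonvanishing condition (4) at $K'$ is guaranteed by the nondegeneracy above. Hence $q-2g$ is $\alpha$-half-full for $K'$, giving $r^\alpha_+(K')\geq r^\alpha_+(K)-2g$; applying the same argument to the reverse cobordism $\bar{S}\from K'\to K$ yields the opposite inequality, and together $|r^\alpha_+(K)-r^\alpha_+(K')|\leq 2g$. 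The bound for $r^\alpha_-$ follows by mirroring. The $s^\alpha_\pm$ statement is identical, now applied to both witnesses $(\widetilde{a},\widehat{a},a,\bar{a})$ and $(\widetilde{b},\widehat{b},b,\bar{b})$; the requirement that $\Kh_S(\bar{a})$ and $\Kh_S(\bar{b})$ form a basis of $H^0(\Filt_{-\infty};\f)$ at $K'$ uses that for connected cobordisms between knots the induced map on $E^\infty\cong \f^2$ is in fact an isomorphism.

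Concordance invariance is the special case $g=0$. For the slice genus bound, apply the inequalities to a minimal-genus cobordism from $K$ to the unknot; one computes $r^\alpha_\pm=s^\alpha_\pm=0$ on the unknot directly (the Khovanov homology of the unknot is concentrated in homological degree $0$, so condition~(\ref{itm:full1}) forces $\widetilde{a}=\widehat{a}=0$, and a small case-check on the Bar-Natan complex rules out the remaining possibilities).

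The main technical obstacle is the filtered $\asig$-linearity of $\Kh_S$, but this is essentially automatic from our setup: both copies of $\SteenrodAlg$ inside $\asig$ act on $\Kh(L;\f_2)$ via Steenrod operations on the cellular cochain complexes of $\khoh^j(L)$ and $\khoo^j(L)$, and the cobordism maps of \S\ref{subsec:cobordism-maps} are induced by maps of the respective spectra that respect the quantum-grading decomposition up to the shift by $\chi(S)$. Naturality of Steenrod operations with respect to these spectrum maps then delivers commutativity with both copies simultaneously, exactly as in \cite[Corollary~5]{lsrasmussen} for a single copy.
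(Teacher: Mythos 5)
Your proof is correct and takes essentially the same approach as the paper, which simply appeals to Theorem~\ref{thm:cobordism-maps} and defers the details to the argument of \cite[Theorem 1]{lsrasmussen}; you have written out what that argument actually does. The only place where you are slightly terser than ideal is the unknot computation: for $q=-1$ the tuple $a=1$, $\widehat a = 0$, $\bar a = 1$ does witness $\alpha$-half-fullness (and $q=-3$ is $\alpha$-full), so the ``case-check'' does not rule out all $q$ but rather identifies the maximal ones, yielding $r^\alpha_\pm(U)=s^\alpha_\pm(U)=0$ as you state.
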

\begin{proof}
  This follows from Theorem~\ref{thm:cobordism-maps}, arguing as in
  \cite[Theorem 1]{lsrasmussen}.
\end{proof}

\subsection{Questions}
We conclude with some structural questions about the odd Khovanov space:

\begin{enumerate}[leftmargin=*,label=(q-\arabic*)]
\item In \S\ref{sec:conc} we constructed concordance invariants using
  the action of the mod-$2$ Steenrod algebra on
  $\widetilde{H}^*(\khoo(L);\ZZ_2)$.  It is natural to ask for
  concordance invariants defined from homology using different
  coefficient fields.  Indeed, \cite{lsrasmussen} defines such
  invariants using stable cohomology operations with any coefficient
  field.  For this, perhaps one needs an analogue of the Lee spectral
  sequence for odd Khovanov homology.
\item\label{item:reduced-2copies} Ozsv\'ath-Rasmussen-Szab\'o~\cite{ors}
  showed that $\kho^{*,j}(L)=\kor^{*,j-1}(L)\oplus \kor^{*,j+1}(L)$
  for any link $L$.  Is it the case that $\khoo^j(L) \simeq
  \khor^{j-1}(L)\vee \khor^{j+1}(L)$? More specifically, is there a
  stable equivalence between the signed Burnside functors $F^j_o(L)$
  and $\widetilde{F}^{j-1}_o(L)\amalg\widetilde{F}^{j+1}_o(L)$? (Such
  an equivalence has to be non-equivariant.)
\item So far, calculations of the odd Khovanov homotopy type are
  limited.  Is it always a wedge sum of Moore spaces? 
  Do there exist links $L$ for which $\khoo(L)$ is not a wedge sum of
  smash products of Moore spaces?
\item In \cite{putyrashumakovitch} there are short exact sequences:
  \begin{equation*}
    \KhCx_e(L) \to \unic(L) \to \oddKhCx(L) \qquad \text{ and } \qquad \oddKhCx(L) \to \unic(L) \to \KhCx_e(L) 
  \end{equation*}
  At the level of cohomology, these exact sequences are induced from
  the cofibration sequences in Theorem \ref{thm:equivariance}.
  However, the maps in that proposition were not cellular for the
  coarse CW structure.  That leads to the question: are there CW
  cofibration sequences
  \begin{equation*}
    \khoo(L) \to \unis(L) \to \khoh(L) \qquad \text{ and } \qquad \khoh(L) \to \unis(L) \to \khoo(L) 
  \end{equation*}
  (with respect to the coarse CW structure) inducing the maps of
  \cite{putyrashumakovitch} on cellular chain complexes?
\item One of the applications of the technology of \cite{lls1} was to
  show
  \[
  \khoh(m(L))\simeq \khoh(L)^\vee
  \]
  where $m(L)$ is the mirror of $L$, and $\vee$ denotes the
  Spanier-Whitehead dual.  We conjecture, similarly, that
  $\khoo(m(L))\simeq \khoo(L)^\vee$.  The proof of the statement in
  even Khovanov homology involved the TQFT structure of even Khovanov
  homology, and does not immediately generalize to odd Khovanov
  homology.
\item It would be desirable to understand the behavior of the odd
  Khovanov spectra for disjoint unions and connected sums.  Is it
  possible to express the (equivariant) homotopy type $\khoo(L_1
  \amalg L_2)$ in terms of the (equivariant) odd Khovanov spectra of
  $L_1,L_2$?  (In the even theory, it is merely the smash product.)
  Is it possible to express the odd and unified (unreduced) Khovanov
  spectra of $L_1\#L_2$ in terms of the spectra of the component
  links?  For even Khovanov homotopy, this was dealt with as
  \cite[Theorem 8]{lls1}: $\khoh(L_1\# L_2)$ is the derived tensor
  product of $\khoh(L_i)$ over the even Khovanov spectrum of the
  unknot.
\item Is the old even Khovanov spectrum $\khoh(L)$ stable homotopy
  equivalent to the new even Khovanov spectrum $\khoh'(L)$? More
  generally, does the Khovanov spectrum $\X_\ell(L)$ from
  Remark~\ref{rmk:khovanov-spaces-with-extra-action} depend only on
  the parity of $\ell$?
\end{enumerate}

\bibliographystyle{amsalpha}
\bibliography{oddkh}
\vspace{0.5in}%for better page break

\end{document}